\newif\ifdraft

\drafttrue

\ifdraft

\documentclass[11pt, reqno]{amsart}
\usepackage{lmodern}

\setlength{\parskip}{.2\baselineskip}

\usepackage[a4paper, margin=1in]{geometry}

\else

\documentclass[reqno]{amsart}
\usepackage{lmodern}

\usepackage[a4paper, margin=.75in]{geometry}
\fi

\usepackage{amsmath, amsthm, thmtools, amsfonts, amssymb, mathtools}
\usepackage{pdflscape, blkarray, multirow, booktabs}
\usepackage{mathrsfs}

\usepackage{amstext} 
\usepackage{array}   
\newcolumntype{L}{>{$}l<{$}} 

\usepackage[dvipsnames]{xcolor}
\usepackage{hyperref}
\hypersetup{
	colorlinks = true,
	linkcolor = {Blue},
	citecolor={BrickRed},
}

\usepackage{enumerate}
\usepackage{makecell}
\usepackage{graphicx}
\usepackage{float}

\newcommand{\defeq}{\vcentcolon=}

\newcommand{\tk}[1][4]{T_{#1}(K)}
\newcommand{\avec}[1]{\vec{\mathbf{a}}_{\{#1\}}}
\newcommand{\avect}[1]{\vec{\mathbf{a}}_{#1}}

\newcommand{\lambdak}{\mu_{i_1\ldots i_k}}
\newcommand{\mip}{multi-index $p$-inductive polynomial}

\theoremstyle{definition}
\newtheorem{con}{Conjecture}

\newtheorem{ques}{Question}
\newtheorem{thm}{Theorem}[section]
\newtheorem*{thmIntro}{Theorem}

\newtheorem{defn}[thm]{Definition}
\newtheorem{lemma}[thm]{Lemma}
\newtheorem{example}[thm]{Example}

\newtheorem{proposition}[thm]{Proposition}

\newtheorem*{conjecture}{Conjecture}

\newcommand{\ord}[1][p]{\mathrm{ord}(#1)}
\newcommand{\diag}{\mathrm{diag}}
\newcommand{\D}{\ensuremath{\mathcal{D}}}
\newcommand{\ch}{\textbf{char}}

\newcommand{\nocontentsline}[3]{}
\newcommand{\tocless}[2]{\bgroup\let\addcontentsline=\nocontentsline#1{#2}\egroup}
\definecolor{newBlue}{HTML}{8611e8}
\newcommand{\hf}{\hspace{0.2cm}}


\newcommand\scalemath[2]{\scalebox{#1}{\mbox{\ensuremath{\displaystyle #2}}}}

\begin{document}
\allowdisplaybreaks
\title[The image of polynomials and Waring type problems]{The image of polynomials and Waring type problems on upper triangular matrix algebras}

\author[S. Panja]{Saikat Panja}
\address{Department of Mathematics, IISER Pune \\ Maharashtra, India}
\email{panjasaikat300@gmail.com}
\author[S. Prasad]{Sachchidanand Prasad}
\address{Department of Mathematics and Statistics, IISER Kolkata \\ West Bengal, India}
\email{sp17rs038@iiserkol.ac.in}

\subjclass[2010]{16S50, 16R10}

\keywords{Lvov-Kaplansky conjecture, polynomial maps, upper triangular matrix algebra, word maps, upper triangular matrix group, Zariski topology, Waring type Problem}

\begin{abstract}
    Let $p$ be a polynomial in non-commutative variables $x_1,x_2,\ldots,x_n$ with constant term zero over an algebraically closed field $K$. The object of study in this paper is the image of this kind of polynomial over the algebra of upper triangular matrices $T_m(K)$. We introduce a family of polynomials called multi-index $p$-inductive polynomials for a given polynomial $p$. Using this family we will show that, if $p$ is a polynomial identity of $T_t(K)$ but not of $T_{t+1}(K)$, then $p \left(T_m(K)\right)\subseteq T_m(K)^{(t-1)}$. Equality is achieved in the case $t=1,~m-1$ and an example has been provided to show that equality does not hold in general. We further prove existence of $d$ such that each element of $T_m(K)^{(t-1)}$ can be written as sum of $d$ many elements of $p\left( T_m(K) \right)$. It has also been shown that the image of $T_m(K)^\times$ under a word map is Zariski dense in $T_m(K)^\times$.
\end{abstract}

\date{\today}
\maketitle

\setcounter{tocdepth}{3}

\frenchspacing 

\tableofcontents
\section{Introduction} \label{sec:introduction}
There has been growing interest in polynomial equation (and word equation) in associative algebras (and groups), in recent years.
This line of research has an origin in the work of two great mathematicians of the last century, namely Armand Borel (for group-theoretic questions) and Irving Kaplansky (for ring theoretic questions). 
We should keep in mind that the results for groups differ substantially from those concerning algebras, mainly because of the existence of non-trivial identities (polynomial identities, more precisely) for simple matrix algebras, which can not occur for simple matrix groups due to Tit's alternative. 
There is a plethora of tools used to solve these problems in the case of algebras, mostly coming from algebraic geometry, and arithmetic properties of polynomial equations in associative algebras,
whereas people have been using algebraic groups, character theory, and deep results 
from the classification of finite group theory to handle these problems in case of groups. Let us start with a brief introduction to both of the problems in the subsequent discussion.

Let $F_n$ denotes the free group on $n$ symbols $x_1,x_2,\ldots,x_n$ and $K$ be an algebraically closed field.
The elements of the group ring $K[F_n]$ will be called as
\textit{polynomials in non-commutative variables}. Given an algebra $\mathcal{A}$ over $K$ and $p\in K[F_n]$, consider the following map
\begin{align*}
    \Phi_p:\mathcal{A}^n\longrightarrow \mathcal{A},\quad (a_1,a_2,\ldots,a_n)\mapsto p(a_1,a_2,\ldots,a_n).
\end{align*}
These are known as \textit{polynomial maps}  on $\mathcal{A}$. Obviously, we need the polynomial to be in non-commutative variables as the algebra need not be commutative. Images of polynomials are crucial to answer different questions of non-commutative algebra. A problem due to Kaplansky \cite{Ka57} was settled down after the construction of central polynomials 
(i.e., a polynomial $f$ such that $f(\mathcal{A})$ lies inside $Z(\mathcal{A})$) by Formanek \cite{Fo72}. There are questions regarding the image of polynomial maps on an associative algebra, for example, $M_n(K)$, motivated by several questions raised by Kaplansky. 
Note that the image can vary substantially depending upon the polynomial. For example, it can be a field of scalars when the polynomial is multilinear central (e.g. multilinearization of the polynomial of $[x_1,x_2]^2$ is a central polynomial of the smallest possible degree for $M_2(K)$), 
or take the case $f(x_1,x_2)=x_1x_2-x_2x_1$, where the image is $[M_n(K),M_n(K)]$, the set of all trace $0$ matrices (this is due to a theorem of Albert and Muckenhoupt \cite{AlMu57}). 
Determining the image is difficult, as the image need not be a vector space (at least not in an obvious way), although it is a cone invariant under matrix conjugation. At this point let us digress a little, to give a glimpse of the growth of the problem in the context of group theory.

The parallel study in group theory is concerned with the maps corresponding to an element of $F_n$, known as \textit{word maps}. This problem is motivated by the
classical Waring problem in number theory
which asks whether every positive integer can be written as $g(k)$ many $k$-th powers for a suitable function $g$. For a prime $q$, to investigate 
the image of a word map in pro-$q$-groups, it is important to know the image of Lie polynomials over the algebra of $n\times n$ matrices. Indeed, this has helped Zelmanov \cite{Ze05} to prove that for 
$q\gg n$, the free pro-$q$-group can not be embedded into the algebra of $n\times n$ matrices. It has been proved that for semisimple algebraic group $G$ and any word $w\in F_n$,
the map $w:G^n\longrightarrow G$ is dominant, by Borel \cite{Bo83} and Larsen \cite{La04} independently. This has further motivated mathematicians to investigate word maps on finite simple groups of Lie type.
 A recent breakthrough in this direction is the affirmation of
 Ore's conjecture (which states that the commutator map corresponding to the word $xyx^{-1}y^{-1}\in F_2$ is surjective in the case of finite non-abelian simple groups), by Liebeck, O'Brien,
 Shalev and Tiep \cite{LiObSh10}, using the methods from character theory. 
 Indeed the authors prove that if $G$ is any quasisimple classical group over a finite field, 
 then every element of $G$ is a commutator, using character-theoretic results due to 
 Frobenius. Later they proved results about the product of squares in finite non-abelian 
 simple group in \cite{LiObSh12}.
 They proved that 
 every element of a non-abelian finite simple group $G$ is a product of two squares. For a survey of these problems in the context of group theory, we refer the reader to the excellent article \cite{Sh09} due to Shalev and a survey article \cite{LaShTi11} due to Larsen, Shalev and Tiep.

Coming back to the case of associative algebras, let us start by stating the famous  Kaplansky-Lvov conjecture which is concerned with the image of multilinear maps in non-commutative variables over the algebra $M_n(K)$ of $n\times n$ matrices. It states the following;
 \begin{con}
 Let $p$ be a multilinear polynomial in $m$ variables, over a field $k$. Consider the image set of $M_n(k)$ under the
 polynomial map. Then $\textup{Im}\,\Phi_p$ is either $\{0\}$, $\{aI:a\in k\}$, $\mathfrak{sl}_n(k)$ or $M_n(k)$, where $\mathfrak{sl}_n(k)$ denotes the set of all
 trace $0$ matrices in $M_n(k)$.
 \end{con}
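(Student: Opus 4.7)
The plan is to exploit the natural symmetries of the image set $V := \mathrm{Im}\,\Phi_p \subseteq M_n(k)$. For any $g \in GL_n(k)$ and any polynomial $p$ in non-commutative variables, $g\,p(a_1,\ldots,a_m)g^{-1} = p(ga_1g^{-1},\ldots,ga_mg^{-1})$, so $V$ is invariant under simultaneous conjugation. Multilinearity of $p$ additionally gives $\lambda\,p(a_1,\ldots,a_m) = p(\lambda a_1, a_2, \ldots, a_m)$, so $V$ is a cone. Together these put $V$ on the same footing as a $GL_n(k)$-stable cone in the adjoint representation $M_n(k)$.

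Granting the hardest step, namely that $V$ is a $k$-linear subspace, the rest reduces to representation theory. Assuming $\mathrm{char}(k)\nmid n$, the decomposition $M_n(k) = k\cdot I \oplus \mathfrak{sl}_n(k)$ has both summands irreducible under the adjoint $GL_n$-action, so the $GL_n$-invariant subspaces of $M_n(k)$ are exactly $\{0\}$, $k\cdot I$, $\mathfrak{sl}_n(k)$, and $M_n(k)$, matching the four options in the conjecture. The trace is a $GL_n$-equivariant linear functional on $V$, so either $V \subseteq \mathfrak{sl}_n(k)$ or $V$ contains a matrix of nonzero trace; this, combined with whether $V$ contains $I$, distinguishes the four cases.

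The main obstacle is precisely showing closure under addition. Given $p(\vec{a}) + p(\vec{b})$, there is no obvious tuple $\vec{c}$ producing this sum: a block-diagonal construction yields an element of $M_{2n}(k)$ rather than $M_n(k)$, and no associative combining trick is available. A reasonable first attempt would be to pass to the Zariski closure $\overline{V}$, which is an irreducible $GL_n$-invariant cone (irreducibility coming from that of the domain $M_n(k)^m$); this at least cuts $\overline{V}$ down to an orbit closure, after which one would try to establish density of $V$ in $\overline{V}$ and then equality, via specialization and deformation arguments in the spirit of the Albert--Muckenhoupt proof for $p = x_1x_2 - x_2x_1$.

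I should stress that this is the Kaplansky--Lvov conjecture itself and is open for $n \geq 3$; the symmetry/representation theory framework above reduces it to the linearity question but does not resolve it, and any genuine proof must contribute a new input (as in the $n=2$ analysis of Kanel-Belov, Malev, and Rowen). The excerpt accordingly records the statement as a conjecture rather than a theorem.
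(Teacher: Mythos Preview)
The paper does not prove this statement; it is recorded there as an open conjecture (Conjecture~1, the Kaplansky--Lvov conjecture), with the remark that it ``stands open for $n\geq 2$'' and that only the case $n=2$ over quadratically closed fields has been fully settled. So there is no proof in the paper to compare your attempt against.

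You correctly recognize this, and your write-up is really an outline of the standard reduction rather than a proof: conjugation-invariance and the cone property are immediate, and the representation-theoretic step---that the only $GL_n$-stable subspaces of $M_n(k)$ are $\{0\}$, $k\cdot I$, $\mathfrak{sl}_n(k)$, $M_n(k)$ when $\mathrm{char}(k)\nmid n$---is well known. The entire content of the conjecture is the step you flag as the ``main obstacle'': closure under addition. Your Zariski-closure sketch does not close this gap (irreducibility of $\overline{V}$ does not by itself force $\overline{V}$ to be linear, and density of $V$ in $\overline{V}$ is also not automatic), and you say as much. So your proposal is an honest non-proof that matches the paper's treatment: the statement is open and is presented as such.

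One small correction: you write ``open for $n\geq 3$''; the paper's more careful formulation is that it is open for $n\geq 2$ in general, with the $n=2$ case resolved only over quadratically closed fields.
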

 At the time of writing this paper the conjecture stands open for $n\geq 2$. The case $n=2$ has been solved for quadratically closed fields in \cite{KaMaRo12} by Kanel-Belov, Malev, and Rowen and for $n=3$  substantial progress has been made in \cite{KaMaRo16} by the same authors. One of the lemma in \cite{KaMaRo12} (essentially a corollary of a theorem of Amitsur) is that a generic matrix algebra with characteristic coefficients adjoined 
 does not contain zero divisors. This enables to study of eigenvalues of elements of the image of the polynomial. But, unfortunately, this method of comparing eigenvalues starts to break down for larger $n$. This necessitates the use of more intrinsic machinery of algebraic geometry for the case of $3\times3$ matrices.  
Some partial results are known due to \cite{KaMaRo16_survey}, in case $n=4$. For survey of these results in case of algebras, we suggest the reader the beautiful article \cite{KaMaRoYa20} by Kanel-Belov, Malev, Rowen, and Yavich. Since over an algebraically closed field, any matrix can be conjugated to an upper triangular matrix, variations of the Kaplansky-Lvov conjecture are being studied for upper triangular matrix algebras $\tk[m]$,  in recent times. The following conjecture was posed in 2019 by Fagundes and de Mello \cite{FaMe19}.
 \begin{con}
 The images of a multilinear polynomial in non-commutative variables over a field $K$, on the algebra $T_m(K)$  is a vector space.
 \end{con}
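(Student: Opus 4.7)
The multilinearity of $p$ immediately gives closure of $p(T_m(K))$ under scalar multiplication, since $\lambda p(a_1,\ldots,a_n) = p(\lambda a_1, a_2, \ldots, a_n)$, so the real content of the conjecture is closure under addition. My starting point would be the upper bound $p(T_m(K)) \subseteq T_m(K)^{(t-1)}$ announced in the abstract, where $t$ is the smallest integer such that $p$ is not an identity of $T_{t+1}(K)$. The plan is then to identify a specific subspace $V$ with $p(T_m(K)) \subseteq V \subseteq T_m(K)^{(t-1)}$ and prove both inclusions, the upward inclusion using the multi-index $p$-inductive polynomials to produce representatives of a basis of $V$ modulo the lower filtration layers.

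For closure under addition, I would first attempt a split-the-supports strategy. Given $b_1 = p(\vec{a})$ and $b_2 = p(\vec{c})$, I would try to use conjugation invariance of the image under $T_m(K)^\times$ to arrange the two tuples so that every mixed monomial in the multilinear expansion of $p(\vec{a}+\vec{c})$ vanishes; multilinearity would then yield $p(\vec{a}+\vec{c}) = b_1 + b_2$. In parallel, an induction on $m$ via the natural quotient $T_m(K) \to T_{m-1}(K)$ (deleting the first row and column) should let one reduce the addition problem modulo the top row to an addition problem on a smaller triangular algebra; combining this reduction with the multi-index $p$-inductive construction, one hopes to adjust the tuples so as to correct the top-row discrepancy in a controlled way.

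The main obstacle is that the split-the-supports approach is too optimistic: conjugation by invertible upper triangular matrices preserves the standard flag, so one cannot freely move an image element into an arbitrary block of $T_m(K)$, unlike in the $M_n(K)$ setting. Consequently, the cross monomials in $p(\vec{a}+\vec{c})$ do not vanish in general and must be absorbed by a genuinely nontrivial cancellation. Devising a systematic mechanism for this cancellation --- one that works uniformly in $p$ rather than by case-analysis on the shape of $p$ --- is, in my view, precisely what makes this conjecture delicate; my best guess is that the multi-index $p$-inductive polynomials should be iterated, producing tuples whose cross-term contributions telescope across the filtration of $T_m(K)$, but making this precise is the crux of any proof.
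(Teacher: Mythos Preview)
The paper does not contain its own proof of this statement. It appears in the introduction as the Fagundes--de Mello conjecture, and the paper immediately records that it was settled independently by Luo--Wang and by Gargate--de Mello, quoting their theorems without reproducing the arguments. The paper's own results concern \emph{arbitrary} (not multilinear) polynomials, where the analogous equality can fail (see \autoref{eg:counterExample}).

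Your proposal is a strategy outline rather than a proof, and you yourself flag its central gap: the split-the-supports idea does not survive the rigidity of the triangular flag, and you have no concrete mechanism to cancel the cross-terms in $p(\vec a + \vec c)$. More to the point, your framing --- proving closure under addition abstractly --- is not how the conjecture was resolved. Both cited proofs establish the stronger fact that $p(T_m(K)) = T_m(K)^{(t-1)}$ exactly (equivalently $J^t$, with $J$ the Jacobson radical and $t$ the commutator degree of $p$); the image is a vector space because it \emph{equals} a specific known subspace. The decisive step is the surjectivity $p(T_m(K)) \supseteq T_m(K)^{(t-1)}$, and it is precisely here that multilinearity does real work, giving enough independence among the induced polynomials on diagonal tuples to reach every element of $T_m(K)^{(t-1)}$ --- freedom that is lost for general polynomials, as the paper's counterexample demonstrates. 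Your passing remark about ``identify a specific subspace $V$ and prove both inclusions, with $V = T_m(K)^{(t-1)}$'' was in fact the correct route; the detour through abstract additivity and telescoping cross-terms is both unnecessary and, as you suspected, a dead end.
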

 \noindent This conjecture has been settled recently by Luo, Wang in \cite{LuWa22} and Gargate, de Mello in \cite{GaMe_unpub} independently. The results are as follows.

 \begin{thmIntro}[Luo, Wang]
     Let $m\ge 1$ and $n\ge 2$  be two integers. Let $K$  be an infinite field and $p \left(x_1,\ldots,x_m\right)$ be a nonzero multilinear polynomial in non-commutative variables over $K$. Suppose that $|K|> \frac{n(n-1)}{2}$. We have that $p \left(T_n(K)\right)=T_n(K)^{(k)}$ for some integer $-1\le k\leq \left[\frac{m}{2}\right]-1$, where $\left[\frac{m}{2}\right]$ is the integer part of $\frac{m}{2}$.
 \end{thmIntro}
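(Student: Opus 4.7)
The plan is to identify the correct $k$ from the polynomial-identity behaviour of $p$. Let $t \ge 0$ be the largest integer such that $p$ is an identity of $\tk[t]$ (with $t = 0$ if $p$ is not even an identity of $K$), and set $k \defeq t-1$. Establishing $p(\tk[n]) = \tk[n]^{(k)}$ then splits into two inclusions, together with the numerical bound $k \le \lfloor m/2\rfloor - 1$.

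The bound and the forward inclusion $p(\tk[n]) \subseteq \tk[n]^{(k)}$ come relatively cheaply. A classical theorem of Latyshev states that the minimum degree of a multilinear identity of $\tk[t]$ is $2t$; since $p \ne 0$ has degree $m$ and is an identity of $\tk[t]$, this gives $m \ge 2t$, hence $k \le \lfloor m/2\rfloor - 1$. For the inclusion, one uses that the T-ideal of identities of $\tk[t]$ is generated by the standard product of commutators $c_t \defeq [x_1,x_2][x_3,x_4]\cdots[x_{2t-1},x_{2t}]$; since $p$ lies in this T-ideal and each commutator of upper triangular matrices is strictly upper triangular, any evaluation of $c_t$ on $\tk[n]$ lands in $\tk[n]^{(t-1)}$, and hence so does any evaluation of $p$. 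This is essentially the role played by the multi-index $p$-inductive polynomial machinery introduced earlier in the paper.

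The reverse containment $\tk[n]^{(k)} \subseteq p(\tk[n])$ is the harder direction and I would proceed in three stages. First, since $p$ is not an identity of $\tk[t+1]$, there exist $B_1,\ldots,B_m \in \tk[t+1]$ with $p(B_1,\ldots,B_m) \ne 0$; as this evaluation lives in the one-dimensional space $\tk[t+1]^{(t-1)} = K \cdot \e{1,t+1}$, it is a nonzero scalar multiple of $\e{1,t+1}$. Embedding $\tk[t+1]$ as the principal $(t+1)\times(t+1)$ block on rows $i, i+1, \ldots, i+t$ of $\tk[n]$ produces nonzero multiples of every $\e{i,i+t}$ in the image; analogous embeddings for $s > t$ give multiples of each $\e{i,i+s}$. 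Second, conjugating all inputs simultaneously by an invertible diagonal matrix $D = \diag(d_1,\ldots,d_n)$ rescales the $(i,j)$-entry of $p(A_1,\ldots,A_m)$ by $d_i/d_j$, so the image is stable under this operation. Third, by choosing sufficiently many diagonal matrices $D$ and inverting the resulting Vandermonde-type system in the variables $d_i/d_j$, one isolates each superdiagonal contribution and concludes that every element of $\tk[n]^{(k)}$ lies in $p(\tk[n])$.

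The main obstacle is this last step: disentangling the simultaneous contributions to different superdiagonals produced by a single evaluation of $p$. Showing that $p(\tk[n])$ is even closed under addition -- the content of the Fagundes--de Mello conjecture -- is exactly what this Vandermonde inversion provides. The hypothesis $|K| > n(n-1)/2$ enters precisely here, guaranteeing enough distinct scalars in $K$ (one per off-diagonal position) for the relevant Vandermonde matrix to be invertible.
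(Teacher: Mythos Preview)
This theorem is not proved in the present paper; it is quoted from Luo and Wang \cite{LuWa22} as background for the Fagundes--de~Mello conjecture. So there is no proof in the paper to compare against, and your proposal must be judged on its own merits.

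Your forward inclusion and the degree bound are fine. The reverse inclusion, however, has a genuine gap in the third stage. Conjugation by diagonal matrices and a Vandermonde inversion will show that the \emph{$K$-linear span} of $p(\tk[n])$ equals $\tk[n]^{(k)}$: from a single evaluation you produce many diagonal conjugates, and Vandermonde lets you write any target as a $K$-linear combination of these. But that is strictly weaker than $p(\tk[n]) = \tk[n]^{(k)}$, because you have not shown the image is closed under addition. Your final sentence acknowledges this is exactly the Fagundes--de~Mello conjecture, yet asserts the Vandermonde step ``provides'' it; it does not. Inverting a linear system expresses a target as a \emph{sum} of images, not as a \emph{single} image. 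Concretely, a single diagonal conjugation rescales the $(i,j)$ entry by $d_i/d_j$, and these ratios satisfy multiplicative relations (e.g.\ $d_1/d_3 = (d_1/d_2)(d_2/d_3)$), so you cannot in general hit a prescribed matrix by conjugating one fixed image.

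What is actually needed --- and what Luo--Wang and Gargate--de~Mello do, and what this paper does in its own Propositions~\ref{thm:order_m-1} and~\ref{thm:order_1} for the non-multilinear case --- is an explicit construction of a preimage for each $X \in \tk[n]^{(k)}$. In the multilinear setting one exploits multilinearity directly: the evaluation decomposes as a sum over placements of strictly upper-triangular versus diagonal parts of the inputs, and one engineers the inputs entry by entry so that the contributions assemble to the target $X$. The cardinality hypothesis $|K| > n(n-1)/2$ is used there to guarantee enough distinct diagonal parameters for this construction, not for a Vandermonde inversion on the output side.
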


 \begin{thmIntro}[Gargate, de Mello]
     Let $K$ be an infinite field and $f\in K \left[F_n\right]$ be a multilinear polynomial. Then the image of $f$  on $T_m(K)$ is $J^r$ if and only if $f$ has commutator degree $r$ where $J$ is the Jacobson radical of $T_m(K)$.
 \end{thmIntro}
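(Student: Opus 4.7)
The plan is to prove the biconditional by establishing the forward direction (commutator degree $r$ implies image equals $J^r$) in two inclusions, and then deriving the converse from the forward direction applied to nearby degrees. I write $f = \sum_{\sigma \in S_n} c_\sigma x_{\sigma(1)} \cdots x_{\sigma(n)}$ with at least one $c_\sigma \neq 0$, and recall that $J$ consists of the strictly upper triangular matrices, so $J^r$ is spanned by the matrix units $e_{i,j}$ with $j - i \geq r$.

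For the easy inclusion $f(T_m(K)) \subseteq J^r$, I would express $f$ (having commutator degree $r$) as a $K$-linear combination of monomials each of the form $w_0 [u_1, v_1] w_1 [u_2, v_2] w_2 \cdots [u_r, v_r] w_r$, where the $u_i, v_i, w_j$ are monomials in the $x_k$. Evaluated on any tuple in $T_m(K)^n$, each commutator $[u_i(A), v_i(A)]$ has zero diagonal and hence lies in $J$; since $J$ is a two-sided ideal, the whole monomial lies in $J^r$, and summing preserves $J^r$.

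The harder inclusion $J^r \subseteq f(T_m(K))$ is the main step. For each pair $i, j$ with $j - i \geq r$, I construct a tuple $(A_1, \ldots, A_n)$ with $f(A_1, \ldots, A_n) = \alpha\, e_{i,j}$ for some $\alpha \in K^\times$. The idea is to substitute matrix units $A_k = \lambda_k e_{p_{k-1}, p_k}$ (with scalar parameters $\lambda_k$ to be varied) along a strictly increasing path $i = p_0 < p_1 < \cdots < p_n = j$, chosen so that in the expansion $\sum_\sigma c_\sigma A_{\sigma(1)} \cdots A_{\sigma(n)}$ only one permutation $\sigma_0$ produces a composable chain of matrix units and therefore a nonzero summand, giving $c_{\sigma_0} (\prod_k \lambda_k)\, e_{i,j}$. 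The hypothesis that $f$ has commutator degree exactly $r$ is what guarantees the existence of such an $(i,j)$ at distance exactly $r$ with $c_{\sigma_0} \neq 0$; for larger gaps $j - i > r$ one pads the path with diagonal matrix units. To upgrade ``every $e_{i,j}$ is individually attainable'' to ``every linear combination of them is attainable'', I would run a Vandermonde argument in the parameters $\lambda_k$ simultaneously over all pairs $(i,j)$, exploiting that $K$ is infinite; this is the same philosophy as the \mip\ device that the authors of the present paper develop in later sections.

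For the converse, suppose $f(T_m(K)) = J^r$. If the commutator degree of $f$ were strictly less than $r$, the forward direction applied to the true degree would produce an evaluation lying outside $J^r$, contradicting the hypothesis; and if it were strictly greater than $r$, the image would be contained in $J^{r+1} \subsetneq J^r$, again contradicting equality. Hence the commutator degree is exactly $r$. The main obstacle throughout is the reverse inclusion: the image of $f$ is not a priori closed under addition, so assembling the pointwise constructions of the $e_{i,j}$ into all of $J^r$ rests entirely on the Vandermonde/infinite-field step being carried out with enough parameters to reach arbitrary coefficients, and on verifying that the surviving coefficient $c_{\sigma_0}$ is genuinely nonzero, which is delicate when the commutator degree exceeds one.
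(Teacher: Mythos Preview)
This theorem is not proved in the present paper: it is quoted in the introduction as a result of Gargate and de Mello \cite{GaMe_unpub}, alongside the parallel theorem of Luo and Wang. The paper offers no proof of its own for this statement, so there is nothing to compare your proposal against here.

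That said, a brief comment on your sketch. The easy inclusion and the converse argument are fine. In the hard inclusion, however, your path-of-matrix-units construction with $A_k=\lambda_k e_{p_{k-1},p_k}$ along a strictly increasing chain forces the \emph{identity} permutation to be the unique survivor, so the surviving coefficient is $c_{\mathrm{id}}$, not an arbitrary $c_{\sigma_0}$; commutator degree $r$ does not by itself guarantee $c_{\mathrm{id}}\neq 0$. Once you pad with diagonal matrix units to handle $j-i>r$ with $n>j-i$, several permutations survive simultaneously and you must control a genuine linear combination of the $c_\sigma$. Finally, the step from ``each $e_{i,j}$ is attainable'' to ``every element of $J^r$ is attainable'' is the crux: the image is not a priori additive, and a single Vandermonde in the scalars $\lambda_k$ will not produce arbitrary off-diagonal patterns --- one needs a more careful simultaneous choice of substitutions (this is exactly what Gargate--de Mello, and in a different guise the \mip\ machinery of the present paper, are designed to handle). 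Your outline identifies the right skeleton but leaves precisely the delicate step unfilled.
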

 
 In this paper, we consider the problem of finding the images of non-commutative polynomials evaluated on upper triangular matrix algebras.
\begin{ques}\label{ques:Question}
     Let $p$ be a polynomial in non-commutative variables, satisfying $p(\mathbf{0})=0$, over an algebraically closed field $K$. Consider the algebra of upper triangular matrices over $K$, to be denoted by $T_m(K)$. Then what are the possible images $p(T_m(K))$?
\end{ques}
\noindent This question has been answered completely in case of $n=2,~ \ord \ge 1$ (see \autoref{defn:order} for the definition of $\ord$), by Chen, Luo and Wang \cite{WaZhLu21} and $n=3,~ \ord \ge 1$ \cite{WaZhLu22}. Furthermore, the properties of the image have been proved for $\ord = 0$.
The results are as follows.
\begin{thmIntro}[Theorem 3.1, \cite{WaZhLu21}]
Let $p(x_1,\ldots, x_n)$ be a polynomial with zero constant term in non-commutative
variables over an algebraically closed field $K$. Then one of the following
statements holds:
\begin{enumerate}
    \item If $\ord\geq 2$ then $p(T_2(K))=0$;
    \item If $\ord=1$ the $p(T_2(K))=T_2(K)^{(0)}$;
    \item If $\ord=0$ then 
    \begin{align*}
        p(T_2(K))=T_2(K)\setminus\left\{\begin{pmatrix}c&a\\&c\end{pmatrix}:c\in K\setminus S(K),a\in K^\times\right\},    
    \end{align*}
    where $S(K)$ denotes the set of strange points (for definition check \cite[Definition 2.1]{WaZhLu21}) of $K$,
    is a Zariski dense subset of $T_2(K)$. Suppose further that
$\ch(K)=0$  or $\ch(K)>d$, where $d$ is the degree of $p$. We have that $|K\setminus S(K)|<d$.
\end{enumerate}
\end{thmIntro}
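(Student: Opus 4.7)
The plan is to exploit the block structure of $T_2(K)$ to separate the diagonal behaviour of $p$ from the off-diagonal one. Writing $X_i = D_i + N_i$ with $D_i = \diag(a_i, c_i)$ and $N_i = b_i\, e_{12}$, the relation $N_i N_j = 0$ forces all monomial contributions with two or more of the $N_j$'s to vanish, leaving
$$p(X_1,\ldots,X_n) \;=\; \diag\bigl(\bar p(\mathbf{a}),\, \bar p(\mathbf{c})\bigr) \;+\; M(\mathbf{a},\mathbf{b},\mathbf{c})\, e_{12},$$
where $\bar p$ is the abelianization of $p$ and $M$ is a polynomial which is linear in each $b_j$. The image problem thus reduces to two coupled sub-questions: the range of $\bar p$ on the diagonal and the range of $M$ off the diagonal.

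Cases (1) and (2) now follow almost formally. For (1), $\ord \geq 2$ says $p$ is a PI of $T_2(K)$, giving $p(T_2(K)) = 0$. For (2), $\ord = 1$ means $p$ is a PI of $K$ but not of $T_2(K)$; since $K$ is infinite, the former forces $\bar p \equiv 0$ as a polynomial and hence $p(T_2(K)) \subseteq T_2(K)^{(0)}$, while the latter says $M$ is a nonzero polynomial in $3n$ commuting variables, and over an algebraically closed field every non-constant polynomial map $K^{3n} \to K$ is surjective (apply the Nullstellensatz to the hypersurface $M = b$), yielding the reverse inclusion.

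For case (3), $\bar p$ is nonzero, so the same surjectivity argument gives $\bar p(K^n) = K$, and every diagonal pair $(\alpha, \gamma) \in K\times K$ is realised. The core device is a conjugation trick: for $t \in K$ and $P_t = I + t\, e_{12}$, a direct computation shows $P_t^{-1} D_i P_t = D_i + t(a_i - c_i)\, e_{12}$, which is of the form $X_i$ with $b_i := t(a_i - c_i)$. Hence $p$ evaluated at these inputs equals $P_t^{-1}\diag(\alpha, \gamma) P_t$, whose $(1,2)$-entry is $t(\alpha - \gamma)$. When $\alpha \neq \gamma$, varying $t$ realises every value of $K$ as the $(1,2)$-entry, putting every matrix with distinct diagonal entries into the image. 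For the residual case $\alpha = \gamma = c$, one defines $c \in S(K)$ to be those scalars for which the linear form $\mathbf{b} \mapsto M(\mathbf{a},\mathbf{b},\mathbf{c})$ is nonzero for some $(\mathbf{a}, \mathbf{c})$ in the joint fiber of $\bar p$ over $(c,c)$; for such $c$ arbitrary $(1,2)$-entries arise, while for $c \notin S(K)$ only $cI$ does, matching the claimed exceptional set. Zariski density of the image is then automatic, the complement being contained in a union of affine lines inside the $3$-dimensional variety $T_2(K)$.

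The hardest step will be the last assertion $|K \setminus S(K)| < d$ under $\ch(K) = 0$ or $\ch(K) > d$. My plan is to produce a single nonzero polynomial in $c$ of degree at most $d$ whose roots contain $K \setminus S(K)$, by extracting a resultant (or a leading-coefficient) from the identical vanishing of $M$ along the joint fiber of $\bar p$ over $(c, c)$. The characteristic hypothesis is precisely what should be needed to prevent this auxiliary polynomial from being identically zero (for instance by guaranteeing that a formal derivative of $\bar p$ does not vanish), and this is the step most sensitive to the exact combinatorial definition of strange points supplied by the referenced paper.
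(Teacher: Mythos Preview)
This theorem is not proved in the present paper at all: it is quoted verbatim from \cite{WaZhLu21} as background, so there is no ``paper's own proof'' to compare against. What the paper \emph{does} prove are the generalizations to $T_m(K)$ of part~(1) (trivially, from the definition of $\ord$), part~(2) (\autoref{thm:order_1}), and the Zariski-density clause of part~(3) (\autoref{lem:ZariskiDense}). The precise description of the image via strange points and the bound $|K\setminus S(K)|<d$ are used nowhere and re-proved nowhere in this paper.

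Comparing your approach to the paper's proofs of the generalized statements: for part~(2) your argument via the abelianization $\bar p$ and the linear-in-$\mathbf{b}$ off-diagonal polynomial $M$ is essentially the $m=2$ specialization of the paper's \mip\ machinery (\autoref{lem:polynomialMap} and \autoref{thm:order_1}), so the two agree. For the Zariski-density clause of~(3), your conjugation trick and the paper's \autoref{lem:equivalence}/\autoref{lem:diagonal} are the same idea---reduce to diagonals with distinct entries, which form a dense open set. Your claim that density is ``automatic'' is correct but for a cleaner reason than you give: the image already contains the open set $\{a_{11}\neq a_{22}\}$, which is dense regardless of how large $K\setminus S(K)$ is.

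On the last assertion $|K\setminus S(K)|<d$: you correctly flag this as the delicate step and correctly identify that the characteristic hypothesis enters through a non-vanishing derivative, but what you have written is a plan, not a proof. Since neither the exact definition of $S(K)$ nor this bound appears in the present paper, there is nothing here to check your sketch against; for that you would need to consult \cite{WaZhLu21} directly.
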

\begin{thmIntro}[Theorem 3.1, \cite{WaZhLu22}]
Let $p(x_1,\ldots, x_n)$ be a polynomial with zero constant term in non-commutative
variables over an algebraically closed field $K$. Then one of the following
statements holds:
\begin{enumerate}
    \item If $\ord\geq 3$ then $p(T_3(K))=0$;
    \item If $\ord=2$ the $p(T_3(K))=T_3(K)^{(1)}$;
    \item If $\ord=1$ the $p(T_3(K))=T_3(K)^{(0)}$;
    \item If $\ord=0$ then $p(T_3(K))$ is a Zariski dense subset of $T_3(K)$.
\end{enumerate}
\end{thmIntro}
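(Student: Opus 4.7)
The plan is to derive items (1)--(3) directly from the main containment result announced in the abstract, and to prove the Zariski density in (4) by showing that the differential of the evaluation map is surjective at a generic diagonal tuple.

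For (1), if $\ord \geq 3$ then by definition $p$ vanishes identically on $T_3(K)$, so $p(T_3(K)) = \{0\}$. For (2) and (3), I would invoke the main theorem of this paper: if $\ord = t$ then $p(T_m(K)) \subseteq T_m(K)^{(t-1)}$, with equality in the boundary cases $t = 1$ and $t = m-1$. For $m = 3$ these boundary values are exactly $t = 1$ and $t = 2$, so the containments become equalities and yield $p(T_3(K)) = T_3(K)^{(0)}$ when $\ord = 1$ and $p(T_3(K)) = T_3(K)^{(1)}$ when $\ord = 2$.

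The substantive work lies in (4). When $\ord = 0$, the polynomial $\bar p$ obtained by forcing every $x_i$ to commute is nonzero. Evaluating $p$ at diagonal matrices $a_i = \diag(\lambda_i, \mu_i, \nu_i)$ returns the diagonal matrix whose $(j,j)$-entry is $\bar p$ applied to the $j$th diagonal entries of the $a_i$, so the diagonal of $\Phi_p(T_3(K)^n)$ already contains a Zariski dense subset of $K^3$. To reach the super-diagonal entries I would perturb these diagonal tuples along the Jacobson radical by writing $a_i = a_i^{0} + n_i$ with $a_i^{0}$ diagonal and $n_i$ strictly upper triangular, and then expand $p(a_1, \ldots, a_n)$ as a polynomial in the entries of the $n_i$. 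The linear part of this expansion is governed by the multi-index $p$-inductive polynomials introduced earlier in the paper, and for a generic choice of $a_i^{0}$ these linear parts should produce independent contributions to the $(1,2)$, $(2,3)$, and $(1,3)$ entries. This is equivalent to asserting that $d\Phi_p$ is surjective at a generic diagonal tuple. Since $\Phi_p$ is a morphism of algebraic varieties, its image is constructible by Chevalley's theorem, and surjectivity of the differential at one point forces the image to contain a nonempty Zariski open subset of $T_3(K)$, hence to be Zariski dense.

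The main obstacle is verifying surjectivity of $d\Phi_p$ at a generic diagonal point. Concretely, one must show that certain multi-index $p$-inductive polynomials do not vanish identically on diagonal inputs; the delicate step is to argue that simultaneous vanishing of all the relevant ones on a generic diagonal tuple would force enough partial commutative reductions of $p$ to vanish that $\ord \geq 1$, contradicting the hypothesis. Once this combinatorial check is carried out for each of the three super-diagonal coordinates, the density statement of (4) follows.
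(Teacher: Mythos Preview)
Your treatment of parts (1)--(3) is correct and coincides with how the paper handles the $m=3$ case: the general containment $p(T_m(K))\subseteq T_m(K)^{(t-1)}$ together with the equality at the extremes $t=1$ and $t=m-1$ (Propositions~\ref{thm:order_1} and~\ref{thm:order_m-1}) gives exactly (2) and (3), and (1) is the definition of $\ord$.

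For (4) your route is genuinely different from the paper's, and it carries a real gap in positive characteristic. The paper does not compute any differential. Instead it observes (Lemma~\ref{lem:equivalence}) that every matrix in $T_3(K)$ with pairwise distinct diagonal entries is conjugate \emph{inside} $T_3(K)$ to its diagonal part; since $p(K)=K$ when $\ord=0$ (this is where algebraic closedness is used), every diagonal matrix lies in the image, and conjugation-invariance then puts the whole open set $\mathcal D=\{A:a_{11},a_{22},a_{33}\text{ distinct}\}$ inside $p(T_3(K))$. Density of $\mathcal D$ is then shown by an elementary perturbation of coordinates. This argument never touches the \mip s and is insensitive to the characteristic.

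Your differential argument, by contrast, requires $d\Phi_p$ to be surjective at some diagonal tuple. The off-diagonal part of this is fine: one can check that $\bar p(\vec x)=\sum_i p_i(\vec x,\vec 0)\,x_i$, so $\ord=0$ forces some $p_i$ to be nonzero, and the super-diagonal entries of the differential are then generically surjective. The problem is the \emph{diagonal} part. If $\mathrm{char}(K)=\ell>0$ and $\bar p$ happens to lie in $K[x_1^\ell,\dots,x_n^\ell]$ (e.g.\ $p(x)=x^\ell$), then every partial derivative of $\bar p$ vanishes identically, so $d\Phi_p$ is never surjective onto the diagonal coordinates, even though the image is still dense (Frobenius is bijective on $K$). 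Chevalley's theorem alone does not rescue you here: a dominant morphism can have nowhere-surjective differential in positive characteristic. So the implication ``differential surjective $\Rightarrow$ dense image'' is correct, but you cannot always arrange its hypothesis, and the reduction you sketch (``vanishing would force $\ord\ge1$'') applies to the $p_i$, not to the gradient of $\bar p$. In characteristic zero your outline goes through; for the full statement you need the paper's diagonalization argument or some substitute that avoids the differential on the diagonal.
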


    In this paper, we have studied \autoref{ques:Question} for general $m$, in \autoref{lem:ZariskiDense}, \autoref{thm:order_m-1}, \autoref{thm:order_1}, and \autoref{thm:order_t}. We take a unified approach to the general case. Our results are in terms of polynomial identities of the matrix algebra. These polynomials are known and are a generalization of Lie polynomials of a certain kind. We will be using concepts from basic algebraic geometry, a special family of polynomials to be called \mip s, and some polynomial manipulation to conclude our result.

\noindent\textbf{Organization of the paper:}
We start with a few definitions (e.g. order of a polynomial) and recall necessary theorems (e.g. polynomial identities of the upper triangular matrix algebras) in \autoref{sec:prelim}. Some examples have been provided in \autoref{sec:prelim} for the ease of understanding. After that, we present a proof for the case $n=4$ in \autoref{sec:n=4} concerning the images of non-commutative polynomials for the upper triangular matrix algebra of size $4$, which has been further generalized in \autoref{sec:mainThm}.
Note that this section is motivated by the results of \cite{WaZhLu21} and \cite{WaZhLu22}. Therein the authors used polynomials $p_{i}$ and $p_{ij}$ (associated with a given polynomial $p$) to conclude their results. We observe that this family gets bigger for the case $n=4$, as it includes another family of polynomials namely, $p_{ijk}$. This
necessitates the introduction of \mip s. 
In \autoref{sec:multiIndex}, firstly we present an algorithm for computing particular entries of matrices which are images under a polynomial map. 
This enables us to introduce the notion of \mip s for a given polynomial $p\in K[F_n]$. These are crucial in our study of images of polynomials, as can be seen from \autoref{sec:multiIndex}. This is coupled with an example of a diagrammatic presentation of multiplication, which we believe will help to understand the pathway of generalization. The \mip s corresponding to a polynomial $p$ has further been used to draw our main results in \autoref{sec:mainThm}. The main results in \autoref{sec:mainThm} show that in some cases the images are actually vector space. We provide an example (see \autoref{eg:counterExample}) here to show that this is not true in general. In \autoref{sec:Waring} we discuss that Waring type problem with respect to the polynomial $p$ and its image. The results of \autoref{sec:mainThm} and \autoref{sec:Waring} are summarized in \autoref{thm:mainThm}, which can be treated as the main theorem of this paper. Although we will assume the field to be algebraically closed while stating most of the results, quite some of them are true in the case of infinite fields. We note that \autoref{lem:ZariskiDense} can be altered to conclude about image of $\tk[m]^\times$ under a word map. This has been presented in \autoref{sec:groupMap} as \autoref{thm:groupMap}.

\noindent While communicating with the journal we came to know a similar work has been done in \cite{ChWa22}.

\section{Preliminaries}\label{sec:prelim}
Throughout this section $T_m(K)$ will denote the algebra of upper triangular matrices of size $m$ over the field $K$. For a ring $R$, the set of all invertible elements of $R$ will be denoted by $R^\times$. Consider $T_m(K)$ as a subspace of $T_{m+1}(K)$ by the embedding
\begin{align*}
    T_{m+1}(K)=\left\{\begin{pmatrix}A&\vec{v}\\0&a\end{pmatrix}:A\in T_m(K),\vec{v}\in K^m,a\in K\right\}.
\end{align*}
Then we have a chain of algebras as follows
\begin{align*}
    K=T_{1}(K)\subseteq T_2(K)\subseteq T_3(K)\cdots.
\end{align*}
We reserve the notation $T_m(K)^{(t)}$ to denote all the matrices in $T_m(K)$ such that $ij^{\text{th}}$ entry is $0$ whenever $j-i\leq t$.
\begin{example}
Take $m=4$ and $t=2$. Then
\begin{align*}
    T_4(K)^{(2)}=\left\{\begin{pmatrix}0&0&0&x\\
    &0&0&0\\&&0&0\\&&&0\end{pmatrix}:x\in K\right\}.
\end{align*}
\end{example}
Given a polynomial $f\in K[F_n]$ and an associative algebra $\mathcal{A}$, $f$ is called a \textit{polynomial identity of $\mathcal{A}$} if 
\begin{align*}
    f(a_1,a_2,\ldots, a_n)=0,\quad\text{for all }a_1,a_2,\ldots,a_n\in \mathcal{A}.
\end{align*}
For an algebra $\mathcal{A}$, we denote the set of all polynomial identities as $\mathrm{Id}(\mathcal{A})$. We have the following sequence of subsets
\begin{align*}
    \mathrm{Id}(K)\supseteq\mathrm{Id}(T_2(K))\supseteq\mathrm{Id}(T_3(K))\supseteq\cdots.
\end{align*}
\begin{example}
Consider the polynomial $p\in K[F_2]$ as $p(x_1,x_2)=(x_1x_2-x_2x_1)^2$. Take $u_i=\begin{pmatrix}a_i&c_i\\0&b_i\end{pmatrix}\in T_2(K)$ for $i=1,2$. Then we have
\begin{align*}
    &\left[\begin{pmatrix}a_1&c_1\\&b_1\end{pmatrix}\begin{pmatrix}a_2&c_2\\&b_2\end{pmatrix}-\begin{pmatrix}a_2&c_2\\&b_2\end{pmatrix}\begin{pmatrix}a_1&c_1\\&b_1\end{pmatrix}\right]^2\\
    =&\begin{pmatrix}0&a_1c_2+b_2c_1-a_2c_1-b_1c_2\\&0\end{pmatrix}^2\\
    =&\begin{pmatrix}0&0\\&0\end{pmatrix}.
\end{align*}
But note that 
\begin{align*}
    \left[\begin{pmatrix}
    1&2&0\\
    &1&3\\
    &&1
    \end{pmatrix}\begin{pmatrix}
    1&4&0\\
    &2&5\\
    &&1
    \end{pmatrix}-\begin{pmatrix}
    1&4&0\\
    &2&5\\
    &&1
    \end{pmatrix}\begin{pmatrix}
    1&2&0\\
    &1&3\\
    &&1
    \end{pmatrix}\right]^2\neq\begin{pmatrix}0&0&0\\&0&0\\&&0\end{pmatrix}.
\end{align*}
Thus, $p\in\mathrm{Id}(T_2(K))$ but $p\notin \mathrm{Id}(T_3(K))$. This motivates us to define the following.
\end{example}
\begin{defn}\label{defn:order}
A polynomial $p\in K[F_n]$ will be called to have \emph{order} $t$ if 
$p\in\mathrm{Id}(T_t(K))$ but $p\not\in\mathrm{Id}(T_{t+1}(K))$ where $t\geq 1$, and will be denoted by $\ord$. A polynomial $p$ will be said to have order $0$, if $p(K)\neq 0$.
\end{defn}

We would like to point out that this coincides with the definition of commutator degree of a polynomial \cite[Definition 3.1]{GaMe_unpub}.

\begin{example}\label{eg:order2}
The polynomial $p(x_1,x_2)=\left(x_1x_2-x_2x_1\right)^2$ is of order $2$.
\end{example}
The polynomial identities for the algebra of upper triangular matrix are well known, as stated in the following theorem.
\begin{thm}\cite[Theorem 5.4]{Dr2000}
Let $K$ be an infinite field and let $T_m(K)$ be the algebra of
 upper triangular matrices of size $m$.
The polynomial identity
\begin{align*}
[x_1 , x_2 ][x_3,x_4]\cdots[x_{2 m-1} , x_{2 m} ] = 0
\end{align*}
forms a basis of the polynomial identities of $T_m(K)$, where $[x_l,x_{l+1}]=x_1x_{l+1}-x_{l+1}x_l$.
\end{thm}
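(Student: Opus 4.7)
The plan is to prove the equality $\mathrm{Id}(T_m(K)) = \langle c_m \rangle_T$, where $c_m(x_1,\ldots,x_{2m}) := [x_1,x_2][x_3,x_4]\cdots[x_{2m-1},x_{2m}]$ and $\langle \cdot \rangle_T$ denotes the $T$-ideal generated. The easy direction is that $c_m \in \mathrm{Id}(T_m(K))$: for any $A, B \in T_m(K)$, the diagonals of $AB$ and $BA$ coincide, so $[A,B] \in T_m(K)^{(0)}$, which is the Jacobson radical $J$. Since $J^m = 0$, any product of $m$ commutators of elements of $T_m(K)$ vanishes, giving $\langle c_m\rangle_T \subseteq \mathrm{Id}(T_m(K))$.

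For the reverse inclusion, the plan is to exploit that $K$ is infinite, which forces every $T$-ideal to be generated by its multilinear elements via the standard polarization trick. It therefore suffices to show $P_n \cap \mathrm{Id}(T_m(K)) \subseteq \langle c_m \rangle_T$, where $P_n$ is the space of multilinear polynomials in $x_1,\ldots,x_n$. I would argue that modulo $\langle c_m \rangle_T$ each $f \in P_n$ is congruent to a linear combination of canonical monomials of the form
\begin{align*}
x_{i_1}\cdots x_{i_{a_0}}\,[x_{j_1}, x_{j'_1}]\,x_{i_{a_0+1}}\cdots [x_{j_k}, x_{j'_k}]\, x_{i_{a_k+1}}\cdots x_{i_n}
\end{align*}
with $0 \le k \le m-1$ bracket factors in a fixed positional pattern and with $j_s < j'_s$ inside each bracket. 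The reduction uses (i) the Leibniz rule $[ab,c] = a[b,c] + [a,c]b$ to push commutators into prescribed slots, (ii) the Jacobi identity to impose an ordering on nested brackets, and (iii) the relation $c_m \equiv 0$ modulo $\langle c_m\rangle_T$ to annihilate any monomial carrying $\ge m$ brackets.

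The linear independence of these canonical monomials modulo $\mathrm{Id}(T_m(K))$ would then be verified by explicit matrix-unit evaluations: for a canonical monomial with $k$ brackets in fixed positions, assign the $s$-th bracket pair to $(e_{r_s r_s}, e_{r_s, r_s+1})$ with increasing levels $1 \le r_1 < \cdots < r_k \le m-1$ and assign the non-bracket variables to diagonal matrix units $e_{r,r}$ chosen so that consecutive factors compose correctly. The chosen canonical monomial evaluates to the matrix unit $e_{1,k+1}$, while any other canonical monomial evaluates to $0$ at the same substitution (because the bracket structure or positional pattern differs), proving these monomials descend to a basis of $P_n/(P_n \cap \mathrm{Id}(T_m(K)))$ and hence giving the reverse inclusion.

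The main obstacle is the combinatorial reduction step — systematically forcing an arbitrary multilinear polynomial into the canonical form with at most $m-1$ brackets. This is where the specific polynomial $c_m$ does its real work, and where one must juggle the Leibniz and Jacobi identities together with the bracket-count bound. Once the canonical form is established, the resulting count of canonical monomials agrees with the known codimension sequence $c_n(T_m(K))$, so no additional identity is needed beyond $c_m$, completing the proof.
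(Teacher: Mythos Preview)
The paper does not prove this theorem at all: it is quoted verbatim from Drensky's textbook \cite{Dr2000} as background, with no argument given. There is therefore nothing in the paper to compare your proposal against.

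That said, a couple of remarks on your sketch. The easy direction is fine. For the reverse inclusion, two points deserve care. First, the reduction ``by polarization'' to multilinear polynomials is only valid in characteristic zero; over an arbitrary infinite field one can only reduce to multihomogeneous identities, so the argument as written does not cover the full statement. Second, and more seriously, the canonical form you propose---products of single variables interleaved with \emph{simple} two-variable commutators $[x_{j_s},x_{j'_s}]$---is not the right normal form modulo $\langle c_m\rangle_T$. The Leibniz and Jacobi manipulations you invoke do not reduce an arbitrary multilinear monomial to such a shape; the standard bases for the relatively free algebra of $T_m(K)$ (as in Drensky's Chapter~5, following Mal'tsev) involve longer left-normed commutator words, and the spanning argument is a genuine piece of combinatorics rather than a routine rewriting. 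Your separating substitutions with matrix units are the right idea for the independence half, but they would need to be matched to whatever basis you actually produce. In short, the strategy is the classical one, but the two steps you flag as ``the main obstacle'' are exactly where the content lies, and the details you give there are not yet correct.
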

Next we recall some terminology from algebraic geometry, for completeness. The space $K^n$ will be referred to as 
\emph{affine $n$-space}. A \emph{closed set} is given by common zeros of ideal in $K[y_1,y_2,\ldots,y_n]$, i.e., $T\subseteq K^n$ is closed if there exists an ideal $I\subseteq K[y_1,y_2,\ldots,y_n]$ such that
\begin{align*}
    T=\{(z_1,z_2,\ldots,z_n)\in K^n:f(z_1,z_2,\ldots,z_n)=0\quad\text{for all }f\in I\}.
\end{align*}
The set of complements of these closed sets form the so called \emph{Zariski topology}. 
A subset $U\subseteq K^n$ will be called \emph{Zariski dense}, if for all open sets $V\subseteq K^n$, we have $U\cap V\neq\emptyset$.
This is the usual definition of dense subsets in topological sense. A subset of the affine space will always be considered to be endowed with Zariski subspace topology. We end this section by providing the structure of the general polynomial we will be working with. Henceforth, $p\in K[F_n]$ will be assumed to be 
satisfying $p(\mathbf{0})=0$, and 
\begin{equation}\label{eq:generalPolynomial}
    p(x_1,x_2,\ldots,x_n)=\displaystyle\sum\limits_{k=1}^d\left(\sum\limits_{1\leq i_1,i_2,\ldots,i_k\leq n}\lambdak x_{i_1}x_{i_2}\ldots x_{i_k}\right),
\end{equation}
where $d$ is the degree of the polynomial and $\lambdak\in K$.

\section{Image of polynomials on \texorpdfstring{$\tk[4]$}{T4K}}\label{sec:n=4}
In this section we will prove results about the image of polynomials on $\tk$ and then in \autoref{sec:mainThm} using this we will generalize our idea. We start with the following lemma which describes the entries of the product of matrices.

\begin{lemma}\label{lem:product}
    Let $k\ge 2$ be an integer. Let 
    \begin{displaymath}
        u_i = 
        \begin{pmatrix}
            a_{11}(i) & a_{12}(i) & a_{13}(i) & a_{14}(i) \\
            & a_{22}(i) & a_{23}(i) & a_{24}(i) \\
            &           & a_{33}(i) & a_{34}(i) \\
            &           &           & a_{44}(i)
        \end{pmatrix} \in \tk,
    \end{displaymath}
    for all $i = 1,2,\ldots,n$. For any $1\le i_1,\ldots,i_k\le n$, we have
    \begin{displaymath}
        \Lambda_k = \prod_{j=1}^k u_{i_j} = 
        \begin{pmatrix}
            \displaystyle\prod_{j=1}^k a_{11}(i_j) & \lambda_{12} & \lambda_{13} & \lambda_{14} 
            \\[1ex]
            & \displaystyle\prod_{j=1}^k a_{22}(i_j) & \lambda_{23} & \lambda_{24} 
            \\[1ex]
            & & \displaystyle\prod_{j=1}^k a_{33}(i_j) & \lambda_{34} 
            \\[1ex]
            & & & \displaystyle\prod_{j=1}^k a_{44}(i_j)
        \end{pmatrix},
    \end{displaymath}
    where 
    \begin{align*}
        \lambda_{ll+1} & = \sum_{s=1}^{k}\left(\prod_{j=1}^{s-1}a_{ll}(i_j)\right) a_{l l+1}(i_s) \left(\prod_{j=s+1}^{k}a_{l+1l+1}(i_j)\right)
        \\[2ex]
        \lambda_{ll+2} & = \sum_{s=1}^{k}\left(\prod_{j=1}^{s-1}a_{ll}(i_j)\right) a_{l l+2}(i_s) \left(\prod_{j=s+1}^{k}a_{l+2l+2}(i_j)\right) 
        \\[1ex]
        & +  \sum_{1\le s<t\le k}\left(\prod_{j=1}^{s-1}a_{ll}(i_j)\right) a_{l l+1}(i_s) \left(\prod_{j=s+1}^{t-1}a_{l+1l+1}(i_j)\right)a_{l+1 l+2}(i_t)  \left(\prod_{j=t+1}^{k}a_{l+2l+2}(i_j)\right)
        \\[2ex]
        \lambda_{14} & = \sum_{s=1}^{k}\left(\prod_{j=1}^{s-1}a_{11}(i_j)\right) a_{14}(i_s) \left(\prod_{j=s+1}^{k}a_{44}(i_j)\right) 
        \\[1ex]
        & +  \sum_{1\le s<t\le k}\left(\prod_{j=1}^{s-1}a_{11}(i_j)\right) a_{12}(i_s) \left(\prod_{j=s+1}^{t-1}a_{22}(i_j)\right)a_{24}(i_t)  \left(\prod_{j=t+1}^{k}a_{44}(i_j)\right)
        \\[1ex]
        & +  \sum_{1\le s<t\le k}\left(\prod_{j=1}^{s-1}a_{11}(i_j)\right) a_{13}(i_s) \left(\prod_{j=s+1}^{t-1}a_{33}(i_j)\right)a_{34}(i_t)  \left(\prod_{j=t+1}^{k}a_{44}(i_j)\right)
        \\[1ex]
        & +  \sum_{1\le s<t<u\le k}\left[\left(\prod_{j=1}^{s-1}a_{11}(i_j)\right) a_{12}(i_s) \left(\prod_{j=s+1}^{t-1}a_{22}(i_j)\right)a_{23}(i_t)  \left(\prod_{j=t+1}^{u-1}a_{33}(i_j)\right)\right.
        \\[1ex]
        & \kern 2cm \left.a_{34}(i_u)  \left(\prod_{j=u+1}^{k}a_{44}(i_j)\right)\right]
    \end{align*}
\end{lemma}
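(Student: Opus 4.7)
The plan is to prove this by reducing it to a single combinatorial expansion of the iterated matrix product. Specifically, iterating the definition $(AB)_{lm} = \sum_r A_{lr} B_{rm}$ gives
\[
(\Lambda_k)_{lm} \;=\; \sum_{l = r_0 \leq r_1 \leq \cdots \leq r_k = m} \; \prod_{j=1}^{k} a_{r_{j-1} r_j}(i_j),
\]
where the monotonicity $r_{j-1}\le r_j$ comes for free because each $u_{i_j}$ is upper triangular (so $a_{rs}(i_j)=0$ when $r>s$) and any sequence violating monotonicity contributes zero. Once this master formula is in place, the four claims of the lemma are obtained by partitioning the set of non-decreasing sequences $(r_0,\ldots,r_k)$ according to the positions at which a \emph{strict} ascent $r_{j-1}<r_j$ occurs, and what the size of the ascent is.

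First I would dispose of the diagonal entries: when $l=m$, no ascents are possible, the only admissible sequence is the constant one, and the contribution is $\prod_{j=1}^k a_{ll}(i_j)$. Next, for $m-l=1$ there must be exactly one ascent of size $1$, at some position $s\in\{1,\ldots,k\}$; the factors to the left and to the right are then forced to be $a_{ll}(i_j)$ and $a_{l+1\,l+1}(i_j)$ respectively, which reproduces the stated formula for $\lambda_{l\,l+1}$. For $m-l=2$ the admissible ascent patterns are either a single jump of size $2$ at position $s$, or two jumps of size $1$ at positions $s<t$ taking the path $l\to l+1\to l+2$; these two cases give the two summands in $\lambda_{l\,l+2}$. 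Finally, for $(l,m)=(1,4)$ the possible ascent patterns are (i) one jump of size $3$, (ii) jumps $1\to 2\to 4$ at $s<t$, (iii) jumps $1\to 3\to 4$ at $s<t$, and (iv) jumps $1\to 2\to 3\to 4$ at $s<t<u$, matching exactly the four sums in $\lambda_{14}$.

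The only real work is bookkeeping: one must check that the index ranges $\prod_{j=1}^{s-1}$, $\prod_{j=s+1}^{t-1}$, $\prod_{j=t+1}^{u-1}$, $\prod_{j=u+1}^{k}$ tile $\{1,\ldots,k\}$ correctly so that each of the $k$ factors $a_{\ast\ast}(i_j)$ appears exactly once in every term, with the correct diagonal label before the first ascent, between successive ascents, and after the last ascent. A convenient sanity check is that the total number of factors in each summand is always $k$ and the multiset of indices $\{i_1,\ldots,i_k\}$ is preserved. If one prefers an inductive write-up, the base case $k=2$ is a three-line direct computation and the inductive step $\Lambda_k=\Lambda_{k-1}u_{i_k}$ just appends one more factor at the end, which either extends a trailing diagonal run or creates a new ascent at position $s=k$; this recovers the same classification. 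There is no substantive obstacle here beyond careful indexing, since the lemma is essentially a bookkeeping statement about matrix multiplication specialized to the upper-triangular case.
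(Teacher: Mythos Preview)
Your argument is correct and conceptually cleaner than the paper's. The paper proves the lemma by a direct induction on $k$: it verifies the base case $k=2$ by writing out the product of two generic $4\times 4$ upper-triangular matrices, and then for the inductive step computes $\Lambda_{m+1}=\Lambda_m\cdot u_{i_{m+1}}$ entry by entry, substituting the induction hypothesis for each $\lambda_{\alpha\beta}$ and regrouping terms to see that the new $\lambda'_{\alpha\beta}$ has the stated form. Your approach instead proves the master path-sum formula once and then reads off each entry by classifying non-decreasing sequences from $l$ to $m$ according to their ascent pattern; this avoids the repetitive case-checking and makes transparent \emph{why} the formulas have the shape they do (each summand corresponds to a monotone lattice path $l\to m$). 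What the paper's induction buys is that no combinatorial language is needed and every term is visibly accounted for; what your approach buys is uniformity and a proof that generalizes immediately to $T_m(K)$ for arbitrary $m$ (which is in fact what the paper needs later in \autoref{prop:generalizedMultiplication}). You even note at the end that the inductive write-up is an alternative, which is exactly the paper's route.
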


\begin{proof}
    We will prove this by the method of induction on $k$. Let $k=2$. Then we have
    \begin{align*}
        u_{i_1}u_{i_2} & = 
        \begin{pmatrix}
            a_{11}(i_1)a_{11}(i_2) 
            & a_{11}(i_1)a_{12}(i_2) 
            & a_{11}(i_1)a_{13}(i_2) 
            & a_{11}(i_1)a_{14}(i_2)
            \\
            & +a_{12}(i_1)a_{22}(i_2)
            & +a_{12}(i_1)a_{23}(i_2)
            & +a_{12}(i_1)a_{24}(i_2)
            \\
            & 
            & +a_{13}(i_1)a_{33}(i_2)
            & +a_{13}(i_1)a_{34}(i_2)
            \\
            & 
            & 
            & +a_{14}{(i_1)a_{44}(i_2)}
            \\[2ex] 
            & a_{22}(i_1)a_{22}(i_2)
            & a_{22}(i_1)a_{23}(i_2)
            & a_{22}(i_1)a_{24}(i_2)
            \\
            & 
            & +a_{23}(i_1)a_{33}(i_2)
            & +a_{23}(i_1)a_{34}(i_2)
            \\
            & 
            & 
            & +a_{34}{(i_1)a_{44}(i_2)}
            \\[2ex]
            & 
            & a_{33}(i_1)a_{33}(i_2)
            & a_{33}(i_1)a_{34}(i_2)
            \\
            & 
            & 
            & +a_{34}(i_1)a_{44}(i_2)
            \\[2ex]
            & 
            &
            & a_{44}(i_1)a_{44}(i_2)
        \end{pmatrix}.
    \end{align*} 
    Hence, the formula is true for $k=2$. Suppose that it is true for $k=m$. We need to show the same holds for $k=m+1$. Note that 
    \begin{align*}
        \prod_{j=1}^{m+1} u_{i_j} & =  \left(\prod_{j=1}^m u_{i_j}\right) u_{i_{m+1}} 
        \\[1ex]
        & = \begin{pmatrix}
            \displaystyle\prod_{j=1}^{m+1} a_{11}(i_j) & \lambda'_{12} & \lambda'_{13} & \lambda'_{14} 
            \\[1ex]
            & \displaystyle\prod_{j=1}^{m+1} a_{22}(i_j) & \lambda'_{23} & \lambda'_{24} 
            \\[1ex]
            & & \displaystyle\prod_{j=1}^{m+1} a_{33}(i_j) & \lambda'_{34} 
            \\[1ex]
            & & & \displaystyle\prod_{j=1}^{m+1} a_{44}(i_j)
        \end{pmatrix}. 
    \end{align*}
    Now we will compute $\lambda'_{\alpha \beta}$. We have 
    \begin{align*}
        \lambda'_{12} & = \prod_{j=1}^{m}a_{11}(i_j) a_{12}(i_{m+1}) + \textcolor{blue}{\lambda_{12}} a_{22}(i_{m+1}) 
        \\
        & = \prod_{j=1}^{m}a_{11}(i_j) a_{12}(i_{m+1}) + \textcolor{blue}{\sum_{s=1}^m \left(\prod_{j=1}^{s-1}a_{11}(i_j)\right)a_{12}(i_s) \left(\prod_{j=s+1}^m a_{22}(i_j)\right)} a_{22}(i_{m+1}) 
        \\
        & = \sum_{s=1}^{m+1} \left(\prod_{j=1}^{s-1}a_{11}(i_j)\right)a_{12}(i_s) \left(\prod_{j=s+1}^{m+1} a_{22}(i_j)\right).
    \end{align*}
    Similarly, it can be shown for the entries $\lambda'_{23}$ and $\lambda'_{34}$. 
    \begin{align*}
        \lambda'_{13} & = \prod_{j=1}^m a_{11}(i_j)a_{13}(i_{m+1}) + \textcolor{blue}{\lambda_{12}}a_{23}(i_{m+1}) + \textcolor{newBlue}{\lambda_{13}}a_{33}(i_{m+1}) 
        \\
        & = \prod_{j=1}^m a_{11}(i_j)a_{13}(i_{m+1}) 
        \\
        & \quad + \textcolor{blue}{\sum_{s=1}^m \left(\prod_{j=1}^{s-1}a_{11}(i_j)\right)a_{12}(i_s) \left(\prod_{j=s+1}^m a_{22}(i_j)\right)}a_{23}(i_{m+1}) 
        \\
        & \quad + \textcolor{newBlue}{\sum_{s=1}^{m}\left(\prod_{j=1}^{s-1}a_{11}(i_j)\right) a_{13}(i_s) \left(\prod_{j=s+1}^{m}a_{33}(i_j)\right) }a_{33}(i_{m+1}) 
        \\
        & \quad + \textcolor{newBlue}{\sum_{1\le s<t\le m}\left(\prod_{j=1}^{s-1}a_{11}(i_j)\right) a_{12}(i_s) \left(\prod_{j=s+1}^{t-1}a_{22}(i_j)\right)a_{23}(i_t)  \left(\prod_{j=t+1}^{m}a_{33}(i_j)\right)}a_{33}(i_{m+1})
        \\
        & = \textcolor{black}{\sum_{s=1}^{m+1}\left(\prod_{j=1}^{s-1}a_{11}(i_j)\right) a_{13}(i_s) \left(\prod_{j=s+1}^{m+1}a_{33}(i_j)\right)} 
        \\
        & + \quad \textcolor{black}{\sum_{1\le s<t\le m+1}\left(\prod_{j=1}^{s-1}a_{11}(i_j)\right) a_{12}(i_s) \left(\prod_{j=s+1}^{t-1}a_{22}(i_j)\right)a_{23}(i_t)  \left(\prod_{j=t+1}^{m+1}a_{33}(i_j)\right)}.
    \end{align*}
    Similarly, it can be shown for the entry $\lambda'_{24}$. Finally,
    \begin{align*}
        \lambda'_{14} & = \prod_{j=1}^m a_{11}(i_j)a_{14}(i_{m+1}) + \textcolor{blue}{\lambda_{12}} a_{24}(i_{m+1}) + \textcolor{newBlue}{\lambda_{13}} a_{34}(i_{m+1}) + \textcolor{teal}{\lambda_{14}} a_{44}(i_{m+1})
        \\
        & = \prod_{j=1}^m a_{11}(i_j)a_{14}(i_{m+1}) 
        \\
        & \quad + \textcolor{blue}{\sum_{s=1}^m \left(\prod_{j=1}^{s-1}a_{11}(i_j)\right)a_{12}(i_s) \left(\prod_{j=s+1}^m a_{22}(i_j)\right)}a_{24}(i_{m+1})
        \\
        & \quad + \textcolor{newBlue}{\sum_{s=1}^{m}\left(\prod_{j=1}^{s-1}a_{11}(i_j)\right) a_{13}(i_s) \left(\prod_{j=s+1}^{m}a_{33}(i_j)\right) }a_{34}(i_{m+1}) 
        \\
        & \quad + \textcolor{newBlue}{\sum_{1\le s<t\le m}\left(\prod_{j=1}^{s-1}a_{11}(i_j)\right) a_{12}(i_s) \left(\prod_{j=s+1}^{t-1}a_{22}(i_j)\right)a_{23}(i_t)  \left(\prod_{j=t+1}^{m}a_{33}(i_j)\right)}a_{34}(i_{m+1})
        \\
        & \quad + \textcolor{teal}{\sum_{s=1}^{k}\left(\prod_{j=1}^{s-1}a_{11}(i_j)\right) a_{14}(i_s) \left(\prod_{j=s+1}^{k}a_{44}(i_j)\right)} a_{44}(i_{m+1})
        \\[1ex]
        & \quad +  \textcolor{teal}{\sum_{1\le s<t\le k}\left(\prod_{j=1}^{s-1}a_{11}(i_j)\right) a_{12}(i_s) \left(\prod_{j=s+1}^{t-1}a_{22}(i_j)\right)a_{24}(i_t)  \left(\prod_{j=t+1}^{k}a_{44}(i_j)\right)}a_{44}(i_{m+1})
        \\[1ex]
        & \quad +  \textcolor{teal}{\sum_{1\le s<t\le k}\left(\prod_{j=1}^{s-1}a_{11}(i_j)\right) a_{13}(i_s) \left(\prod_{j=s+1}^{t-1}a_{33}(i_j)\right)a_{34}(i_t)  \left(\prod_{j=t+1}^{k}a_{44}(i_j)\right)}a_{44}(i_{m+1})
        \\[1ex]
        & \quad +  \textcolor{teal}{\sum_{1\le s<t<u\le k}\left[\left(\prod_{j=1}^{s-1}a_{11}(i_j)\right) a_{12}(i_s) \left(\prod_{j=s+1}^{t-1}a_{22}(i_j)\right)a_{23}(i_t)  \left(\prod_{j=t+1}^{u-1}a_{33}(i_j)\right)\right.}
        \\[1ex]
        & \kern 3cm \textcolor{teal}{\left.a_{34}(i_u)  \left(\prod_{j=u+1}^{k}a_{44}(i_j)\right)\right]}a_{44}(i_{m+1})
        \\
        & = \sum_{s=1}^{m+1}\left(\prod_{j=1}^{s-1}a_{11}(i_j)\right) a_{14}(i_s) \left(\prod_{j=s+1}^{m+1}a_{44}(i_j)\right) 
        \\[1ex]
        & +  \sum_{1\le s<t\le m+1}\left(\prod_{j=1}^{s-1}a_{11}(i_j)\right) a_{12}(i_s) \left(\prod_{j=s+1}^{t-1}a_{22}(i_j)\right)a_{24}(i_t)  \left(\prod_{j=t+1}^{m+1}a_{44}(i_j)\right)
        \\[1ex]
        & +  \sum_{1\le s<t\le m+1}\left(\prod_{j=1}^{s-1}a_{11}(i_j)\right) a_{13}(i_s) \left(\prod_{j=s+1}^{t-1}a_{33}(i_j)\right)a_{34}(i_t)  \left(\prod_{j=t+1}^{m+1}a_{44}(i_j)\right)
        \\[1ex]
        & +  \sum_{1\le s<t<u\le m+1}\left[\left(\prod_{j=1}^{s-1}a_{11}(i_j)\right) a_{12}(i_s) \left(\prod_{j=s+1}^{t-1}a_{22}(i_j)\right)a_{23}(i_t)  \left(\prod_{j=t+1}^{u-1}a_{33}(i_j)\right)\right.
        \\[1ex]
        & \kern 2cm \left.a_{34}(i_u)  \left(\prod_{j=u+1}^{m+1}a_{44}(i_j)\right)\right].
    \end{align*}
    This proves the lemma.
\end{proof}

Using the previous lemma, it is easy to describe the entries of polynomial images, as it is evident from the following result. The next lemma will prove how does $p \left(\vec{\mathbf{u}}\right)$ look like where $\vec{\mathbf{u}}$ is $\left(u_1,\ldots,u_n\right)$.

\begin{lemma}\label{lem:polynomialMap}
    Let
    \begin{displaymath}
        u_i = 
              \begin{pmatrix}
                  a_{11}(i) & a_{12}(i) & a_{13}(i) & a_{14}(i) \\
                            & a_{22}(i) & a_{23}(i) & a_{24}(i) \\
                            &           & a_{33}(i) & a_{34}(i) \\
                            &           &           & a_{44}(i)
              \end{pmatrix} \in \tk,
   \end{displaymath}
   for $i=1,\ldots,n$. Then, $p \left(\vec{\mathbf{u}}\right)$ is given by 
   \begin{align*}
        \begin{pmatrix}
            p\left(\avect{11}\right) 
            & \displaystyle\sum_i p_i \left(\avec{1,2}\right)(i) 
            & \displaystyle\sum_i p_i \left(\avec{1,3}\right)(i)
            & \displaystyle\sum_i p_i \left(\avec{1,4}\right)(i)
            \\
            & 
            & + \displaystyle\sum_{i,j}p_{ij}\left(\avec{1,2,3}\right)(i,j)
            & + \displaystyle\sum_{i,j}p_{ij}\left(\avec{1,2,4}\right)(i,j)
            \\ 
            & 
            & 
            & + \displaystyle\sum_{i,j}p_{ij}\left(\avec{1,3,4}\right)(i,j)
            \\ 
            &
            & 
            & + \displaystyle\sum_{i,j,k}p_{ijk}\left(\avec{1,2,3,4}\right)(i,j,k)
            \\[5ex]
            &  p\left(\avect{22}\right) 
            & \displaystyle\sum_i p_i \left(\avec{2,3}\right)(i) 
            & \displaystyle\sum_i p_i \left(\avec{2,4}\right)(i) 
            \\
            &
            & 
            & +\displaystyle\sum_{i,j}p_{ij}\left(\avec{2,3,4}\right)(i,j)
            \\[5ex]
            &   
            & p\left(\avect{33}\right) 
            & \displaystyle\sum_i p_i \left(\avec{3,4}\right)(i) 
            \\[5ex]
            & 
            & 
            &  p\left(\avect{44}\right)
        \end{pmatrix},
   \end{align*}
   where $\vec{\mathbf{u}} = \left(u_1,\ldots,u_n\right) $  $\avect{ii} = \left(a_{ii}(1),\ldots, a_{ii}(n)\right)$ and
   \begin{displaymath}
    p_{ijk}\left(\avec{\alpha,\beta,\gamma, \delta}\right)(i,j,k) = p_{ijk}\left(\avect{\alpha \alpha},\avect{\beta \beta}, \avect{\gamma \gamma}, \avect{\delta \delta}\right)a_{\alpha \beta}(i)a_{\beta \gamma}(j)a_{\gamma \delta}(k).
   \end{displaymath} 
   Here $p_{ijk}$ is a polynomial over $K$ for $i,j=1,2,\ldots, n$. 
\end{lemma}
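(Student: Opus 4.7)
The plan is to apply Lemma~\ref{lem:product} monomial-by-monomial to the expansion \eqref{eq:generalPolynomial} of $p$, and then reorganize the resulting sums by grouping summands according to which positions contribute the superdiagonal factors.

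First I would write
\[
p(\vec{\mathbf{u}}) \;=\; \sum_{k=1}^d \sum_{1\le i_1,\ldots,i_k \le n} \lambdak \, u_{i_1}u_{i_2}\cdots u_{i_k},
\]
and observe that, since every fixed entry of a matrix is a $K$-linear functional, the $(\alpha,\beta)$-entry of $p(\vec{\mathbf{u}})$ is simply the $\mu$-weighted sum of the $(\alpha,\beta)$-entry of each monomial product. For the diagonal case Lemma~\ref{lem:product} gives $\prod_j a_{\alpha\alpha}(i_j)$, which collapses directly to $p(\avect{\alpha\alpha})$, matching the stated diagonal entries.

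For the superdiagonal $(\alpha,\alpha+1)$, the formula $\lambda_{\alpha,\alpha+1}$ in Lemma~\ref{lem:product} contains a single factor $a_{\alpha,\alpha+1}(i_s)$ in each summand. I would interchange the outer sum over $(k,i_1,\ldots,i_k)$ with the inner sum over $s$ and group by the value $i := i_s$; the polynomial $p_i$ is then read off as the polynomial in the two diagonal tuples $\avect{\alpha\alpha}, \avect{\alpha+1,\alpha+1}$ whose coefficients are inherited from the $\mu$'s of $p$ with index $i$ at position $s$. The same reorganization for the $(\alpha,\alpha+2)$ entries splits into two groups: summands containing a single factor $a_{\alpha,\alpha+2}$ produce $p_i$-contributions, while summands containing a pair $a_{\alpha,\alpha+1}(i_s)\,a_{\alpha+1,\alpha+2}(i_t)$ produce $p_{ij}$-contributions indexed by the two chosen positions $s<t$.

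The $(1,4)$-entry is the most involved case, because $\lambda_{14}$ in Lemma~\ref{lem:product} has four types of summands: one factor $a_{14}$, a pair $(a_{12},a_{24})$, a pair $(a_{13},a_{34})$, and a triple $(a_{12},a_{23},a_{34})$. After reindexing by the chosen positions $s,t,u$, these respectively yield the $p_i$, the two $p_{ij}$, and the $p_{ijk}$ contributions displayed in the statement. The only genuine obstacle is the notational bookkeeping required to confirm that in each case the associated coefficient polynomial depends only on the diagonal tuples and not on any superdiagonal entries of the $u_i$; but this is immediate from the fact that each $\lambda_{\alpha\beta}$ in Lemma~\ref{lem:product} is multilinear of degree one in each superdiagonal variable appearing in it, so once those variables are factored out the remaining coefficient lies in $K[\,\avect{11},\avect{22},\avect{33},\avect{44}\,]$, as required.
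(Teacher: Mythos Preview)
Your proposal is correct and follows essentially the same approach as the paper: both apply Lemma~\ref{lem:product} to each monomial of \eqref{eq:generalPolynomial}, then reorganize the resulting sums by grouping according to which indices carry the superdiagonal factors, thereby reading off the polynomials $p_i$, $p_{ij}$, $p_{ijk}$ in the diagonal tuples. Your remark that each $\lambda_{\alpha\beta}$ is multilinear in the superdiagonal variables, so the coefficient polynomials automatically lie in $K[\avect{11},\avect{22},\avect{33},\avect{44}]$, is a clean way to justify the well-definedness that the paper leaves implicit in its explicit computations.
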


\begin{proof}
    Recall that the form of $p$ is given by \autoref{eq:generalPolynomial}.  Using \autoref{lem:product}, we have
    \begin{align*}
        p \left(\vec{\mathbf{u}}\right) & = \sum_{k=1}^{d} \left(\sum_{1\leq i_1,\ldots,i_k \leq n} \lambdak u_{i_1} \cdots u_{i_k}\right)
        \\[1ex]
        & = \sum_{k=1}^d \left(\sum_{1\leq i_1,\ldots,i_k \leq n} \lambdak\Lambda_k\right) 
        \\[1ex]
        & = 
        \begin{pmatrix}
            p(\avect{11}) & a'_{12} & a'_{13} & a'_{14} 
            \\
            0             & p(\avect{22}) & a'_{23} & a'_{24} 
            \\
            0             & 0       & p(\avect{33}) & a'_{34} 
            \\
            0             & 0       & 0       & p(\avect{44}) 
            \\
        \end{pmatrix},
    \end{align*}
    where
    \begin{align*}
        a'_{l l+1} & = \sum_{k=1}^d \left(\sum_{1\le i_1,\ldots,i_k\leq n}\lambdak \lambda'_{l l+1} \right)
        \\
        & = \sum_{k=1}^d \left(\sum_{1\le i_1,\ldots,i_k\leq n}\lambdak \sum_{s=1}^{m+1} \left(\prod_{j=1}^{s-1}a_{l l}(i_j)\right)a_{l l+1}(i_s) \left(\prod_{j=s+1}^{m+1} a_{l+1 l+1}(i_j)\right)\right)
        \\
        & = \sum_{i=1}^n p_i \left(\avect{ll},\avect{l+1 l+1}\right)a_{l l+1}(i),
    \end{align*}
    where $p_i \in K \left[\mathbf{x}_1,\mathbf{x}_2\right]$ is a polynomial over $K$ for $i=1,\ldots,n$.
    \begin{align*}
        a'_{ll+2} & =  \sum_{k=1}^d \left(\sum_{1\le i_1,\ldots,i_k\leq n}\lambdak \lambda'_{ll+2} \right)
        \\
        & = \sum_{k=1}^d \left(\sum_{1\le i_1,\ldots,i_k\leq n}\lambdak \left[\sum_{s=1}^{m+1}\left(\prod_{j=1}^{s-1}a_{l l}(i_j)\right) a_{l l+2}(i_s) \left(\prod_{j=s+1}^{m+1}a_{l+2 l+2}(i_j)\right)\right.\right.
        \\
        & + \sum_{1\le s<t\le m+1}\left(\prod_{j=1}^{s-1}a_{11}(i_j)\right) a_{l l+1}(i_s) \left(\prod_{j=s+1}^{t-1}a_{l+1 l+1 }(i_j)\right)a_{l+1 l+2}(i_t) 
        \\
        &  \kern 5cm \left.\left. \left(\prod_{j=t+1}^{m+1}a_{l+2 l+2}(i_j)\right)\right]\right)
        \\
        & = \sum_{i=1}^n p_i \left(\avect{l l},\avect{l+2 l+2}\right)a_{l l+1}(i)  + \sum_{i,j=1}^n p_{ij}\left(\avect{l l},\avect{l+1 l+1},\avect{l+2 l+2}\right)a_{l l+1}(i) a_{l+1 l+2}(j),
    \end{align*}
    where $p_{ij}$ is a polynomial in $K \left[\mathbf{x}_1,\mathbf{x}_2,\mathbf{x}_3\right]$ for $i,j=1,\ldots,n$. Finally, 
    \begin{align*}
        a_{14}' & =  \sum_{k=1}^d \left(\sum_{1\le i_1,\ldots,i_k\leq n}\lambdak \lambda'_{14} \right)
        \\
        & = \sum_{k=1}^d \left(\sum_{1\le i_1,\ldots,i_k\leq n}\lambdak \left[\sum_{s=1}^{m+1}\left(\prod_{j=1}^{s-1}a_{11}(i_j)\right) a_{14}(i_s) \left(\prod_{j=s+1}^{m+1}a_{44}(i_j)\right)\right.\right.
        \\
        & + \sum_{1\le s<t\le m+1}\left(\prod_{j=1}^{s-1}a_{11}(i_j)\right) a_{12}(i_s) \left(\prod_{j=s+1}^{t-1}a_{22}(i_j)\right)a_{24}(i_t)  \left(\prod_{j=t+1}^{m+1}a_{44}(i_j)\right)
        \\
        & + \sum_{1\le s<t\le m+1}\left(\prod_{j=1}^{s-1}a_{11}(i_j)\right) a_{13}(i_s) \left(\prod_{j=s+1}^{t-1}a_{33}(i_j)\right)a_{34}(i_t)  \left(\prod_{j=t+1}^{m+1}a_{44}(i_j)\right) 
        \\
        & + \sum_{1\le s<t<u\le m+1}\left[\left(\prod_{j=1}^{s-1}a_{11}(i_j)\right) a_{12}(i_s) \left(\prod_{j=s+1}^{t-1}a_{22}(i_j)\right)a_{23}(i_t)  \left(\prod_{j=t+1}^{u-1}a_{33}(i_j)\right)\right.
        \\
        & \left.\left.\kern 2cm \left.a_{34}(i_u)  \left(\prod_{j=u+1}^{m+1}a_{44}(i_j)\right)\right]\right]\right)
        \\
        & = \sum_{i=1}^n p_i \left(\avect{11},\avect{22}\right)a_{12}(i)  + \sum_{i,j=1}^n p_{ij}\left(\avect{11},\avect{22},\avect{33}\right)a_{12}(i) a_{23}(j)
        \\
        & + \sum_{i,j=1}^n p_{ij}\left(\avect{11},\avect{22},\avect{33}\right)\avect{44}a_{12}(i) a_{23}(j)a_{34},
    \end{align*}
    where $p_{ijk} \in K \left[\mathbf{x}_1,\mathbf{x}_2,\mathbf{x}_3\right]$ is a polynomial for $i,j,k=1,\ldots,n$.
\end{proof}

Now suppose that we are given with a matrix which has a nonzero entry at a particular place. Then depending upon the order of $p$ we will find either of $p_i,~p_{ij},$ or $p_{ijk}$ which is nonzero. To get an arbitrary image with certain conditions we need to manipulate the images of these polynomials, to show the candidacy of the matrix inside the desired image set. The following technical lemma will be helpful in this regard.

\begin{lemma}\label{lem:polyNonZero}
    Let $p\in K[F_{3n}]$ be a polynomial over a field $K$. Suppose there exists $\vec{a},\vec{b},\vec{d}\in K^n$ such that $p \left(\vec{a},\vec{b},\vec{d}\right)\neq 0$, then there exist $\vec{a}',\vec{b}',\vec{c}',\vec{d}'\in K^n$ such that 
    \begin{gather}\label{eq:polyNonZero}
        \begin{aligned}
            & p \left(\vec{a}',\vec{b}',\vec{c}'\right) \neq 0 \\
            & p \left(\vec{a}',\vec{b}',\vec{d}'\right) \neq 0 \\
            & p \left(\vec{a}',\vec{c}',\vec{d}'\right) \neq 0, \text{ and }\\
            & p \left(\vec{b}',\vec{c}',\vec{d}'\right) \neq 0.
        \end{aligned}
    \end{gather}
\end{lemma}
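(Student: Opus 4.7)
The plan is to package all four nonvanishing conditions into a single polynomial and then invoke the fact that a nonzero polynomial over an (infinite) field has some nonvanishing evaluation.

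First, I would reinterpret the situation. Although $p \in K[F_{3n}]$ is a priori a non-commutative polynomial, once we substitute scalar arguments from $K^n$ the expression becomes a genuine commutative polynomial in $3n$ variables, which I will still denote by $p$, lying in the polynomial ring $K[\vec{x},\vec{y},\vec{z}]$ where $\vec{x}=(x_1,\ldots,x_n)$ etc. The hypothesis $p(\vec{a},\vec{b},\vec{d})\neq 0$ says precisely that this polynomial is not identically zero.

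Next, introduce a fourth block of variables $\vec{w}=(w_1,\ldots,w_n)$ and form the product
\begin{equation*}
Q(\vec{x},\vec{y},\vec{z},\vec{w}) \;=\; p(\vec{x},\vec{y},\vec{z})\,p(\vec{x},\vec{y},\vec{w})\,p(\vec{x},\vec{z},\vec{w})\,p(\vec{y},\vec{z},\vec{w}) \;\in\; K[\vec{x},\vec{y},\vec{z},\vec{w}].
\end{equation*}
Each of the four factors is a nonzero polynomial: for the first this is the hypothesis, and the remaining three are obtained from the first by a mere relabelling of which block of variables plays the role of the ``third'' argument, so each of them also admits the specialization $\vec{a},\vec{b},\vec{d}$ (in the appropriate three of its blocks) and evaluates to the nonzero scalar $p(\vec{a},\vec{b},\vec{d})$. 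Since $K[\vec{x},\vec{y},\vec{z},\vec{w}]$ is an integral domain, the product $Q$ is a nonzero polynomial in $4n$ variables.

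Finally, because $K$ is infinite (indeed algebraically closed, per the standing assumption of the paper), a nonzero polynomial in finitely many variables must have some evaluation point where it does not vanish. Pick any $(\vec{a}',\vec{b}',\vec{c}',\vec{d}') \in K^{4n}$ with $Q(\vec{a}',\vec{b}',\vec{c}',\vec{d}')\neq 0$; then every one of the four factors is individually nonzero at that point, which is exactly the conjunction of inequalities in \eqref{eq:polyNonZero}. There is no real obstacle here; the only point that requires a moment's care is the justification that each factor of $Q$ is a nonzero polynomial (via the hypothesis and relabelling), after which the argument reduces to ``product of nonzero polynomials over an infinite field has a nonzero value'', which is standard.
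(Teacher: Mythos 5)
Your proof is correct and follows essentially the same approach as the paper: the paper also forms the product $q(x,y,z,w)=p(x,y,z)\,p(x,y,w)\,p(x,z,w)\,p(y,z,w)$, observes that it is a nonzero polynomial because $K$ has no zero divisors, and concludes that there is a point where all four factors are simultaneously nonzero. Your write-up is merely a bit more explicit about why each relabelled factor is nonzero and about the use of the infiniteness of $K$ in the final step.
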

\begin{proof}
    Since $p \left(\vec{a},\vec{b},\vec{d}\right)\neq 0$ and hence $p(x,y,z)$ is a non-zero polynomial. Define 
    \begin{displaymath}
        q(x,y,z,w) = p(x,y,z) \cdot p(x,y,w) \cdot p(x,z,w) \cdot p(y,z,w).
    \end{displaymath}  
    Since $p(x,y,z), p(x,y,w), p(x,z,w)$ and $p(y,z,w)$ are non-zero polynomials and $K$ is a field, $q(x,y,z,w)$ will be a nonzero polynomial. Hence, there exists $\vec{a}',\vec{b}',\vec{c}',\vec{d}'\in K^n$ such that \autoref{eq:polyNonZero} holds.
\end{proof}

We are now ready to prove our main theorem of this section.

\begin{thm}\label{thm:mainThmFor_n=4}
    Suppose that $p(x_1,\ldots,x_n)$ is a polynomial with zero constant term in non commutative variables over an algebraically closed field $K$. Then one of the following statements must holds:
    \begin{enumerate}[(i)]
        \item If $\ord \ge 4$, then $p \left(\tk\right) = \{0\}$;
        \item If $\ord  = 3$, then $p \left(\tk\right)  = \tk^{(2)}$;
        \item If $\ord  = 2$, then $p \left(\tk\right)  = \tk^{(1)}$;    
        \item If $\ord  = 1$, then $p \left(\tk\right)  = \tk^{(0)}$.
        \item If $\ord  = 0$, then $p \left(\tk\right)$ is Zariski dense in $T_4(K)$.
    \end{enumerate}
\end{thm}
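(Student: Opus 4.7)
The plan is to treat the five cases in parallel, using \autoref{lem:polynomialMap} to read off the entries of $p(\vec{\mathbf{u}})$ and the fact that $\ord(p)=t$ encodes simultaneously $p\in\mathrm{Id}(T_t(K))$ and $p\notin\mathrm{Id}(T_{t+1}(K))$. Case (i) is immediate: $\ord\ge 4$ says $p\in\mathrm{Id}(\tk)$. For case (v), the diagonal entry $p(\avect{ii})$ of an image matrix is not identically zero as a polynomial in its arguments, so a generic choice of diagonals (together with the strictly upper-triangular parameters) produces points of $p(\tk)$ avoiding any prescribed proper Zariski-closed subset of $\tk$, giving density.

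For the containment $p(\tk)\subseteq \tk^{(t-1)}$ in the intermediate cases (ii)--(iv), I would bootstrap outward from the diagonal. The diagonal entries are $p$ evaluated on scalars, hence zero as soon as $\ord\ge 1$. For the $(l,l+1)$ entries, whose general form from \autoref{lem:polynomialMap} is $\sum_i p_i(\cdots)a_{l,l+1}(i)$, I would substitute tuples $u_1,\ldots,u_n$ living in an embedded $T_2(K)\subseteq\tk$ and use $p\in\mathrm{Id}(T_2(K))$ to conclude that each universal polynomial $p_i$ vanishes identically whenever $\ord\ge 2$. An analogous substitution on an embedded $T_3(K)$ kills the $p_{ij}$ when $\ord\ge 3$, wiping out the $(l,l+2)$ entries.

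For surjectivity, the non-identity property $p\notin\mathrm{Id}(T_{t+1}(K))$ supplies at least one nonzero component polynomial among $p_i, p_{ij}, p_{ijk}$. Using \autoref{lem:polyNonZero} and its iterated form (the product of finitely many nonzero polynomials over a field is nonzero), I would choose diagonal data $\avect{11},\avect{22},\avect{33},\avect{44}$ at which all the relevant component polynomials take nonzero values simultaneously. With these diagonals frozen, each non-vanishing super-diagonal entry of $p(\vec{\mathbf{u}})$ becomes a non-trivial multilinear expression in the strictly upper-triangular parameters $a_{l,l+s}(i)$, and an inductive pass starting from the lowest-index non-vanishing super-diagonal (where a single linear adjustment suffices) and proceeding outward realizes any prescribed element of $\tk^{(t-1)}$.

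The bottleneck I expect is surjectivity for $\ord\in\{1,2\}$: the $(1,4)$ entry is a sum of contributions mediated by several component polynomials, so the parameters $a_{12}(i),a_{23}(j),a_{34}(k),a_{13}(i),a_{24}(j)$ that had to be fixed to hit the lower super-diagonal targets reappear in the $(1,4)$ formula. The delicate step is to verify that enough freedom remains in $a_{14}(i)$ (and in the choice of diagonals, via \autoref{lem:polyNonZero}) to absorb those already-prescribed contributions and still adjust the $(1,4)$ entry to any element of $K$. Tracking this decoupling with care, together with the algebraic closedness of $K$, should close the argument.
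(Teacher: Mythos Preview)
Your plan is essentially the paper's own proof: it too reads off the entries via \autoref{lem:polynomialMap}, kills $p_i$ and $p_{ij}$ by restricting to embedded copies of $T_2(K)$ and $T_3(K)$ (citing the $n=2,3$ results), invokes \autoref{lem:polyNonZero} to pick diagonals where all needed component polynomials are simultaneously nonzero, and then solves for the strictly upper-triangular entries recursively from the lowest nonvanishing super-diagonal outward, with the $\ord=0$ case deferred to the general density argument. One small correction to your final paragraph: when $\ord=2$ the coefficient of $a_{14}(i)$ in the $(1,4)$ entry is $p_i(\avec{1,4})$, which is identically zero, so the free parameter for the $(1,4)$ adjustment is $a_{24}(\cdot)$ or $a_{13}(\cdot)$ rather than $a_{14}(\cdot)$; the paper accordingly splits into the sub-cases $x_{13}\neq 0$ and $x_{13}=0$ (and further into $i_0=j_0$ versus $i_0\neq j_0$) to pick the right lever.
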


\begin{proof}
    \begin{enumerate}[(i)]
        \item Suppose that $\ord \ge 4$. This implies, from the definition, that $p \left(\tk\right)=\{0\}$ and hence the statement (i) is proved.
       
        \item Suppose that $\ord=3$. This implies that $p\left(\tk[3]\right)=0$ but $p \left(\tk\right)\neq 0$. Let 
        \begin{displaymath}
             u_i = 
                   \begin{pmatrix}
                       a_{11}(i) & a_{12}(i) & a_{13}(i) & a_{14}(i) \\
                                 & a_{22}(i) & a_{23}(i) & a_{24}(i) \\
                                 &           & a_{33}(i) & a_{34}(i) \\
                                 &           &           & a_{44}(i)
                   \end{pmatrix} \in \tk,
        \end{displaymath}
        for $i=1,\ldots,n$. Then using \autoref{lem:polynomialMap}, \cite{WaZhLu21} ,and \cite[Lemma 3.1]{WaZhLu22}, we have
        \begin{displaymath}
            p \left(\vec{\mathbf{u}}\right) = 
            \begin{pmatrix}
                0 & 0 & 0 & \displaystyle\sum_{i,j,k}p_{ijk}\left(\avec{1,2,3,4}\right)(i,j,k)
                \\[2ex]
                & 0 & 0 & 0 \\
                &   & 0 & 0 \\
                &   &   & 0
            \end{pmatrix}.
        \end{displaymath}
        This implies that $p \left(\tk\right)\subseteq \tk^{(2)}$. Now we need to show that $\tk^{(2)}\subseteq p \left(\tk\right)$. Let 
        \begin{displaymath}
            X = 
             \begin{pmatrix}
                0 & 0 & 0 & x_{14} \\
                  & 0 & 0 & 0 \\
                  &   & 0 & 0 \\
                  &   &   & 0    
             \end{pmatrix} \in \tk^{(2)}. 
        \end{displaymath}  
        Since $p \left(\tk\right)\neq 0$, we can find $i_0,j_0,k_0\in \{1,\ldots,n\}$ such that 
        \begin{displaymath}
             p_{i_0j_0k_0}\left(\avec{1,2,3,4}\right)\neq 0,
        \end{displaymath} 
        for some $\avect{11},\avect{22},\avect{33},\avect{44}\in K^n$. We set
        \begin{displaymath}
             \lambda_0 \defeq p_{i_0j_0k_0}\left(\avec{1,2,3,4}\right).
        \end{displaymath}
        Suppose first that $i_0=j_0=k_0$. Take 
        \begin{displaymath}
             u_i =  \begin{cases}
                        \begin{pmatrix}
                            a_{11}(i) & 0         & 0         & 0 \\
                                        & a_{22}(i) & 0         & 0 \\
                                        &           & a_{33}(i) & 0 \\
                                        &           &           & a_{44}(i)
                        \end{pmatrix}, & \text{ if } i \neq i_0 
                        \\[1.5cm]
                        \begin{pmatrix}
                            a_{11}(i_0) & \lambda_0^{-1}x_{14}   & 0         & 0 \\
                                        & a_{22}(i_0)            & 1         & 0 \\
                                        &               & a_{33}(i_0) & 1 \\
                                        &               &           & a_{44}(i_0)
                        \end{pmatrix}, & \text{ if } i=i_0. 
                    \end{cases}
        \end{displaymath}
        Note that 
        \begin{displaymath}
            p \left(\vec{\mathbf{u}} \right) = 
            \begin{pmatrix}
                0 & 0 & 0 & p_{i_0j_0k_0}\left(\avec{1,2,3,4}\right)\lambda_0^{-1}x_{14} \cdot 1\cdot 1
                \\[2ex]
                & 0 & 0 & 0 \\
                &   & 0 & 0 \\
                &   &   & 0
            \end{pmatrix} = X.
        \end{displaymath}
        Thus, $p \left(\tk\right)\subseteq  \tk^{(2)}$. Hence, we proved that $p \left(\tk\right) = \tk^{(2)}$. Now we assume that $i_0\neq j_0=k_0$. Take
        \begin{displaymath}
             u_i = 
             \begin{cases}
                \begin{pmatrix}
                    a_{11}(i) & 0         & 0         & 0 \\
                                & a_{22}(i) & 0         & 0 \\
                                &           & a_{33}(i) & 0 \\
                                &           &           & a_{44}(i)
                \end{pmatrix}, & \text{ if } i \neq i_0,j_0 
                \\[1.5cm]
                \begin{pmatrix}
                    a_{11}(i_0) & \lambda_0^{-1}x_{14}   & 0         & 0 \\
                                & a_{22}(i_0)            & 0         & 0 \\
                                &               & a_{33}(i_0) &  \\
                                &               &           & a_{44}(i_0)
                \end{pmatrix}, & \text{ if } i=i_0
                \\[1.5cm]
                \begin{pmatrix}
                    a_{11}(j_0) & 0   & 0         & 0 \\
                                & a_{22}(j_0)            & 1         & 0 \\
                                &               & a_{33}(j_0) & 1 \\
                                &               &           & a_{44}(j_0)
                \end{pmatrix}, & \text{ if } i=j_0.
             \end{cases}
        \end{displaymath}
        So we have
        \begin{displaymath}
            p \left(\vec{\mathbf{u}} \right) = 
            \begin{pmatrix}
                0 & 0 & 0 & p_{i_0j_0k_0}\left(\avec{1,2,3,4}\right)\lambda_0^{-1}x_{14} \cdot 1\cdot 1
                \\[2ex]
                & 0 & 0 & 0 \\
                &   & 0 & 0 \\
                &   &   & 0
            \end{pmatrix} = X.
        \end{displaymath}
        Hence, $p \left(\tk\right) = \tk^{(2)}$. If $i_0\neq j_0\neq k_0$, then take 
        \begin{displaymath}
            u_{j_0} = 
            \begin{pmatrix}
                a_{11}(j_0) & 0   & 0         & 0 \\
                            & a_{22}(j_0)            & 1         & 0 \\
                            &               & a_{33}(j_0) & 0 \\
                            &               &           & a_{44}(j_0)
            \end{pmatrix}, \text{ and }
            u_{k_0} = 
            \begin{pmatrix}
                a_{11}(k_0) & 0   & 0         & 0 \\
                            & a_{22}(k_0)            &          & 0 \\
                            &               & a_{33}(k_0) & 1 \\
                            &               &           & a_{44}(k_0)
            \end{pmatrix}
        \end{displaymath}
        and remaining $u_i's$ are same. Hence, $p \left(\tk\right) = \tk^{(2)}$.

        \item Suppose that $\ord=2$. This implies that $p\left(\tk[2]\right)=0$ but $p \left(\tk[3]\right)\neq 0$. Let 
        \begin{displaymath}
             u_i = 
                   \begin{pmatrix}
                       a_{11}(i) & a_{12}(i) & a_{13}(i) & a_{14}(i) \\
                                 & a_{22}(i) & a_{23}(i) & a_{24}(i) \\
                                 &           & a_{33}(i) & a_{34}(i) \\
                                 &           &           & a_{44}(i)
                   \end{pmatrix} \in \tk,
        \end{displaymath}
        for $i=1,\ldots,n$. Then using \autoref{lem:polynomialMap} and \cite{WaZhLu21}, we have
        \begin{displaymath}
            p \left(\vec{\mathbf{u}}\right) = 
            \begin{pmatrix}
                0 
                & 0 
                & \displaystyle \sum_{i,j} p_{ij}\left(\avec{1,2,3}\right)(i,j)
                & \displaystyle\sum_{i,j}p_{ij}\left(\avec{1,2,4}\right)(i,j)
                \\
                & 
                & 
                & + \displaystyle\sum_{i,j}p_{ij}\left(\avec{1,3,4}\right)(i,j)
                \\\
                & 
                & 
                & + \displaystyle\sum_{i,j,k}p_{ijk}\left(\avec{1,2,3,4}\right)(i,j,k)
                \\[3ex]
                & 0 
                & 0 
                & \displaystyle\sum_{i,j}p_{ij}\left(\avec{2,3,4}\right)(i,j)
                \\
                &   
                & 0 
                & 0 
                \\
                &   
                &   
                & 0
            \end{pmatrix}.
        \end{displaymath}
    This implies that $p \left(\tk\right)\subseteq \tk^{(1)}$. For showing the other side inclusion, take
    \begin{displaymath}
        X = 
        \begin{pmatrix}
            0 & 0 & x_{13} & x_{14} \\
              & 0 &   0    & x_{24} \\
              &   &   0    &    0   \\
              &   &        &    0
        \end{pmatrix}\in T_4(K)^{(1)}.
    \end{displaymath} 
    Since $p \left(T_3(K)\right)\neq 0$, there exist $i_0,j_0\in \left\{1,\ldots,n\right\}$ such that 
    \begin{displaymath}
        p_{i_0j_0} \left(\avect{1,2,4}\right)\neq 0
    \end{displaymath} 
    for some $\avect{11},\avect{22},\avect{44}\in K^n$. By using \autoref{lem:polyNonZero} we can find $\avect{11}',\avect{22}',\avect{33}',\avect{44}'\in K^n$ such that 
    \begin{align*}
        p_{i_0j_0}\left(\avec{1,2,3}'\right) \neq 0 \\
        p_{i_0j_0}\left(\avec{1,2,4}'\right) \neq 0 \\ 
        p_{i_0j_0}\left(\avec{1,3,4}'\right) \neq 0 \\
        p_{i_0j_0}\left(\avec{2,3,4}'\right) \neq 0.
    \end{align*}
    We set 
    \begin{align*}
        \lambda_1 = p_{i_0j_0}\left(\avec{1,2,4}'\right);\\
        \lambda_2 = p_{i_0j_0}\left(\avec{1,2,3}'\right);\\
        \lambda_3 = p_{i_0j_0}\left(\avec{2,3,4}'\right);\\
        \lambda_4 = p_{i_0j_0}\left(\avec{1,3,4}'\right).
    \end{align*}
    We first assume that $i_0=j_0$. Furthermore, we divide our proof into a couple of cases. 
    \bigskip
    
    \noindent \textbf{Case I: }$x_{13} \neq 0$. We take 
    \begin{displaymath}
        u_i = 
        \begin{cases}
            \diag \left(a_{11}(i),a_{22}(i),a_{33}(i),a_{44}(i)\right), & i\neq i_0
            \\[2ex]
            \begin{pmatrix}
                a_{11}\left(i\right) & \lambda_2^{-1}x_{13} & 0 & 0 
                \\
                & a_{22} \left(i\right) & 1 & \lambda_1^{-1}\lambda_2 x_{13}^{-1}\left(x_{14}-\alpha \lambda_2^{-1}\lambda_3^{-1} x_{13}\textcolor{black}{x_{24}} \right) 
                \\
                &   & a_{33}\left(i\right) & \lambda_3^{-1}\textcolor{black}{x_{24}}
                \\
                &   &   & a_{44}\left(i\right)
            \end{pmatrix}, & i=i_0
        \end{cases},
    \end{displaymath} 
    where $\alpha = p_{i_0i_0i_0} \left(\avec{1,2,3,4}'\right)$. Then we have
    \begin{align*}
        p \left({\vec{\mathbf{u}}}\right)_{13} & = p_{i_0i_0}(\avec{1,2,3})a_{12}(i_0)a_{23}(i_0)
        \\
        & = \lambda_2 \cdot \lambda_2^{-1}x_{13},
        \\
        & = x_{13}
        \\
        p \left({\vec{\mathbf{u}}}\right)_{24} & = p_{i_0i_0}(\avec{2,3,4})a_{23}(i_0)a_{34}(i_0)
        \\
        & = \lambda_3 \cdot 1 \cdot \lambda_3^{-1}x_{24} 
        \\
        & = x_{24}
        \\
        p \left({\vec{\mathbf{u}}}\right)_{14} & = p_{i_0i_0}(\avec{1,2,4})a_{12}(i_0)a_{24}(i_0) + p_{i_0i_0}(\avec{1,3,4})a_{13}(i_0)a_{34}(i_0)
        \\ 
        & + p_{i_0i_0i_0}(\avec{1,2,3,4})a_{12}(i_0)a_{23}(i_0)a_{34}(i_0)
        \\
        & = \lambda_1 \lambda_2^{-1}x_{13} \lambda_1^{-1}\lambda_2 x_{13}^{-1}\left(x_{14}-\alpha \lambda_2^{-1}\lambda_3^{-1} x_{13}\textcolor{black}{x_{24}} \right) + 0 
        \\
        & + \alpha \lambda_2^{-1}x_{13}\cdot 1 \cdot \lambda_3^{-1}x_{24}
        \\
        & = x_{14}
    \end{align*}

    \noindent \textbf{Case II: }$x_{13} = 0$. Take 
    \begin{displaymath}
        u_i = 
        \begin{cases}
            \diag \left(a_{11}(i),a_{22}(i),a_{33}(i),a_{44}(i)\right), & i\neq i_0
            \\[2ex]
            \begin{pmatrix}
                a_{11}\left(i\right) & 0 & \lambda_4^{-1}x_{14} & 0 
                \\
                & a_{22} \left(i\right) & \lambda_3^{-1}\textcolor{black}{x_{24}} & 0 
                \\
                &   & a_{33}\left(i\right) & 1 \\
                &   &   & a_{44}\left(i\right)
            \end{pmatrix}, & i=i_0
        \end{cases}.
    \end{displaymath} 
    In this case also note that $p \left(\vec{\mathbf{u}}\right)=X$. This proves that $T_4(K)^{(1)}\subseteq p \left(T_4(k)\right)$.
    \bigskip

    \noindent Now we assume that $i_0\neq j_0$, and again we divide our proof into a couple of cases.

    \noindent \textbf{Case I: }$x_{13} \neq 0$. We can assume that $p_{i i}$ are identically zero polynomials for all $i$. Otherwise, it will reduce to the $i=j$ case. Now we take 
    \begin{displaymath}
        u_i = 
        \begin{cases}
            \diag \left(a_{11}(i),a_{22}(i),a_{33}(i),a_{44}(i)\right), & i\neq i_0, j_0
            \\[2ex]
            \begin{pmatrix}
                a_{11}\left(i\right) & \lambda_2^{-1}x_{13} & 0 & 0 
                \\
                & a_{22} \left(i\right) & 1 & 0 
                \\
                &   & a_{33}\left(i\right) & 0 
                \\
                &   &   & a_{44}\left(i\right)
            \end{pmatrix}, & i=i_0
            \\[3ex]
            \begin{pmatrix}
                a_{11}\left(i\right) & 0 & 0 & 0 
                \\
                & a_{22} \left(i\right) & 1 & \gamma 
                \\
                &   & a_{33}\left(i\right) & \lambda_3^{-1}x_{24} 
                \\
                &   &   & a_{44}\left(i\right)
            \end{pmatrix}, & i=j_0
        \end{cases},
    \end{displaymath} 
    where $\gamma = \lambda_1^{-1} \lambda_2 x_{13}^{-1}(x_{14}-\beta _{14})$, for a suitable choice of $\beta _{14}$. Then we have with the given choice that $p \left(\mathbf{\vec{u}}\right)=X$. 

    \noindent \textbf{Case II: }$x_{13} = 0$. Take 
    \begin{displaymath}
        u_i = 
        \begin{cases}
            \diag \left(a_{11}(i),a_{22}(i),a_{33}(i),a_{44}(i)\right), & i\neq i_0
            \\[2ex]
            \begin{pmatrix}
                a_{11}\left(i\right) & 0 & \lambda_4^{-1}x_{14} & 0 \\
                    & a_{22} \left(i\right) & \lambda_3^{-1}x_{24} & 0 \\
                    &   & a_{33}\left(i\right) & 0 \\
                    &   &   & a_{44}\left(i\right)
            \end{pmatrix}, & i=i_0
            \\[2ex]
            \begin{pmatrix}
                a_{11}\left(i\right) & 0 & 0 & 0 \\
                    & a_{22} \left(i\right) & 0 & 0 \\
                    &   & a_{33}\left(i\right) & 1 \\
                    &   &   & a_{44}\left(i\right)
            \end{pmatrix}, & i=j_0
        \end{cases}.
    \end{displaymath} 
    Thus, we have $p \left(\vec{\mathbf{u}}\right)=X$ and hence $T_4(K)^{(1)}\subseteq p \left(T_4(k)\right)$.

        \item Suppose that $\ord=1$. This implies that $p\left(\tk[1]\right)=0$ but $p \left(\tk[2]\right)\neq 0$. Let 
        \begin{displaymath}
             u_i = 
                   \begin{pmatrix}
                       a_{11}(i) & a_{12}(i) & a_{13}(i) & a_{14}(i) \\
                                 & a_{22}(i) & a_{23}(i) & a_{24}(i) \\
                                 &           & a_{33}(i) & a_{34}(i) \\
                                 &           &           & a_{44}(i)
                   \end{pmatrix} \in \tk,
        \end{displaymath}
        for $i=1,\ldots,n$. We have
        \begin{displaymath}
            p \left(\vec{\mathbf{u}}\right) = 
            \begin{pmatrix}
                0
                & \displaystyle\sum_i p_i \left(\avec{1,2}\right)(i) 
                & \displaystyle\sum_i p_i \left(\avec{1,3}\right)(i)
                & \displaystyle\sum_i p_i \left(\avec{1,4}\right)(i)
                \\
                & 
                & + \displaystyle\sum_{i,j}p_{ij}\left(\avec{1,2,3}\right)(i,j)
                & + \displaystyle\sum_{i,j}p_{ij}\left(\avec{1,2,4}\right)(i,j)
                \\ 
                & 
                & 
                & + \displaystyle\sum_{i,j}p_{ij}\left(\avec{1,3,4}\right)(i,j)
                \\ 
                &
                & 
                & + \displaystyle\sum_{i,j,k}p_{ijk}\left(\avec{1,2,3,4}\right)(i,j,k)
                \\[5ex]
                & 0 
                & \displaystyle\sum_i p_i \left(\avec{2,3}\right)(i) 
                & \displaystyle\sum_i p_i \left(\avec{2,4}\right)(i) 
                \\
                &
                & 
                & +\displaystyle\sum_{i,j}p_{ij}\left(\avec{2,3,4}\right)(i,j)
                \\[5ex]
                &   
                & 0
                & \displaystyle\sum_i p_i \left(\avec{3,4}\right)(i) 
                \\[5ex]
                & 
                & 
                &  0
            \end{pmatrix},
        \end{displaymath}
    This implies that $p \left(\tk\right)\subseteq \tk^{(0)}$. For showing the other side inclusion, take
    \begin{displaymath}
        X = 
        \begin{pmatrix}
            0 & x_{12} & x_{13} & x_{14} \\
              & 0      &   x_{23}    & x_{24} \\
              &        &   0         & x_{34}   \\
              &        &             &    0
        \end{pmatrix}\in \tk^{(0)}.
    \end{displaymath} 
    Since $p \left(T_3(K)\right)\neq 0$, there exist $i_0\in \left\{1,\ldots,n\right\}$ such that 
    \begin{displaymath}
        p_{i_0} \left(\avect{1,4}\right)\neq 0
    \end{displaymath} 
    for some $\avect{11},\avect{44}\in K^n$. By using a similar argument given in the \autoref{lem:polyNonZero} we can find $\avect{11}',\avect{22}',\avect{33}',\avect{44}'\in K^n$ such that 
    \begin{align*}
        & p_{i_0}\left(\avec{1,2}'\right) \neq 0,~ p_{i_0}\left(\avec{1,3}'\right) \neq 0,~p_{i_0}\left(\avec{2,3}'\right) \neq 0, 
        \\
        & p_{i_0}\left(\avec{1,4}'\right) \neq 0,~p_{i_0}\left(\avec{2,4}'\right) \neq 0 \text{ and } p_{i_0}\left(\avec{3,4}'\right) \neq 0.
    \end{align*}
    We set 
    \begin{align*}
        \lambda_{\alpha \beta} = p_{i_0} \left(\avec{\alpha,\beta}'\right).
    \end{align*}
    Take 
    \begin{displaymath}
        u_{i_0} = 
        \begin{pmatrix}
            0 
            & \lambda_{12}^{-1}x_{12} 
            & \lambda_{13}^{-1} \left(x_{13}-\alpha_{123}\lambda_{12}^{-1}\lambda_{23}^{-1}x_{12}x_{23}\right)
            & \lambda_{14}^{-1} \left(x_{14}-\alpha_{124}\lambda_{12}^{-1}x_{12} \right.
            \\
            
            &
            &
            & \cdot \lambda_{24}^{-1} \left(x_{24}-\alpha_{234}\lambda_{23}^{-1}\lambda_{34}^{-1}x_{23}x_{34}\right)
            \\
            
            &
            &
            & -\alpha_{134}\left(\lambda_{13}^{-1} \left(x_{13}-\alpha_{123}\lambda_{12}^{-1}\lambda_{23}^{-1}x_{12}x_{23}\right)\right.
            \\
            
            &
            &
            & \cdot \left.\lambda_{34}^{-1}x_{34} \right)-\alpha_{1234}\left(\lambda_{12}^{-1}x_{12}\lambda_{23}^{-1}x_{23}\right.
            \\
            
            &
            &
            & \left.\left.\lambda_{34}^{-1}x_{34}\right)\right)
            \\[2ex]
            
            & 0
            & \lambda_{23}^{-1}x_{23} 
            & \lambda_{24}^{-1} \left(x_{24}-\alpha_{234}\lambda_{23}^{-1}\lambda_{34}^{-1}x_{23}x_{34}\right)
            \\[2ex]
            
            & 
            & 0
            & \lambda_{34}^{-1}x_{34} 
            \\[2ex]
            
            & 
            & 
            & 0
            \\
        \end{pmatrix}
    \end{displaymath} 
    and
    \begin{displaymath}
        u_i = \diag \left(a_{11}(i),a_{22}(i),a_{33}(i),a_{44}(i)\right).
    \end{displaymath} 
    Then $p(\vec{\mathbf{u}})=X$ and hence the conclusion is proved.
    \item This result is proved for general $m$ in \autoref{lem:ZariskiDense}. We will not mention it here for refraining us from repeating arguments.
    \end{enumerate}
\end{proof}
\section{Multi index \texorpdfstring{$p$}{p}-inductive polynomials}\label{sec:multiIndex}

To generalize the \autoref{lem:product} in case of upper triangular matrix of order $n$, we give the following algorithm. Let $u_i\in T_n(R)$ where $R$ is a commutative polynomial algebra with unity and $\lambda_{\alpha \beta}$ denotes the $(\alpha \beta)^{\text{th}}$ entry of $\Lambda_k(I)=\prod_{j=1}^k u_{i_j}$, where $I = \left(i_1,i_2,\ldots,i_k\right)$. Take $\gamma=\min\{\beta-\alpha,k\}$. Now we will be finding all possible monomials contributing to the entry corresponding the smaller upper triangular matrices of size $2\le\epsilon\le \gamma+1$. 

\noindent \textbf{Step 1:} Set $\epsilon=2$, then the $2\times 2$ upper triangular matrix coming from $u_i$ is given by 
\begin{displaymath}
    \begin{pmatrix}
        a_{\alpha \alpha}(i) & a_{\alpha \beta}(i) \\
                             & a_{\beta \beta}(i)
    \end{pmatrix}.
\end{displaymath}
The monomial corresponding to $\epsilon=2$ will be given by $P_{\alpha \beta}^{(1)} \left(\Lambda_k(I)\right)$
\begin{equation}\label{eq:step-1}
    \sum_{1\le s_1\le k} \left(\prod_{j=1}^{s_1-1}a_{\alpha \alpha}(i_j)\right)a_{\alpha \beta}(i_{s_1})\left(\prod_{j=s_1+1}^{k}a_{\beta \beta}(i_j)\right).
\end{equation} 

\noindent \textbf{Step 2:}  Set $\epsilon=3$, then the $3\times 3$ upper triangular matrices from $u_i$ is given by
\begin{displaymath}
    \begin{pmatrix}
        a_{\alpha \alpha}(i) & a_{\alpha \eta}(i) & a_{\alpha \beta}(i) \\
        & a_{\eta \eta}(i) & a_{\eta \beta}(i)\\
        &                  & a_{\beta \beta}(i)
    \end{pmatrix}.
\end{displaymath}
Choose $\eta$ such that $\alpha < \eta < \beta$, then the monomial corresponding to $\epsilon=3$  is given by $P_{\alpha \beta}^{(2)}\left(\Lambda_k(I)\right)$ 
\begin{equation} \label{eq:step-2}
    \sum_{\alpha< \eta < \beta}\left[\sum_{1\le s_1<s_2\le k}\left(\prod_{j=1}^{s_1-1}a_{\alpha \alpha}(i_j)\right) a_{\alpha \eta}(i_{s_1}) \left(\prod_{j=s_1+1}^{s_2-1}a_{\eta \eta}(i_j)\right)a_{\eta \beta}(i_{s_2})  \left(\prod_{j=s_2+1}^{k}a_{\beta \beta }(i_j)\right)\right].
\end{equation}

\noindent \textbf{Step r:}  Set $\epsilon=r+1$, then the $(r+1)\times(r+1)$ upper triangular matrices from $u_i$ is given by
\begin{displaymath}
    \begin{pmatrix}
        a_{\alpha \alpha}(i) & a_{\alpha \eta_1}(i) & \cdots & a_{\alpha\eta_r}(i) & a_{\alpha \beta}(i) \\
        & a_{\eta_1 \eta_1}(i) & \cdots & a_{\eta_1\eta_r}(i) & a_{\eta_1 \beta}(i)\\
        &  & \ddots & \vdots & \vdots\\
        &    &           &   & a_{\beta \beta}(i)
    \end{pmatrix}.
\end{displaymath}
Choose $\eta_1,\eta_2,\ldots,\eta_r$ such that $\alpha < \eta_1 < \eta_2 < \cdots <\eta_r < \beta$, then the monomial corresponding to $\epsilon=r+2$  is given by $P_{\alpha \beta}^{(r)}\left(\Lambda_k(I)\right)$
\begin{equation} \label{eq:step-r}
    \begin{gathered}
        \sum_{\alpha< \eta_1 < \cdots <\eta_r < \beta}\left[\sum_{1\le s_1<\cdots<s_{r+1}\le k}
        \left(\prod_{j=1}^{s_1-1}a_{\alpha \alpha}(i_j)\right) a_{\alpha \eta_1}(i_{s_1}) \left(\prod_{j=s_1+1}^{s_2-1}a_{\eta_1 \eta_1}(i_j)\right)
        a_{\eta_1 \eta_2}(i_{s_2})\times \cdots\right.
        \\
        \left.\times \left(\prod_{j=s_{r}+1}^{s_{r+1}-1}a_{\eta_{r} \eta_{r}}(i_j)\right)
        a_{\eta_{r} \beta}(i_{s_{r+1}})  \left(\prod_{j=s_{r+1}+1}^{k}a_{\beta \beta }(i_j)\right)\right].
    \end{gathered}
\end{equation}

\noindent Since $\gamma$ is finite, we have finitely many steps. Now the final step is given as follows;

\noindent \textbf{Step $\gamma+2$:}  Add all the monomials together to obtain the entry $\lambda_{\alpha\beta}$. 
\begin{example}
To have a better understanding of the algorithm we present a diagrammatic view of the monomials. Choose $n=6$, $\alpha=1$ and $\beta=6$. Then to compute the $16$-th entry of the product $\prod_{i=1}^{7}u_i$ we proceed as follows. 
\begin{figure}[H]
    \centering
    \includegraphics[width=0.22\textwidth]{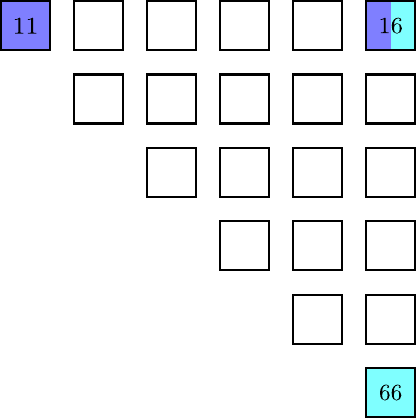}
    \caption{Combinations for $2$-fold product\label{fig:prodTwo}}
\end{figure}
\begin{figure}[H]
    \centering
    \includegraphics[width=\textwidth]{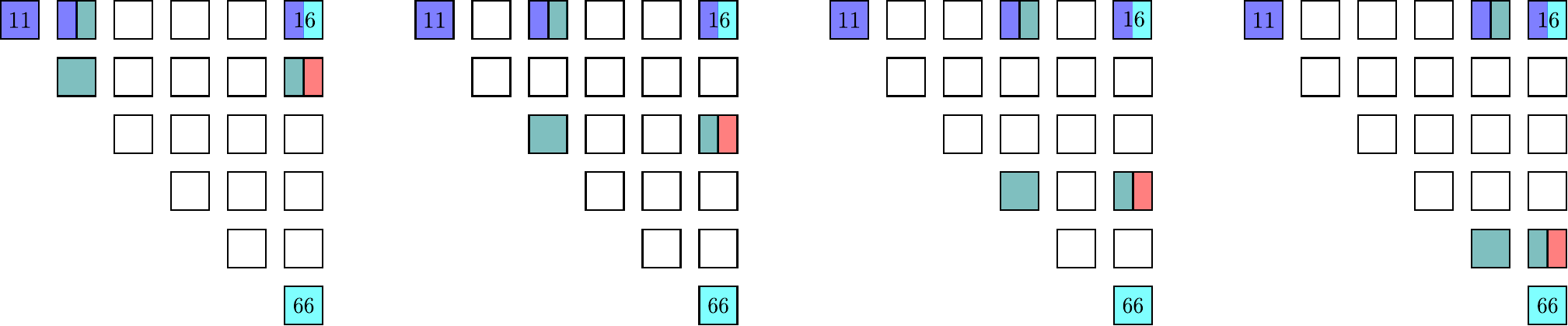}
    \caption{Combinations for $3$-fold product\label{fig:prodThree}}
\end{figure}

\begin{figure}[H]
    \centering
    \includegraphics[width=\textwidth]{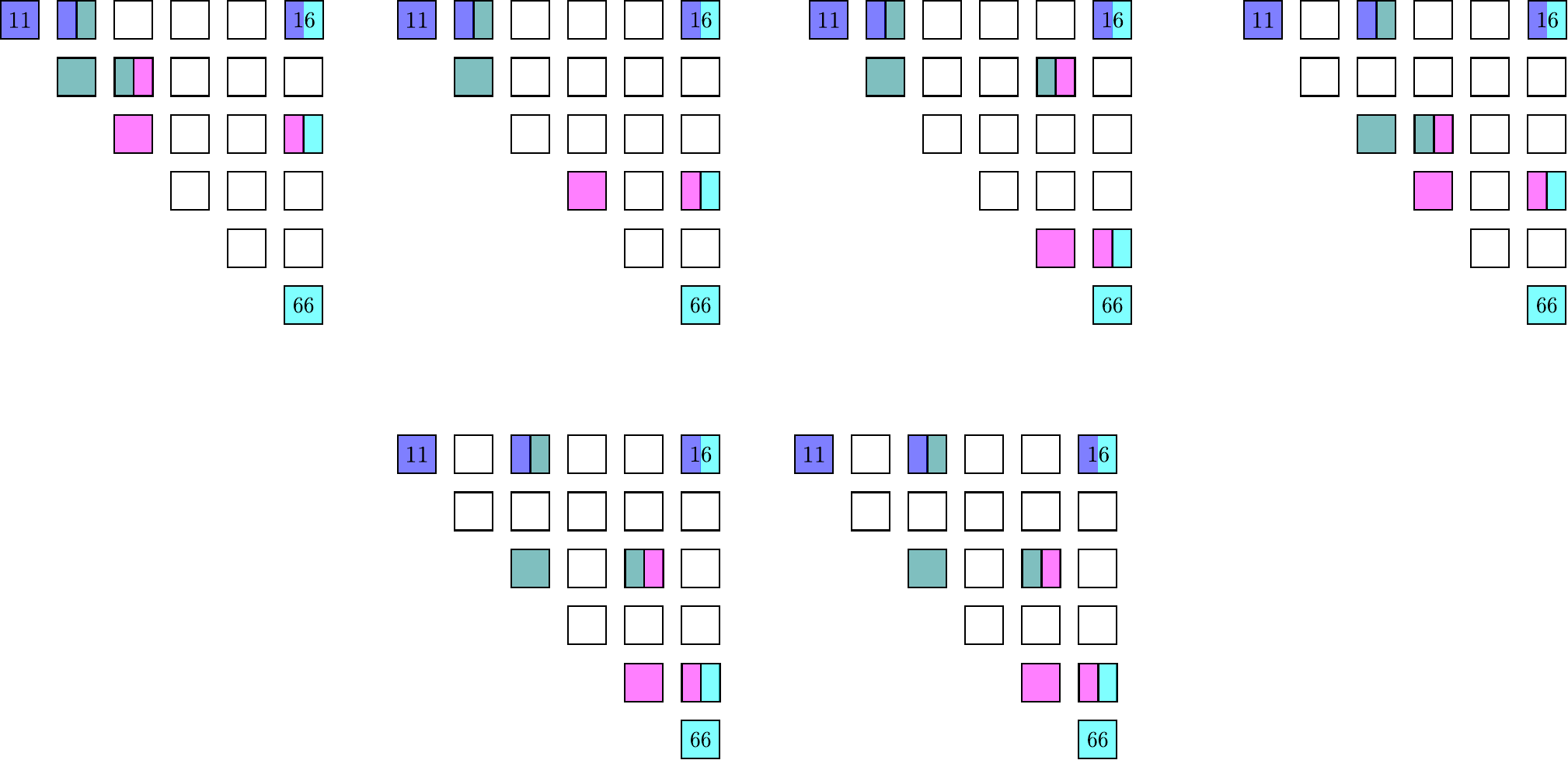}
    \caption{Combinations for $4$-fold product\label{fig:prodFour}}
\end{figure}

\begin{figure}[H]
    \centering
    \includegraphics[width=\textwidth]{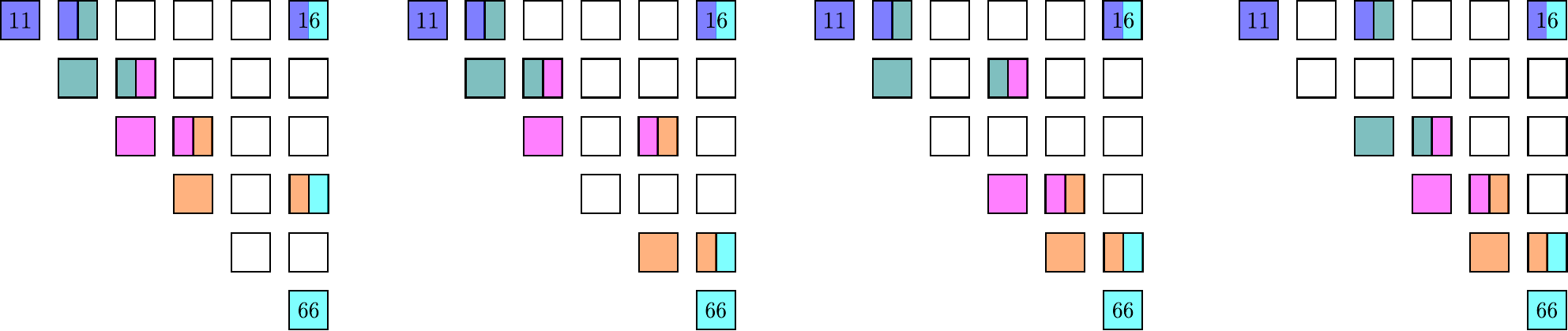}
    \caption{Combinations for $5$-fold product\label{fig:prodFive}}
\end{figure}

\begin{figure}[H]
    \centering
    \includegraphics[width=0.22\textwidth]{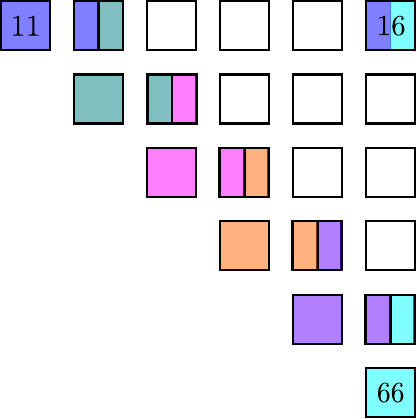}
    \caption{Combinations for $6$-fold product\label{fig:prodSix}}
\end{figure}
\noindent Note that there will not be any term involving $7$-fold product since $\gamma=5$.
\end{example}
Now we present the following proposition.
\begin{proposition}\label{prop:generalizedMultiplication}
    Let $u_i\in T_n(R)$ be matrices with $(u_i)_{uv}=a_{uv}(i)$, for $i=1,2,\ldots,k$. If $\lambda_{\alpha\beta}$ denotes the 
    $\alpha\beta^{\text{th}}$ entry of the product $\prod\limits_{j=1}^ku_{l_j}$, then 
    \begin{align*}
        \lambda_{\alpha\beta}=\sum\limits_{r=1}^{\beta-\alpha}P_{\alpha \beta}^{(r)} \left(\prod\limits_{j=1}^ku_{l_j}\right).
    \end{align*}
\end{proposition}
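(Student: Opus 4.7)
The natural approach is induction on $k$, generalizing the bookkeeping already carried out for $n=4$ in \autoref{lem:product}. For $k=1$ the formula reduces to $\lambda_{\alpha\beta}=a_{\alpha\beta}(l_1)=P^{(1)}_{\alpha\beta}$, and all higher-order terms $P^{(r)}_{\alpha\beta}$ vanish because the summation range $1\le s_1<\cdots<s_{r+1}\le 1$ is empty for $r\ge 1$. I would actually take $k=2$ as the explicit base case (to mirror the lemma for $\tk$), where the expansion
\begin{displaymath}
    (u_{l_1}u_{l_2})_{\alpha\beta}=\sum_{\eta=\alpha}^{\beta}a_{\alpha\eta}(l_1)\,a_{\eta\beta}(l_2)
\end{displaymath}
can be split into the term $\eta=\alpha$ and $\eta=\beta$ (giving $P^{(1)}_{\alpha\beta}$) plus the terms $\alpha<\eta<\beta$ (giving $P^{(2)}_{\alpha\beta}$).

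For the inductive step, assume the formula holds for products of length $k$ and expand
\begin{displaymath}
    \left(\prod_{j=1}^{k+1}u_{l_j}\right)_{\alpha\beta}=\sum_{\eta=\alpha}^{\beta}\Bigl(\prod_{j=1}^{k}u_{l_j}\Bigr)_{\alpha\eta}\,a_{\eta\beta}(l_{k+1}).
\end{displaymath}
The plan is to classify the resulting monomials by where the final factor $a_{\eta\beta}(l_{k+1})$ sits. The boundary terms $\eta=\beta$ contribute $\lambda^{(k)}_{\alpha\beta}\,a_{\beta\beta}(l_{k+1})$, which by the inductive hypothesis adjoins a factor $a_{\beta\beta}(l_{k+1})$ to every monomial in $\sum_{r}P^{(r)}_{\alpha\beta}$ at level $k$; these are exactly the monomials at level $k+1$ with $s_{r+1}\le k$. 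The boundary term $\eta=\alpha$ contributes $\lambda^{(k)}_{\alpha\alpha}\,a_{\alpha\beta}(l_{k+1})=\bigl(\prod_{j=1}^k a_{\alpha\alpha}(l_j)\bigr)a_{\alpha\beta}(l_{k+1})$, which is the unique level-$(k+1)$ summand in $P^{(1)}_{\alpha\beta}$ with $s_1=k+1$.

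For each intermediate $\alpha<\eta<\beta$, the contribution $\lambda^{(k)}_{\alpha\eta}\,a_{\eta\beta}(l_{k+1})$ expands, by the inductive hypothesis, as $\sum_{r'=1}^{\eta-\alpha}P^{(r')}_{\alpha\eta}\cdot a_{\eta\beta}(l_{k+1})$. Each such monomial at level $k$ in $P^{(r')}_{\alpha\eta}$ is indexed by intermediate indices $\alpha<\eta_1<\cdots<\eta_{r'-1}<\eta$ and positions $1\le s_1<\cdots<s_{r'}\le k$; appending $\eta$ to the index chain and $s_{r'+1}=k+1$ to the position chain produces precisely the level-$(k+1)$ summand of $P^{(r'+1)}_{\alpha\beta}$ with $s_{r'+1}=k+1$ and penultimate index $\eta$. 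Summing over $\eta$ and $r'$ then accounts for all level-$(k+1)$ summands in which the topmost position is $s_{r+1}=k+1$ and there is at least one intermediate index.

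Combining the three cases partitions the level-$(k+1)$ summands of $\sum_{r=1}^{\beta-\alpha}P^{(r)}_{\alpha\beta}$ according to whether $s_{r+1}<k+1$ (boundary $\eta=\beta$) or $s_{r+1}=k+1$ (boundary $\eta=\alpha$ when $r=1$ and no intermediate indices, or an interior $\eta$ otherwise), and no monomial is double-counted. The main bookkeeping obstacle is simply to verify that this reindexing is a bijection, which is mechanical once one recognizes $\eta$ as the largest intermediate index $\eta_r$ in the level-$(k+1)$ monomial. This completes the induction and proves the proposition.
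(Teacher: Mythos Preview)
Your argument is correct and is precisely the approach the paper intends: the paper's own proof consists of the single sentence ``As this can be proved by induction on $k$ as in the case $n=4$, this is left to the reader,'' and your write-up is exactly that induction, carrying out for general $n$ the same case split (last factor diagonal vs.\ off-diagonal, boundary vs.\ interior $\eta$) that \autoref{lem:product} did for $n=4$. One cosmetic point: your index bookkeeping in the $k=1$ base case inherits the paper's own off-by-one inconsistency between its ``Step~1/Step~2'' and ``Step~$r$'' conventions (so that $P^{(1)}$ has one position $s_1$ while your parenthetical writes $s_{r+1}$ with $r\ge 1$); just settle on $P^{(r)}$ having $r$ positions $s_1<\cdots<s_r$ and $r-1$ intermediate indices, and the slip disappears.
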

\begin{proof}
    As this can be proved by induction on $k$ as in the case $n=4$, this is left to the reader.
\end{proof}

In this case $n=4$, each coordinate is determined by a family of multi-index polynomials. These polynomials $\{p_{\Vec{I}}\}$ solely
depend on the polynomial we started with, namely $p$. As evident from the general formula of the multiplication,
the general formula for an entry in the position with higher indices depends on smaller indices as shown in the following diagram.

\begin{figure}[H]
    \centering
    \includegraphics[width=0.3\textwidth]{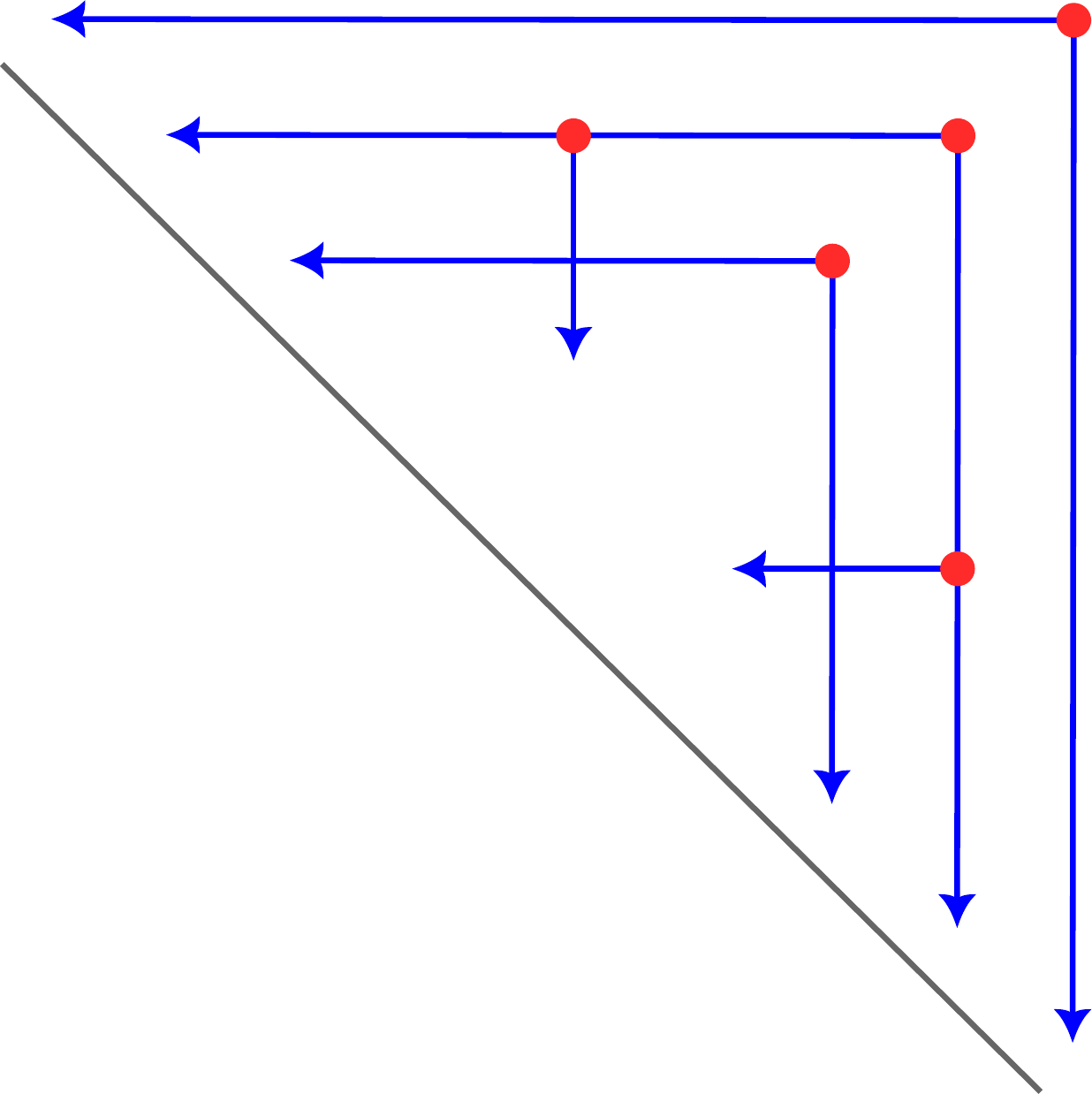}
    \caption{Submatrix\label{fig:submatrix}}
\end{figure}

This is true in general as evident from the following proposition.

\begin{lemma}
    For a polynomial $f\in R \left[F_n\right]$ let $P_{\alpha \beta}^{(r)}(f)$ denote the $\alpha \beta^{\text{th}}$ entry of $f \left(\vec{\mathbf{u}}\right)$  of weight $r$. Then the following statements are true for two monomials $g,h\in F_n$.
    \begin{enumerate}[(i)]
        \item $P_{\alpha \beta}^{(r)}(g+h) = P_{\alpha \beta}^{(r)}(g) + P_{\alpha \beta}^{(r)}(h)$.
        \item For any $\delta\in K$, $P_{\alpha \beta}^{(r)}(\delta g) = \delta P_{\alpha \beta}^{(r)}(g)$. 
    \end{enumerate}    
\end{lemma}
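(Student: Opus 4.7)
The plan is to unwind the definition of $P_{\alpha\beta}^{(r)}$ on monomials and then reduce the two claims to the linearity of two elementary operations: polynomial evaluation and matrix entry extraction. Concretely, for a monomial (word) $g=x_{i_1}\cdots x_{i_k}\in F_n$, the quantity $P_{\alpha\beta}^{(r)}(g)$ is the Step-$r$ contribution furnished by \autoref{prop:generalizedMultiplication} applied to $g(\vec{\mathbf{u}})=u_{i_1}\cdots u_{i_k}$, namely the explicit sum \eqref{eq:step-r} over chains $\alpha<\eta_1<\cdots<\eta_{r-1}<\beta$ and position tuples $1\le s_1<\cdots<s_r\le k$. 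For an arbitrary element of $K[F_n]$ I define $P_{\alpha\beta}^{(r)}$ by $K$-linear extension; both conclusions then amount to checking that this extension is compatible with the intended interpretation, i.e.\ the weight-$r$ piece of the $(\alpha,\beta)$-entry of the evaluated matrix.

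For (i), first I would use that the evaluation map $\Phi_{\vec{\mathbf{u}}}:K[F_n]\to T_n(R)$ is a $K$-algebra homomorphism, hence additive, giving $(g+h)(\vec{\mathbf{u}})=g(\vec{\mathbf{u}})+h(\vec{\mathbf{u}})$. Since matrix addition is entrywise, the $(\alpha,\beta)$-entry on the left is the sum of the $(\alpha,\beta)$-entries on the right. It then suffices to see that this additivity is preserved at the weight-$r$ level: each summand of \eqref{eq:step-r} arising from $g(\vec{\mathbf{u}})$ is a monomial in the generic entries $a_{pq}(i)$ of fixed path length $r$, and similarly for $h(\vec{\mathbf{u}})$, so placing the two contributions side by side in $R$ produces no cancellation across different monomials $g,h$ and no reshuffling between different weights. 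Summing the weight-$r$ parts separately yields the claim.

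For (ii), the evaluation map satisfies $(\delta g)(\vec{\mathbf{u}})=\delta\, g(\vec{\mathbf{u}})$; since scalar multiplication is also entrywise, each summand in \eqref{eq:step-r} for $g$ is merely scaled by $\delta$, and the weight is unchanged, giving $P_{\alpha\beta}^{(r)}(\delta g)=\delta P_{\alpha\beta}^{(r)}(g)$. I do not anticipate any genuine obstacle here: the result is a formal reflection of the $K$-linearity of evaluation combined with the $K$-linearity of the explicit formula \eqref{eq:step-r} in the ``coefficient'' of the underlying word. The only point worth articulating carefully in the write-up is that $P_{\alpha\beta}^{(r)}$ on non-monomial polynomials is \emph{defined} as the weight-$r$ contribution to the $(\alpha,\beta)$-entry of the evaluated matrix, which is precisely the well-definedness that this lemma records under addition and scaling.
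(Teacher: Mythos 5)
Your argument is correct and rests on exactly the same observation the paper uses: matrix addition and scalar multiplication act entrywise (so $[(g+h)(\vec{\mathbf u})]_{\alpha\beta}=[g(\vec{\mathbf u})]_{\alpha\beta}+[h(\vec{\mathbf u})]_{\alpha\beta}$ and $[(\delta g)(\vec{\mathbf u})]_{\alpha\beta}=\delta[g(\vec{\mathbf u})]_{\alpha\beta}$), together with the fact that extracting the weight-$r$ homogeneous part of an element of $R$ is $K$-linear. The paper's own proof is just the first half of this in one line; your additional remark that the weight grading introduces no reshuffling or cancellation is a sensible (if slightly over-explained) elaboration of the same point rather than a different route.
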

\begin{proof}
    Note that for any two matrices $U,V\in \tk[m]$, $c\in K$ we have \begin{align*}
        & [U+V]_{\alpha \beta} = U_{\alpha \beta} + V_{\alpha \beta}, \\
        & [c U]_{\alpha \beta} = c U_{\alpha \beta}.
    \end{align*}
\end{proof}
\noindent The following proposition follows immediately using the above lemma.
\begin{proposition}\label{prop:polyEntry}
    Let $p\in R[F_n]$, $u_i\in T_m(R)$ for $1\leq i\le n$, then the $\alpha \beta^{\text{th}}$ entry of $p(\vec{\mathbf{u}})$ is given by the following expression:
    \begin{equation}\label{eq:polynomialUnderProd}
        \left[p \left(\vec{\mathbf{u}}\right)\right]_{\alpha \beta} = \sum_{r=1}^{\beta-\alpha} P_{\alpha \beta}^{(r)}(p).
    \end{equation}
    Here $P_{\alpha \beta}^{(r)}(p)$ is defined as follows. Suppose 
    \begin{displaymath}
        p = \sum_I \mu_Ix_{i_1}x_{i_2}\cdots x_{i_k},\quad I = \left(i_1,i_2,\ldots,i_k\right),
    \end{displaymath}
    then
    \begin{displaymath}
        P_{\alpha \beta}^{(r)}(p) = \sum_I \mu_I P_{\alpha \beta}^{(r)} \left(\Lambda_k(I)\right).
    \end{displaymath} 
\end{proposition}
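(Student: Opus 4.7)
The plan is to deduce this directly from \autoref{prop:generalizedMultiplication} and the entry-wise linearity established in the preceding lemma; there is no new combinatorics to perform, only a bookkeeping argument.

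First I would write $p$ in its expanded form $p = \sum_I \mu_I\, x_{i_1}x_{i_2}\cdots x_{i_k}$, where the multi-index $I=(i_1,\ldots,i_k)$ ranges over all tuples appearing with nonzero coefficient. Evaluating at $\vec{\mathbf{u}}$ gives
\begin{displaymath}
p(\vec{\mathbf{u}}) \;=\; \sum_I \mu_I\, \Lambda_k(I),
\end{displaymath}
since by definition $\Lambda_k(I) = \prod_{j=1}^k u_{i_j}$. The entry-extraction map $X \mapsto X_{\alpha\beta}$ on $T_m(R)$ is $R$-linear (this is exactly the content of the preceding lemma, items (i) and (ii), extended from monomials to arbitrary polynomials by a routine induction on the number of monomials). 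Applying this linearity yields
\begin{displaymath}
\bigl[p(\vec{\mathbf{u}})\bigr]_{\alpha\beta} \;=\; \sum_I \mu_I\, [\Lambda_k(I)]_{\alpha\beta}.
\end{displaymath}

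Next I would invoke \autoref{prop:generalizedMultiplication}, which decomposes the $\alpha\beta$-entry of any product $\Lambda_k(I)$ as a weight-graded sum:
\begin{displaymath}
[\Lambda_k(I)]_{\alpha\beta} \;=\; \sum_{r=1}^{\beta-\alpha} P_{\alpha\beta}^{(r)}\bigl(\Lambda_k(I)\bigr).
\end{displaymath}
Substituting this into the previous equation and swapping the finite sums over $I$ and over $r$ (permissible since both are finite) gives
\begin{displaymath}
\bigl[p(\vec{\mathbf{u}})\bigr]_{\alpha\beta} \;=\; \sum_{r=1}^{\beta-\alpha} \sum_I \mu_I\, P_{\alpha\beta}^{(r)}\bigl(\Lambda_k(I)\bigr) \;=\; \sum_{r=1}^{\beta-\alpha} P_{\alpha\beta}^{(r)}(p),
\end{displaymath}
where the last equality is precisely the definition of $P_{\alpha\beta}^{(r)}(p)$ supplied in the statement.

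There is no genuine obstacle here; the only thing to be careful about is that the upper bound $\beta-\alpha$ in the weight sum is uniform in $I$, because $P_{\alpha\beta}^{(r)}(\Lambda_k(I))$ is identically zero whenever $r > \min\{\beta-\alpha, k\}$ (this comes from the algorithm: a step-$r$ monomial requires choosing $r$ strictly increasing positions $s_1<\cdots<s_{r+1}$ among the $k$ factors, which is impossible when $r \geq k+1$, and also requires $r-1$ intermediate indices strictly between $\alpha$ and $\beta$, which is impossible when $r > \beta-\alpha$). Extending the sum up to $\beta-\alpha$ for every $I$ therefore only adds zero terms and is legitimate. This is the single point worth flagging in the write-up; everything else is formal.
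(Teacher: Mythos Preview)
Your proof is correct and follows exactly the paper's approach: invoke \autoref{prop:generalizedMultiplication} for each monomial $\Lambda_k(I)$ and then use linearity (the preceding lemma) to pass to the polynomial $p$. The paper's own proof is a one-line version of what you wrote, so your more detailed justification of the uniform upper bound $\beta-\alpha$ is a welcome clarification rather than a departure.
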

\begin{proof}
    Note that using \autoref{prop:generalizedMultiplication}, we have
    \begin{align*}
        \left[p \left(\vec{\mathbf{u}}\right)\right]_{\alpha \beta} & = \sum_{r=1}^{\beta-\alpha} P_{\alpha \beta}^{(r)}(p) = \sum_{r=1}^{\beta-\alpha} \sum_I \mu_I P_{\alpha \beta}^{(r)} \left(\Lambda_k(I)\right).
    \end{align*}
\end{proof}

\begin{defn}
    Given a polynomial $p$, the polynomials $P_{\alpha \beta}^{(r)} \left( p \right)$ in \autoref{prop:polyEntry} will be called as \emph{\mip}.
\end{defn}
Here we present two examples of \mip s.
\begin{example}
    Let $K$ be an algebraically closed field and  consider the following polynomial $p(x,y,z) = x^2+y+z\in K[x,y,z]$. Let 
    \begin{equation}\label{eq:ui}
        u_i = 
        \begin{pmatrix}
            a_i & b_i & c_i & d_i
            \\
                        & e_i & f_i & g_i
            \\
                        &           & h_i & j_i
            \\
                        &           &            & l_i
        \end{pmatrix}\in T_4(K)   
    \end{equation} 
    for $i=1,2,3$. 
    We will compute $p \left(u_1,u_2,u_3\right)$. Then $p \left(u_1,u_2,u_3\right)$ will be
    \begin{align*}
        \begin{pmatrix}
            a_1^2+a_2+a_3
            & \left(a_1+e_1\right)b_1 + b_2 + b_3
            & \left(a_1 + h_1 \right)c_1 + c_2 + c_3 
            & \left(a_1 + h_1 \right)c_1 + c_2 + c_3 
            \\
            & 
            &  + b_1 f_1 
            &  + b_1 g_1 + c_1 j_1 
            \\[2ex]
            & e_1^2+e_2+e_3
            & \left(e_1+h_1\right)f_1 + f_2 + f_3
            & \left(e_1 + l_1 \right)g_1 + g_2 + g_3 
            \\
            &
            & 
            & + f_1 j_1 
            \\[2ex]
            &   
            & h_1^2+h_2+h_3
            & \left(h_1+l_1\right)j_1 + j_2 + j_3
            \\[2ex]
            & 
            & 
            &  l_1^2+l_2+l_3
        \end{pmatrix}
    \end{align*}
    Here the \mip s are given by
    \begin{displaymath}
        p_i \left(x_1, x_2, x_3, y_1, y_2, y_3 \right) = 
        \begin{cases}
            x_1 + y_1, & \text{ if } i = 1\\
            1,         & \text{otherwise}
        \end{cases};
    \end{displaymath}
    \begin{displaymath}
        p_{ij} \left(x_1, x_2, x_3, y_1, y_2, y_3,z_1,z_2,z_3 \right) = 
        \begin{cases}
            1, & \text{ if } i = j = 1\\
            0,         & \text{otherwise}
        \end{cases};
    \end{displaymath} 
    and 
    \begin{displaymath}
        p_{ijk} \equiv 0 \text{ for } i,j,k=1,2,3.
    \end{displaymath} 
\end{example}

\begin{example}
    Take 
    \begin{displaymath}
        p(x,y) = xy + yx
    \end{displaymath} 
    For the same $u_i$ taken in \autoref{eq:ui}, we have $p \left(u_1, u_2\right)$ 
    \begin{align*}
        \begin{pmatrix}
                2a_1 a_2 
            & \left(a_2 + e_2\right)b_1 
            & \left(a_2 + h_2 \right)c_1
            & \left(a_2 + l_2 \right)d_1
            \\
            & + \left(a_1 + e_1 \right)b_2
            & + \left(a_1 + h_1 \right)c_2
            & + \left(a_1 + l_1 \right)c_2
            \\
            & 
            & + b_1 f_2 + b_2f_1
            & + b_1 g_2 + b_2g_1 + c_1j_2 + c_2j_1
            \\[2ex]
            & 2e_1e_2 
            & \left(e_2 + h_2\right)f_1 
            & \left(e_2 + l_2 \right)g_1
            \\
            &
            & + \left(e_1 + h_1\right)f_2
            & + \left(e_1 + l_1\right)g_2
            \\
            &
            & 
            & + f_1j_2 + f_2 j_1 + 
            \\[2ex]
            &   
            & 2h_1 h_2 
            & \left(h_2 + l_2\right)j_1 
            \\
            &   
            & 
            & \left(h_1 + l_1\right)j_2
            \\[2ex]
            & 
            & 
            & 2 l_1 l_2 
        \end{pmatrix}
    \end{align*}
    Here we have
    \begin{displaymath}
        p_i \left(x_1, x_2, x_3, y_1, y_2, y_3 \right) = 
        \begin{cases}
            x_2 + y_2, & \text{ if } i = 1\\
            x_1 + y_1, & \text{ if } i = 2
        \end{cases};
    \end{displaymath}
    \begin{displaymath}
        p_{ij} \left(x_1, x_2, x_3, y_1, y_2, y_3,z_1,z_2,z_3 \right) = 
        \begin{cases}
            1, & \text{ if } i = 1,~j = 2\\
            1, & \text{ if } i = 2,~j = 1\\
            0,         & \text{otherwise}
        \end{cases};
    \end{displaymath} 
    and 
    \begin{displaymath}
        p_{ijk} \equiv 0 \text{ for } i,j,k=1,2,3.
    \end{displaymath} 
\end{example}

Next we present a generalization of \autoref{lem:polyNonZero} which was a crucial ingredient to prove \autoref{thm:mainThmFor_n=4}.
\begin{proposition}\label{prop:ifOneNonzeroThenManynonzero}
    Let $k\ge 2$, $f\in K \left[F_{kn}\right]$. Suppose there exists $\avect{1},\ldots,\avect{n}\in K^k$ such that 
    \begin{displaymath}
        f \left(\avect{1},\ldots,\avect{n}\right) \neq 0.
    \end{displaymath}  
    Then for any $l\ge 1$ there exist vectors $\avect{1}',\ldots,\avect{n}',\avect{n+1}',\ldots,\avect{n+l}'\in K^k$ satisfying
    \begin{displaymath}
        f \left(\avect{\sigma(1)}',\ldots,\avect{\sigma(n)}'\right) \neq 0,
    \end{displaymath} 
    for all $\sigma\in S_{n+l}$ where $S_{n+l}$ is the symmetric group on $(n+l)$ letters.
\end{proposition}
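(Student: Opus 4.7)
The plan is to extend the construction from the proof of \autoref{lem:polyNonZero} from four specific ordered triples to the full orbit of $f$ under all relevant variable permutations. The engine of the argument is the standard pair of facts that (i) a finite product of nonzero polynomials in $K[\mathbf{x}_1,\ldots,\mathbf{x}_{n+l}]$ is nonzero, because $K[\mathbf{x}_1,\ldots,\mathbf{x}_{n+l}]$ is an integral domain, and (ii) a nonzero polynomial over an infinite field $K$ attains a nonzero value somewhere.

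Concretely, I would proceed as follows. Since $f(\avect{1},\ldots,\avect{n})\neq 0$ for some input, the polynomial $f$ is a nonzero element of $K[y_1,\ldots,y_{kn}]$, viewed as a function of $kn$ scalar coordinates grouped into $n$ blocks of $k$. Introduce $n+l$ fresh indeterminate blocks $\mathbf{x}_1,\ldots,\mathbf{x}_{n+l}$, each of length $k$, and for every $\sigma\in S_{n+l}$ define
\[
    f_\sigma(\mathbf{x}_1,\ldots,\mathbf{x}_{n+l}) \defeq f(\mathbf{x}_{\sigma(1)},\ldots,\mathbf{x}_{\sigma(n)}).
\]
Each $f_\sigma$ is obtained from the nonzero $f$ by relabeling its $kn$ coordinates as a subset of the $k(n+l)$ coordinates of $\mathbf{x}_1,\ldots,\mathbf{x}_{n+l}$ (setting the remaining coordinates to $0$ recovers $f$ up to a permutation of variables), so each $f_\sigma$ remains a nonzero polynomial.

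Now set
\[
    q(\mathbf{x}_1,\ldots,\mathbf{x}_{n+l}) \defeq \prod_{\sigma\in S_{n+l}} f_\sigma(\mathbf{x}_1,\ldots,\mathbf{x}_{n+l}).
\]
Since $K[\mathbf{x}_1,\ldots,\mathbf{x}_{n+l}]$ is an integral domain, the finite product $q$ of nonzero polynomials is nonzero. Because $K$ is algebraically closed (and hence infinite), there exist vectors $\avect{1}',\ldots,\avect{n+l}'\in K^k$ such that $q(\avect{1}',\ldots,\avect{n+l}')\neq 0$; this immediately forces every factor $f_\sigma(\avect{1}',\ldots,\avect{n+l}')$ to be nonzero, yielding precisely the conclusion.

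There is no serious obstacle: this is essentially the same trick used for \autoref{lem:polyNonZero}, only with a larger finite family of factors. One minor redundancy to note is that many $\sigma\in S_{n+l}$ give rise to the same $f_\sigma$, since $f$ only sees the first $n$ entries of the permutation; one could equivalently take the product over injections $\{1,\ldots,n\}\hookrightarrow\{1,\ldots,n+l\}$, but this is a cosmetic simplification that does not alter the argument.
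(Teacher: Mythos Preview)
Your argument is correct and is exactly the approach the paper intends: it explicitly says the proof is similar to that of \autoref{lem:polyNonZero} and leaves the details to the reader, and your product-of-permuted-copies construction is precisely that generalization. Your remark about indexing over injections rather than all of $S_{n+l}$ is a harmless cosmetic observation.
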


\begin{proof}
    The proof is similar to the proof of \autoref{lem:polyNonZero}, and hence we leave it to the reader.
\end{proof}

\section{Main results}\label{sec:mainResults}

\subsection{Image of polynomials on \texorpdfstring{$\tk[m]$}{TmK}}\label{sec:mainThm}

In the section we will generalize the results obtained in \autoref{sec:n=4}. In \autoref{lem:ZariskiDense} we will show that image of order zero polynomials are Zariski dense. \autoref{thm:order_m-1}, \autoref{thm:order_1}, and \autoref{thm:order_t} are concerned about images of polynomials of order between $1$ and $m-1$. Let us start by the following result which will be used to conclude \autoref{lem:ZariskiDense}.
\begin{lemma}\label{lem:equivalence}
    Inside $\tk[m]$ we have the following similarity of matrices.
    \begin{displaymath}
        \begin{pmatrix}
            a_{11} & a_{12} & \cdots & a_{1m} \\ 
                   & a_{22} & \cdots & a_{2m} \\ 
                   &        & \ddots & \vdots \\ 
                   &        &        & a_{mm} \\ 
        \end{pmatrix} \sim
        \diag \left(a_{11},a_{22},\ldots,a_{mm}\right)
    \end{displaymath}
    when $a_{ii}\neq a_{jj}$ for $i\neq j$. 
\end{lemma}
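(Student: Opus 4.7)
The plan is to produce an explicit upper triangular conjugating matrix $P \in T_m(K)^\times$ such that $P^{-1} A P = D$, where $A$ is the given upper triangular matrix and $D = \diag(a_{11}, \ldots, a_{mm})$. Equivalently, I will solve the matrix equation $AP = PD$ with $P$ upper triangular and $p_{ii} = 1$ for all $i$ (so that $P$ is automatically invertible in $T_m(K)$).

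Writing out $AP = PD$ entry by entry for $i \leq j$ gives
\begin{equation*}
    a_{ii} p_{ij} + \sum_{k=i+1}^{j} a_{ik} p_{kj} = a_{jj} p_{ij},
\end{equation*}
which rearranges to
\begin{equation*}
    (a_{jj} - a_{ii}) p_{ij} = \sum_{k=i+1}^{j} a_{ik} p_{kj}.
\end{equation*}
For $i = j$ this reads $0 = 0$, so I set $p_{ii} = 1$. For $i < j$, since $a_{ii} \neq a_{jj}$ by hypothesis, the coefficient $a_{jj} - a_{ii}$ is invertible, and I can solve for $p_{ij}$ explicitly. Fixing a column $j$, I would determine $p_{ij}$ recursively in decreasing $i = j-1, j-2, \ldots, 1$, using the already-computed entries $p_{i+1, j}, \ldots, p_{j-1, j}$ together with $p_{jj} = 1$. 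This produces a well-defined $P \in T_m(K)$ with unit diagonal, hence $P \in T_m(K)^\times$, satisfying $AP = PD$.

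An alternative, essentially equivalent, route is induction on $m$: apply the inductive hypothesis to the lower $(m-1) \times (m-1)$ block of $A$ using a block-diagonal conjugator $\diag(1, P')$, then clear the remaining first-row off-diagonal entries $w_2, \ldots, w_m$ by conjugating with $Q = I + \sum_{j=2}^{m} q_j E_{1j}$, where $q_j = w_j / (a_{jj} - a_{11})$; again the hypothesis $a_{11} \neq a_{jj}$ is exactly what makes this division legal. There is no real obstacle here — the lemma is a routine consequence of the distinctness of the diagonal entries — and both approaches show moreover that the conjugation can be carried out inside $T_m(K)^\times$, which is presumably what is meant by ``similarity inside $T_m(K)$.''
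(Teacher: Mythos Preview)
Your proof is correct and follows essentially the same idea as the paper: both construct an upper triangular conjugator that diagonalizes $A$, using the distinctness of the diagonal entries to make the needed divisions legal. The paper phrases this as an iterative process, clearing one off-diagonal entry at a time (starting with an entry $a_{ij}$ of minimal $j-i$) via a transvection $I + t E_{ij}$, whereas you solve for the full conjugator $P$ in one pass via the recursion $(a_{jj}-a_{ii})p_{ij} = \sum_{k=i+1}^j a_{ik}p_{kj}$. These are two presentations of the same argument --- the product of the paper's transvections is exactly your $P$ --- and your version has the mild advantage of making it explicit that the conjugator lies in $T_m(K)^\times$.
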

\begin{proof}
    Let $i,j$ be the position such that $a_{ij}\neq 0$ and $j-i$ is minimal. Then an appropriate $t$ the matrix $\left(I+t E_{ij}\right)A \left(I+t E_{ij}\right)^{-1}$ has $ij^{\text{th}}$ entry to be zero, where $E_{ij}$ denotes the elementary matrix with $ij^{\text{th}}$-entry non-zero. This process can be further continued to obtain the desired result.
\end{proof}

We can give the immediate result using the above lemma.
\begin{lemma}\label{lem:diagonal}
    Let $p\in K[F_n]$ satisfying $p(K)\neq 0$. Then we have that
    \begin{align*}
        \left\{A=(a_{\alpha\beta})\in T_m(K):a_{\delta\delta}\neq a_{\sigma\sigma}\text{ for all }1\leq\delta\neq\sigma\leq m\right\}\subseteq p\left(T_m(K)\right).
    \end{align*}
\end{lemma}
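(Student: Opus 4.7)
The plan is to combine the similarity lemma (\autoref{lem:equivalence}) with a surjectivity argument for $p$ viewed as a map $K^n \to K$. Given $A = (a_{\alpha\beta}) \in T_m(K)$ with pairwise distinct diagonal entries, \autoref{lem:equivalence} gives some $P \in T_m(K)^\times$ such that
\begin{displaymath}
    A = P \, D \, P^{-1}, \qquad D := \diag(a_{11}, a_{22}, \ldots, a_{mm}).
\end{displaymath}
Since $p(P u_1 P^{-1}, \ldots, P u_n P^{-1}) = P \, p(u_1, \ldots, u_n) \, P^{-1}$, it suffices to exhibit $u_1, \ldots, u_n \in T_m(K)$ with $p(u_1, \ldots, u_n) = D$. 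So the problem reduces to realising any prescribed diagonal matrix in the image of $p$.

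For this I will use only diagonal $u_i$'s. If $u_i = \diag(b_i^{(1)}, \ldots, b_i^{(m)})$, then because the diagonal matrices form a commutative subalgebra and $p$ has zero constant term,
\begin{displaymath}
    p(u_1, \ldots, u_n) = \diag\bigl(p(b_1^{(1)}, \ldots, b_n^{(1)}),\; \ldots,\; p(b_1^{(m)}, \ldots, b_n^{(m)})\bigr).
\end{displaymath}
Thus it is enough to show that for every $c \in K$ there exist $b_1, \ldots, b_n \in K$ with $p(b_1, \ldots, b_n) = c$; applying this separately to each $c = a_{jj}$ and collecting the values then gives the required diagonal matrices.

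The key step is therefore establishing surjectivity of $p : K^n \to K$. The hypothesis $p(K) \neq 0$ tells us that $p$ is not the zero map on $K^n$, and combined with $p(\mathbf{0}) = 0$ this forces $p$ to be non-constant as a polynomial in $K[x_1, \ldots, x_n]$. For an algebraically closed field $K$ and any $c \in K$, the polynomial $p(x_1, \ldots, x_n) - c$ is non-constant, hence a non-unit in $K[x_1, \ldots, x_n]$; Hilbert's Nullstellensatz then yields a point of $K^n$ at which it vanishes. (If one prefers an elementary argument, specialise all but one variable to values where the resulting one-variable polynomial stays non-constant, and then use the algebraic closure of $K$ to solve the single-variable equation.) This is the only nontrivial ingredient; the remainder of the argument is the assembly described above.

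Putting everything together: choose $b_i^{(j)} \in K$ so that $p(b_1^{(j)}, \ldots, b_n^{(j)}) = a_{jj}$ for each $j = 1, \ldots, m$, set $u_i := \diag(b_i^{(1)}, \ldots, b_i^{(m)})$, and let $v_i := P u_i P^{-1} \in T_m(K)$. Then
\begin{displaymath}
    p(v_1, \ldots, v_n) = P \, p(u_1, \ldots, u_n) \, P^{-1} = P D P^{-1} = A,
\end{displaymath}
so $A \in p(T_m(K))$, which is the desired inclusion.
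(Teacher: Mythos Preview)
Your proof is correct and follows essentially the same route as the paper: reduce via \autoref{lem:equivalence} and conjugation-invariance to the diagonal case, then realise each diagonal entry using the surjectivity of $p:K^n\to K$. The only cosmetic difference is that the paper outsources this last surjectivity step to \cite[Lemma~3.1]{WaZhLu21}, whereas you supply it directly via the Nullstellensatz.
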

\begin{proof}
    Let $A$ satisfy that all the diagonal entries are pairwise distinct. Then the following equivalence follows from \autoref{lem:equivalence}. 
    
    \begin{align*}
        \begin{pmatrix}
            a_{11} & a_{12} & \cdots & a_{1m} \\ 
                   & a_{22} & \cdots & a_{2m} \\ 
                   &        & \ddots & \vdots \\ 
                   &        &        & a_{mm} \\ 
        \end{pmatrix} \sim
        \diag \left(a_{11},a_{22},\ldots,a_{mm}\right).
    \end{align*}

    Now note that, if $u_i\in T_m(K)$, $v\in T_m(K)^\times$, $p\in K[F_n]$, we have that
    \begin{align*}
        v\cdot p(u_1,u_2,\ldots,u_n)\cdot v^{-1}=p(vu_1v^{-1},vu_2v^{-1},\ldots,vu_nv^{-1}).
    \end{align*}
    Indeed, $\prod\limits_{j=1}^l(vx_{i_j}v^{-1})=vx_{i_1}v^{-1}\cdot vx_{i_2}v^{-1}\cdots vx_{i_{l-1}}v^{-1}vx_{i_l}v^{-1}$. Hence, to prove the containment it is enough to show that 
    \begin{align*}
        \textbf{diag}(a_{11},a_{22},\ldots, a_{mm})\in p\left(T_m(K)\right).
    \end{align*}
    
    Since $p(K)\neq 0$, for $1\leq \alpha\leq m$ and $1\leq j\leq n$, there exists $a_{\alpha\alpha}(j)\in K$, such that $p(a_{ii}(1),a_{ii}(2),\ldots,a_{ii}(n))=a_{ii}$, by \cite[Lemma 3.1]{WaZhLu21}. Hence, considering \begin{align*}
        u_i=\textbf{diag}(a_{11}(i),a_{22}(i),\ldots, a_{mm}(i)),
    \end{align*}
    we get that $\textbf{diag}(a_{11},a_{22},\ldots, a_{mm})\in p(T_m(K))$.
    This finishes the proof.
\end{proof}

\begin{proposition}\label{lem:ZariskiDense}
    Let $p\in K[F_n]$ satisfy $p(K)\neq 0$. Then $p(T_m(K))$ is a Zariski dense subset of $T_m(K)$.
\end{proposition}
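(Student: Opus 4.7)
The plan is to leverage \autoref{lem:diagonal} directly. That lemma already gives us a concrete subset contained in $p(T_m(K))$, namely the set
\[
S \defeq \left\{A=(a_{\alpha\beta})\in T_m(K) : a_{\delta\delta}\neq a_{\sigma\sigma}\text{ for all }1\leq \delta\neq \sigma\leq m\right\}.
\]
So the entire problem reduces to showing that $S$ itself is Zariski dense in $T_m(K)$, because any set containing a dense set is dense.

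First I would identify $T_m(K)$ with the affine space $K^{m(m+1)/2}$ via the coordinates given by its entries on and above the diagonal. Under this identification, for each pair $1\leq \delta<\sigma\leq m$, the locus $Z_{\delta\sigma}\defeq\{A\in T_m(K) : a_{\delta\delta}=a_{\sigma\sigma}\}$ is the vanishing set of the single non-zero polynomial $a_{\delta\delta}-a_{\sigma\sigma}$, hence is a proper Zariski closed subset. The complement of $S$ is then the finite union
\[
T_m(K)\setminus S \;=\; \bigcup_{1\leq \delta<\sigma\leq m} Z_{\delta\sigma},
\]
which is a proper Zariski closed subset.

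Now I would invoke the standard fact that affine space $K^N$ over an algebraically closed (in particular infinite) field is irreducible, so the complement of any proper closed subset is non-empty and dense. Concretely, if $V\subseteq T_m(K)$ is a non-empty Zariski open set, then $V\cap S$ is the complement in $V$ of the proper closed subset $V\cap(T_m(K)\setminus S)$ of the irreducible open set $V$, hence non-empty. Thus $S$ is Zariski dense, and by \autoref{lem:diagonal}, $S\subseteq p(T_m(K))$, so $p(T_m(K))$ is Zariski dense in $T_m(K)$ as well.

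There is no real obstacle here; the content of the proposition is carried entirely by \autoref{lem:diagonal}. The only thing to be careful about is invoking irreducibility of affine space correctly — this requires $K$ to be infinite (guaranteed since $K$ is algebraically closed in the running assumptions). For an arbitrary infinite field the same argument works verbatim, which matches the remark in the introduction that many results carry over beyond the algebraically closed setting.
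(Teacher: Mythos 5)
Your reduction is the same as the paper's: both proofs invoke \autoref{lem:diagonal} to obtain $\mathcal{D} = \{A : a_{\delta\delta}\neq a_{\sigma\sigma}\text{ for all }\delta\neq\sigma\}\subseteq p(T_m(K))$ and then reduce the problem to showing $\mathcal{D}$ is Zariski dense in $T_m(K)\cong K^{m(m+1)/2}$. Where you diverge is in how density of $\mathcal{D}$ is established. The paper does a hands-on perturbation: given a point $P$ in a nonempty open set $U=K^N\setminus Z(I)$, it picks $f\in I$ with $f(P)\neq 0$ and then adjusts the repeated diagonal entries one at a time, using that a nonzero one-variable polynomial over an infinite field has only finitely many roots, to land on a point $P'\in\mathcal{D}\cap U$. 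You instead observe that $T_m(K)\setminus\mathcal{D}$ is the zero locus of the nonzero polynomial $\prod_{\delta<\sigma}(a_{\delta\delta}-a_{\sigma\sigma})$, hence a proper closed subset, and invoke irreducibility of affine $N$-space over an infinite field to conclude its complement is dense. Both arguments are correct and need only that $K$ is infinite; yours is more conceptual and shorter, the paper's is more elementary and self-contained (it effectively reproves the instance of irreducibility it needs). One small thing worth spelling out in your version, if you want it airtight, is why the finite union $\bigcup Z_{\delta\sigma}$ is itself \emph{proper} rather than just closed: either note that it is $Z(g)$ for the single nonzero polynomial $g=\prod(a_{\delta\delta}-a_{\sigma\sigma})$ and use that $g$ is not the zero function over an infinite field, or cite irreducibility (a finite union of proper closed sets in an irreducible space is proper); you gesture at the latter but the order of the two deductions could be made explicit.
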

\begin{proof}
    This has been proved in case of $n=2,3$ in \cite{WaZhLu21,WaZhLu22}. We present a proof for the general case, necessarily imitating the proof therein. Consider $T_m(K)$ as the 
    affine space $K^{\frac{m(m+1)}{2}}$.
    By \autoref{lem:diagonal}, we have that
    \begin{align*}
        \D=\left\{A=(a_{\alpha\beta})\in T_m(K):a_{\delta\delta}\neq a_{\sigma\sigma}\text{ for all }1\leq\delta\neq\sigma\leq m\right\}\subseteq p(T_m(K)).
    \end{align*}
    Hence, it is enough to prove that $\D$ is dense in $T_m(K)$. 

    Take a point $P=\left(a_{11}, a_{22}, \ldots, a_{mm},a_{12}, a_{23},\ldots,a_{m-1m},\ldots, a_{1m}\right)\in K^{\frac{m(m+1)}{2}}$ and an open set $U$ containing $P$. Hence, there exists an ideal $I$ of $K\left[y_1,\ldots,y_{\frac{m(m+1)}{2} }\right]$ such that $U=K^{\frac{m(m+1)}{2}}\setminus Z(I)$, where $Z(I)$ denotes the zero set of $I$. This implies there exists $f\in I$ with $f(P)\neq 0$ that is
    \begin{displaymath}
        f \left(a_{11}, a_{22}, \ldots, a_{mm},a_{12}, a_{23},\ldots,a_{m-1m},\ldots, a_{1m}\right)\neq 0.
    \end{displaymath} 
    If $a_{ii} \neq a_{j j}$ for all $i\neq j$, then $P\in \mathcal{D}$. So we will assume that there exists some $i_0\neq j_0$ but $a_{i_0i_0}=a_{j_0j_0}$. Without loss of generality assume that $i_0=1$ and $j_0=2$. Set $a_{11}' = a_{11}$. We want to find $a_{22}'\neq a_{22}$ such that $f \left(a_{11}', a_{22}', \ldots, a_{mm},a_{12}, a_{23},\ldots,a_{m-1m},\ldots, a_{1m}\right)\neq 0$. Define 
    \begin{displaymath}
        h_{22}(x) = f \left(a_{11}', x, \ldots, a_{mm},a_{12}, a_{23},\ldots,a_{m-1m},\ldots, a_{1m}\right).
    \end{displaymath} 
    Note that the function is not identically zero. As $K$ is an algebraically closed field (hence infinite), there exists $a_{22}'\notin \left\{a_{11}'\right\}$  such that $h_{22}\left(a_{22}'\right)\neq 0$. 

    Since $K$ is infinite, this procedure can be continued to obtain an element 
    \begin{displaymath}
        P'=\left(a_{11}',a_{22}',\ldots,a_{mm}',a_{12},\ldots,a_{1m}\right)
    \end{displaymath} 
    such that $a_{ii}'\neq a_{j j}'$ for all $i\neq j$  and $f \left(P'\right)\neq 0$. Hence, $P'\in \mathcal{D}$.
\end{proof}

\begin{proposition}\label{thm:order_m-1}
    Let $p\in K[F_n]$ be a polynomial in non-commutative variable. Suppose $\ord=m-1$. Then $p(T_m(K))=T_m(K)^{(m-2)}$.
\end{proposition}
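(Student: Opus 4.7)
The plan is to prove both containments separately; the forward containment exploits that $p \in \mathrm{Id}(\tk[m-1])$ as a ring-theoretic identity, while the reverse uses the multi-index $p$-inductive polynomial structure from \autoref{sec:multiIndex} to construct explicit preimages.

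For $p(\tk[m]) \subseteq \tk[m]^{(m-2)}$, I would first observe that for any $1 \le \alpha \le \beta \le m$ the principal submatrix projection $\pi_{\alpha\beta}\colon \tk[m] \to \tk[\beta-\alpha+1]$ sending $A$ to $(A_{\gamma\delta})_{\alpha \le \gamma, \delta \le \beta}$ is a $K$-algebra homomorphism, since upper triangularity forces $(AB)_{\gamma\delta}$ to depend only on entries of $A$ and $B$ indexed within $[\gamma, \delta]$. When $\beta - \alpha \le m - 2$, applying $\pi_{\alpha\beta}$ to $p(u_1, \ldots, u_n)$ gives $p(\pi_{\alpha\beta}(u_1), \ldots, \pi_{\alpha\beta}(u_n)) = 0$ because $p \in \mathrm{Id}(\tk[m-1])$. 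Hence every $(\alpha, \beta)$-entry with $\beta - \alpha \le m - 2$ of $p(u_1, \ldots, u_n)$ vanishes.

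For the reverse containment, let $X = x_{1m}E_{1m} \in \tk[m]^{(m-2)}$. By \autoref{prop:polyEntry}, the $(1, m)$-entry of $p(u_1, \ldots, u_n)$ decomposes as $\sum_{r=1}^{m-1} P^{(r)}_{1m}(p)$, and the same submatrix argument as above shows that each $P^{(r)}_{1m}(p)$ for $r \le m - 2$ is identically zero: every monomial occurring there uses off-diagonal factors lying along a chain spanning at most $m - 1$ consecutive rows/columns, and therefore corresponds to an evaluation of $p$ inside $\tk[\le m-1]$, which vanishes by the polynomial identity. Consequently, only the full-chain term
\[
P^{(m-1)}_{1m}(p) = \sum_{i_1, \ldots, i_{m-1}} p_{i_1 \cdots i_{m-1}}(\avect{11}, \ldots, \avect{mm}) \, a_{12}(i_1) a_{23}(i_2) \cdots a_{m-1,m}(i_{m-1})
\]
contributes. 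Since $p \notin \mathrm{Id}(\tk[m])$, there exist indices $(i_1^0, \ldots, i_{m-1}^0)$ and vectors $\avect{11}, \ldots, \avect{mm} \in K^n$ such that $\lambda_0 := p_{i_1^0 \cdots i_{m-1}^0}(\avect{11}, \ldots, \avect{mm}) \ne 0$.

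Fixing these data, I would set $u_i = \diag(a_{11}(i), \ldots, a_{mm}(i))$ for $i \notin \{i_1^0, \ldots, i_{m-1}^0\}$, and for the remaining indices keep the same diagonals while imposing $a_{12}(i_1^0) = \lambda_0^{-1}x_{1m}$ together with $a_{s,s+1}(i_s^0) = 1$ for $s = 2, \ldots, m-1$ (all other off-diagonal entries of every $u_i$ being set to zero). This prescription is well-defined even when the $i_s^0$'s fail to be pairwise distinct, because coincidences only amount to multiple prescriptions on distinct entries of the same matrix. Under this choice, every monomial $a_{12}(j_1) a_{23}(j_2) \cdots a_{m-1, m}(j_{m-1})$ in the sum vanishes unless $j_s = i_s^0$ for all $s$, so the $(1, m)$-entry of $p(u_1, \ldots, u_n)$ reduces to $\lambda_0 \cdot \lambda_0^{-1}x_{1m} \cdot 1 \cdots 1 = x_{1m}$. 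Combined with the first step (which kills all other entries), this yields $p(u_1, \ldots, u_n) = X$.

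The main obstacle is the clean verification that the lower-weight multi-index polynomials $P^{(r)}_{1m}(p)$ for $r \le m - 2$ vanish identically; this rests on interpreting each such polynomial as an evaluation of $p$ in a strictly smaller upper triangular algebra via the recursive structure of \autoref{sec:multiIndex}. Once this reduction is executed cleanly, the explicit construction above delivers the required preimage without any case analysis on coincidences among the $i_s^0$.
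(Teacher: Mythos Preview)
Your proposal is correct and follows essentially the same strategy as the paper: both deduce the forward containment from the vanishing of the multi-index polynomials $p_{i_1\cdots i_r}$ for $r\le m-2$ (you phrase this via the principal-submatrix homomorphisms $\pi_{\alpha\beta}$, the paper phrases it directly in terms of the \mip s), and both build a preimage of $x_{1m}E_{1m}$ by choosing diagonal $u_i$'s except that $a_{s,s+1}(i_s^0)$ is set to $1$ (or $\lambda_0^{-1}x_{1m}$ for $s=1$). The one genuine difference is that the paper splits into cases according to coincidences among the $i_s^0$, whereas you observe that the prescriptions land in distinct matrix positions $(s,s+1)$ regardless of such coincidences, so the construction is uniform; this is a clean simplification and avoids the somewhat ad hoc case analysis in the paper.
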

\begin{proof}
    Since $\ord=m-1$, we have $p \left(T_{m-1}(K)\right)=0$ but $p \left(T_m(K)\right)\neq 0$. Since $p \left(T_{m-1}(K)\right)=0$, the \mip s of index size less than or equal to $m-2$ are identically zero. Hence, $p \left(T_m(K)\right)\subseteq T_m(K)^{(m-2)}$. For showing the other inequality, we take a matrix $X\in T_n(K)^{(m-2)}$ such that 
    \begin{displaymath}
        X_{ij} = 
        \begin{cases}
            x, & \text{ if } i=1, j =n\\
            0,      & \text{otherwise}.
        \end{cases}.
    \end{displaymath} 
    Since $p \left(T_m(K)\right)\neq 0$, there exists a multi-index $I$  of size $m-1$ such that $p_I$ is not identically zero, say $I_0=\left(i_1,i_2,\ldots,i_{m-1}\right)$. Let $p_I \left(\avect{11},\avect{22},\ldots,\avect{mm}\right)\neq 0$ for some $\avect{11},\ldots,\avect{mm}\in K^{n}$. Set
    \begin{displaymath}
        \lambda_0 = p_{I_0} \left(\avect{11},\avect{22},\ldots,\avect{mm}\right).
    \end{displaymath} 
    At first assume that $i_1=i_2=\cdots=i_{m-1}$. Set
    \begin{displaymath}
        u_i = 
        \begin{cases}
            \begin{pmatrix}
                a_{11}(i) & 0 & 0 & \cdots & 0 \\
                          & a_{22}(i) & 0 & \cdots & 0 \\
                          &           & \ddots & \cdots & \vdots\\
                          &           & & & a_{mm}(i)
            \end{pmatrix}, &  \text{ if } i\neq i_1 \\[1ex]
            \begin{pmatrix}
                a_{11}(i) & \lambda_0^{-1}x & 0 & \cdots & 0 \\
                          & a_{22} \left(i_1\right) & 1 & \cdots & 0 \\
                          &           & \ddots & \cdots & \vdots\\
                          &           & & a_{m-1m-1}\left(i_{1}\right) & 1\\
                          &           & & & a_{mm}\left(i_{1}\right)
            \end{pmatrix}, &  \text{ if } i = i_1
        \end{cases}.
    \end{displaymath} 
    Then $p \left(\vec{\mathbf{u}}\right)=X$ and hence $p(T_m(K))=T_m(K)^{(m-2)}$.\\

    We now assume that $i_1\neq i_2=i_3=\cdots=i_{m-1}$. In that case we take 
    \begin{displaymath}
        u_i = 
        \begin{cases}
            \begin{pmatrix}
                a_{11}(i) & 0 & 0 & \cdots & 0 \\
                          & a_{22}(i) & 0 & \cdots & 0 \\
                          &           & \ddots & \cdots & \vdots\\
                          &           & & & a_{mm}(i)
            \end{pmatrix}, &  \text{ if } i\neq i_1,i_2 \\[1ex]
            \begin{pmatrix}
                a_{11}(i) & \lambda_0^{-1}x & 0 & \cdots & 0 \\
                          & a_{22} \left(i_1\right) & 0 & \cdots & 0 \\
                          &           & \ddots & \cdots & \vdots\\
                          &           & & & a_{mm}\left(i_{1}\right)
            \end{pmatrix}, &  \text{ if } i = i_1 
            \\[1ex]
            \begin{pmatrix}
                a_{11}(i) & 0 & 0 & \cdots & 0 \\
                          & a_{22} \left(i_2\right) & 1 & \cdots & 0 \\
                          &           & \ddots & \cdots & \vdots\\
                          &           & & a_{m-1 m-1}\left(i_2\right) & 1 \\
                          &           & & & a_{mm}\left(i_{2}\right)
            \end{pmatrix}, &  \text{ if } i = i_2
        \end{cases}.
    \end{displaymath} 
    Again note that $p(\vec{\mathbf{u}})=X$  and hence we are done. In this way, it can be shown that there exists $\vec{\mathbf{u}}$ such that $p \left(\vec{\mathbf{u}}\right)=X$.
\end{proof}

\begin{proposition}\label{thm:order_1}
    Let $p\in K[F_n]$ be a polynomial in non-commutative variable. Suppose $\ord=1$. Then $p(T_m(K)) = T_m(K)^{(0)}$.
\end{proposition}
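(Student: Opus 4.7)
The inclusion $p(T_m(K)) \subseteq T_m(K)^{(0)}$ is immediate: since $\ord = 1$ forces $p \in \mathrm{Id}(K)$, the diagonal entry $[p(\vec{\mathbf{u}})]_{\alpha\alpha} = p(\avect{\alpha\alpha})$ vanishes for every $\alpha$, so $p(\vec{\mathbf{u}})$ lies in $T_m(K)^{(0)}$.

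For the reverse inclusion, the plan is to exploit $p \notin \mathrm{Id}(T_2(K))$. By \autoref{prop:polyEntry} applied to $T_2(K)$, some one-index $p$-inductive polynomial $p_{i_0}$ must be non-identically zero. Applying \autoref{prop:ifOneNonzeroThenManynonzero} to $p_{i_0}$, I would fix vectors $\avect{11}', \ldots, \avect{mm}' \in K^n$ such that $p_{i_0}(\avect{\alpha\alpha}', \avect{\beta\beta}') \neq 0$ for every pair $1 \leq \alpha < \beta \leq m$.

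Given $X = (x_{\alpha\beta}) \in T_m(K)^{(0)}$, declare $u_i$ for $i \neq i_0$ to be diagonal with the prescribed values, and let $u_{i_0}$ be upper triangular with the same diagonal and off-diagonal entries $a_{\alpha\beta}(i_0)$ still to be determined. By \autoref{prop:polyEntry} and \autoref{prop:generalizedMultiplication}, each entry $[p(\vec{\mathbf{u}})]_{\alpha\beta}$ is then a polynomial in the set $\{a_{\alpha'\beta'}(i_0) : \alpha \leq \alpha' < \beta' \leq \beta\}$. Because intermediate indices $\alpha < \eta_1 < \cdots < \eta_{r-1} < \beta$ along a length-$r$ path are strictly increasing, the variable $a_{\alpha\beta}(i_0)$ itself can appear only in the $r = 1$ contribution, and then linearly with leading coefficient $p_{i_0}(\avect{\alpha\alpha}', \avect{\beta\beta}') \neq 0$.

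This yields a triangular system: for $d = 1, 2, \ldots, m-1$ in turn, and for each $(\alpha, \beta)$ with $\beta - \alpha = d$, the equation $[p(\vec{\mathbf{u}})]_{\alpha\beta} = x_{\alpha\beta}$ is linear in $a_{\alpha\beta}(i_0)$ with invertible leading coefficient, the remaining terms involving only off-diagonal entries of strictly smaller range which were already fixed at earlier stages. Solving iteratively produces $u_{i_0}$ with $p(\vec{\mathbf{u}}) = X$. The only technical point requiring care is the separation of the $r = 1$ contribution from the higher-order path contributions in the entry formula of \autoref{prop:generalizedMultiplication}, but this is forced by the strict monotonicity of $\eta_1 < \cdots < \eta_{r-1}$, so no essential obstacle arises.
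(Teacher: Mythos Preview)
Your proposal is correct and follows essentially the same route as the paper's own proof: both fix a single index $i_0$ with $p_{i_0}$ non-vanishing, invoke \autoref{prop:ifOneNonzeroThenManynonzero} to make all the coefficients $p_{i_0}(\avect{\alpha\alpha}',\avect{\beta\beta}')$ simultaneously nonzero, take every $u_i$ with $i\neq i_0$ diagonal, and then solve for the off-diagonal entries of $u_{i_0}$ recursively along increasing super-diagonal level. The paper writes out the explicit recursive formulas for $(u_{i_0})_{l,l+k}$, whereas you phrase the same recursion more abstractly as a triangular linear system with invertible pivots; the content is identical.
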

\begin{proof}
    Since $\ord=1$, we have $p \left(T_{1}(K)\right)=0$ but $p \left(T_{2}(K)\right)\neq 0$. Since $p \left(T_{1}(K)\right)=0$, the \mip s of index size less than or equal to $1$ are identically zero. Hence, $p \left(T_m(K)\right)\subseteq T_{m}(K)^{(0)}$. 

    \noindent For the reverse inclusion, let 
    \begin{displaymath}
        X = 
        \begin{pmatrix}
            0 & x_{12} & x_{13} & \cdots & x_{1m} \\
              &   0    & x_{23} & \cdots & x_{2m} \\
              &        & \vdots & \ddots & \vdots \\
              &        &   0    &   & x_{mm} \\
        \end{pmatrix}.
    \end{displaymath} 
    We want to find $\vec{\mathbf{u}}= \left(u_1,\ldots,u_n\right)\in \tk[m]^n$ such that $p \left(\vec{\mathbf{u}}\right)=X$. We will give an algorithm to find $\vec{\mathbf{u}}$. First, observe that $p \left(\tk[2]\right)\neq 0$, so there exists $i_0\in \{1,2\ldots,n\}$ such that 
    \begin{displaymath}
        p_{i_0} \left(\avec{1,m}\right)\neq 0.
    \end{displaymath} 
    Using \autoref{prop:ifOneNonzeroThenManynonzero}, we can find $\avect{11}',\ldots,\avect{mm}'\in K^n$ for all $\sigma\in S_m$ such that 
    \begin{displaymath}
        \lambda_{\alpha\beta}=p_{i_0} \left(\avec{\alpha,\beta}'\right) \neq 0.
    \end{displaymath} 
    Now we define $u_i$ to be $\diag \left(a_{11}(i),\ldots,a_{mm}(i)\right)$ if $i\neq i_0$. If $i=i_0$, we take $u_{i_0}$ as follows.
    \begin{itemize}
        \item The $(ll)$-entry will be $\left(u_{i_0}\right)_{ll}=a_{ll}\left(i_0\right)$.
        \item The $(ll+1)$-entry will be $\left(u_{i_0}\right)_{ll+1}=\lambda_{l l+1}^{-1}x_{l l+1}$.
        \item The $(ll+2)$-entry will be 
        \begin{displaymath}
            \left(u_{i_0}\right)_{ll+2}=\lambda_{ll+2}^{-1} \left(x_{ll+2}-\lambda_{ll+1l+2}\left(u_{i_0}\right)_{ll+1}\left(u_{i_0}\right)_{l+1l+2}\right)
        \end{displaymath} 
        \item The $(ll+3)$-entry will be 
        \begin{align*}
            \left(u_{i_0}\right)_{ll+3} & =\lambda_{ll+3}^{-1} \left(x_{ll+3}-\lambda_{ll+1l+3}\left(u_{i_0}\right)_{ll+1}\left(u_{i_0}\right)_{l+1l+2}\right. \\
            & \left.-\lambda_{ll+2l+3}\left(u_{i_0}\right)_{ll+2}\left(u_{i_0}\right)_{l+2l+3}-\lambda_{ll+1l+2l+3}\left(u_{i_0}\right)_{ll+1}\left(u_{i_0}\right)_{l+1l+2}\left(u_{i_0}\right)_{l+2l+3}\right).
        \end{align*} 
        \item Similarly, one can write the $(ll+k)$-entry in terms of $\left(u_{i_0}\right)_{ll+\alpha}$ where $\alpha=1,2,\ldots,k-1$.   
    \end{itemize}
    It can be checked that $p \left(\vec{\mathbf{u}}\right)=X$. Hence, we have $p \left(\tk[m]\right)=\tk[m]^{(0)}$.
\end{proof}

\begin{proposition}\label{thm:order_t}
    Let $p\in K[F_n]$ be a polynomial in non-commutative variable. Suppose $1<\ord = t <m-1$. Then $p(T_m(K)) \subseteq T_m(K)^{(t-1)}$.
\end{proposition}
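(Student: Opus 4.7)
The plan is to reduce the statement to the fact that $p$ is a polynomial identity of $T_t(K)$ by exploiting a ``locality'' feature of upper-triangular multiplication. The key observation is that for any indices $1\leq\alpha\leq\beta\leq m$, the assignment sending $A\in T_m(K)$ to its principal sub-matrix $\pi_{[\alpha,\beta]}(A)$ on rows and columns $\alpha,\alpha+1,\ldots,\beta$ is a $K$-algebra homomorphism $T_m(K)\to T_{\beta-\alpha+1}(K)$. Indeed, for upper-triangular $A,B$ and any $\alpha\leq i\leq j\leq\beta$, the sum $(AB)_{ij}=\sum_k A_{ik}B_{kj}$ is supported on $k\in\{i,\ldots,j\}\subseteq\{\alpha,\ldots,\beta\}$ by upper-triangularity, so the product commutes with sub-matrix extraction. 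Equivalently, this can be read off directly from the explicit expansion of $[p(\vec{\mathbf{u}})]_{\alpha\beta}$ given in \autoref{prop:polyEntry}, where only the entries $(u_i)_{\gamma\delta}$ with $\alpha\leq\gamma\leq\delta\leq\beta$ appear.

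Applying this homomorphism property to the polynomial map, I would conclude that for every $\vec{\mathbf{u}}\in T_m(K)^n$,
\[
\pi_{[\alpha,\beta]}\bigl(p(\vec{\mathbf{u}})\bigr)=p\bigl(\pi_{[\alpha,\beta]}(\vec{\mathbf{u}})\bigr).
\]
Now fix a pair $(\alpha,\beta)$ with $\beta-\alpha\leq t-1$, so that $\beta-\alpha+1\leq t$. Then $\pi_{[\alpha,\beta]}(u_i)\in T_{\beta-\alpha+1}(K)$, which embeds into $T_t(K)$; since $\ord=t$ in particular means $p\in\mathrm{Id}(T_t(K))$, the right-hand side above is the zero matrix. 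Reading off its $(1,\beta-\alpha+1)$-entry, which coincides with $[p(\vec{\mathbf{u}})]_{\alpha\beta}$, gives $[p(\vec{\mathbf{u}})]_{\alpha\beta}=0$.

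Ranging over all admissible pairs shows that $p(\vec{\mathbf{u}})\in T_m(K)^{(t-1)}$, and since $\vec{\mathbf{u}}$ was arbitrary, $p(T_m(K))\subseteq T_m(K)^{(t-1)}$. No serious obstacle is anticipated here, as this is precisely the ``easy half'' containment already carried out in the proofs of \autoref{thm:order_m-1} and \autoref{thm:order_1}, and of cases (ii)--(iv) of \autoref{thm:mainThmFor_n=4}. The only minor point requiring care is in pinning down the sub-matrix map as an algebra homomorphism; once that is done, the rest is an immediate application of the hypothesis $p\in\mathrm{Id}(T_t(K))$. The genuinely harder question, namely whether the reverse inclusion $T_m(K)^{(t-1)}\subseteq p(T_m(K))$ holds for intermediate $t$, is of course the subject of the subsequent sections and is shown (by the author's counterexample) to fail in general.
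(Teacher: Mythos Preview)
Your argument is correct. The underlying observation---that the $(\alpha,\beta)$-entry of a product of upper-triangular matrices depends only on the consecutive principal block on rows and columns $\alpha,\ldots,\beta$---is exactly what drives the paper's proof as well, but the two presentations package it differently. The paper invokes the multi-index $p$-inductive polynomial machinery of \autoref{sec:multiIndex}: since $p(T_t(K))=0$, all \mip s of index size at most $t-1$ vanish identically, and by \autoref{prop:polyEntry} those are precisely the polynomials contributing to entries with $\beta-\alpha\leq t-1$. Your route is more self-contained: you recast the same locality as the statement that $\pi_{[\alpha,\beta]}:T_m(K)\to T_{\beta-\alpha+1}(K)$ is a $K$-algebra homomorphism, and then apply $p\in\mathrm{Id}(T_t(K))$ directly. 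This avoids appealing to the explicit entry formulas and the notation of \autoref{sec:multiIndex} altogether, which is a genuine simplification for this containment. The paper's formulation, on the other hand, sets up exactly the language (the polynomials $p_{i_1\cdots i_r}$) that is needed later for the constructive arguments in \autoref{thm:order_m-1}, \autoref{thm:order_1}, and \autoref{sec:Waring}, so its version is better integrated with the rest of the exposition even if yours is cleaner in isolation.
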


\begin{proof}
    Since $\ord=t$, we have $p \left(T_{t}(K)\right)=0$ but $p \left(T_{t+1}(K)\right)\neq 0$. Since $p \left(T_{t}(K)\right)=0$, the \mip s of index size less than or equal to $t-1$ are identically zero. Hence, $p \left(T_m(K)\right)\subseteq T_{m}(K)^{(t-1)}$. 
\end{proof}

\noindent Note that in the above proposition, the containment can be strict. Here is a supporting example.

\begin{example}\label{eg:counterExample}
    Let $m=5,n=2$ and $p(x_1,x_2)=\left(x_1x_2-x_2x_1\right)^2$. Note that $\ord=t=2$ (see \autoref{eg:order2}). We show that $p\left(T_5(K)\right) \neq T_5(K)^{(1)}$. Let  
    \begin{align*}
        X_1 = 
        \begin{pmatrix}
            a_1 & b_1 & c_1 & d_1 & e_1 \\
            0 & f_1 & g_1 & h_1 & i_1 \\
            0 & 0 & j_1 & k_1 & l_1 \\
            0 & 0 & 0 & m_1 & n_1 \\
            0 & 0 & 0 & 0 & o_1 
        \end{pmatrix} \text{ and } 
        X_2 = 
        \begin{pmatrix}
            a_2 & b_2 & c_2 & d_2 & e_2 \\
            0 & f_2 & g_2 & h_2 & i_2 \\
            0 & 0 & j_2 & k_2 & l_2 \\
            0 & 0 & 0 & m_2 & n_2 \\
            0 & 0 & 0 & 0 & o_2 
        \end{pmatrix}.
    \end{align*}
    Then, 
    \begin{align*}
        p(X_1,X_2)=\left(
            \begin{array}{ccccc} 
                0 
                & 0 
                & \substack{\left(f_{1}\,g_{2}-f_{2}\,g_{1}+g_{1}\,j_{2}-g_{2}\,j_{1}\right)\\ \left(a_{1}\,b_{2}-a_{2}\,b_{1}+b_{1}\,f_{2}-b_{2}\,f_{1}\right)} 
                & \star
                & \star
                \\[1ex]
                  0 
                & 0 
                & 0 
                & \substack{\left(f_{1}\,g_{2}-f_{2}\,g_{1}+g_{1}\,j_{2}-g_{2}\,j_{1}\right)\\ \left(j_{1}\,k_{2}-j_{2}\,k_{1}+k_{1}\,m_{2}-k_{2}\,m_{1}\right) }
                & \star
                \\[1ex]
                0 
                & 0 
                & 0 
                & 0 
                & \substack{\left(j_{1}\,k_{2}-j_{2}\,k_{1}+k_{1}\,m_{2}-k_{2}\,m_{1}\right)\\ \left(m_{1}\,n_{2}-m_{2}\,n_{1}+n_{1}\,o_{2}-n_{2}\,o_{1}\right)}
                \\ 
                0 & 0 & 0 & 0 & 0
                \\ 
                0 & 0 & 0 & 0 & 0 
            \end{array}
        \right)
    \end{align*}
    Hence, it is easy to see that the matrix
    \begin{displaymath}
        A = 
        \begin{pmatrix}
            0 & 0 & 1 & 0 & 0 \\
            0 & 0 & 0 & 0 & 0 \\
            0 & 0 & 0 & 0 & 1 \\
            0 & 0 & 0 & 0 & 0 \\
            0 & 0 & 0 & 0 & 0 \\
        \end{pmatrix}
    \end{displaymath}
    is not in the image of $p$. 
\end{example}

\hf The natural question in this regard is that \emph{can every element of $T_m(K)^{(t-1)}$ be written as sum of $d$ many elements from $p\left( T_m(K) \right) $ for some $d\geq 2$}? These are known as Waring type problem. We prove existence of such $d$ in the next subsection. 
\subsection{Waring type problem}\label{sec:Waring}

There has been a growing interest about the Waring problem for matrix algebras. For example, it has been proved that any traceless matrix can be written as sum of two matrices from $f\left( M_n(\mathbb{C} ) \right) -  f\left( M_n(\mathbb{C} ) \right)$, where $f$ is neither an identity nor a central polynomial for $M_n(\mathbb{C} )$ (see \cite{BrVs22} by Bre{\v s}ar and {\v S}emrl). Recently, they have also proved that if $\alpha _1,\alpha _2,\alpha _3\in \mathbb{C} \setminus \{0\}$ and $\alpha _1+\alpha _2+\alpha _3=0$, then any traceless matrix over $\mathbb{C} $ can be written as $\alpha _1A_1 + \alpha _2A_2+\alpha _3A_3$, where $A_i\in f\left( M_n(\mathbb{C} ) \right) $ (see \cite{BrVs_unpub}). Our next result is of a similar flavour. We show that any matrix of $T_m(K)^{(t-1)}$ can be written as sum of two elements of $p\left( T_m(K) \right) $, where $\ord=t$.

We know from \autoref{thm:order_t} and \autoref{eg:counterExample} that
$p\left(T_m(K)\right)\neq T_m(K)^{(t-1)}$, although $p\left(T_m(K)\right)\subseteq T_m(K)^{(t-1)}$. We want to show in this section that in all cases 
there exists $d=d(m,t)\in\mathbb{N}$ such that 
\[\underbrace{p\left(T_m(K)\right)+p\left(T_m(K)\right)+\ldots+p\left(T_m(K)\right)}_{d\text{-times}}=T_m(K)^{(t-1)},\]
under the conditions of \autoref{thm:order_t}. 
Since $\ord=t$ and $p\left(T_m(K)\right)\neq 0$, we get the existence of $1\leq i_1,i_2,\cdots,i_t\leq m$ such that
\[
p_{i_1i_2\ldots i_t}\neq 0.
\]
Consider the number $I_p=\min\left\{\vert \{i_1,i_2,\cdots,i_t\} \vert ; p_{i_1i_2\ldots i_t}\neq 0\right\}$. We will give several results, depending on the quantity $I_p$. 
We first analyze some examples for a few cases. These will be generalized in further discussions.
 
\begin{example}
    Take $m=6,~t=2$ and $I_p=1$, that is,  $p_{i_0i_0} \neq 0 $ for some $i_0$.
    \[
        X =  \begin{pmatrix}
                0 & 0 & x_{13} & x_{14} & x_{15} &  x_{16} \\
                0 & 0 & 0 & x_{24} & x_{25} &  x_{26} \\
                0 & 0 & 0 & 0 & x_{35} &  x_{36} \\
                0 & 0 & 0 & 0 & 0 &  x_{46} \\
                0 & 0 & 0 & 0 & 0 &  0 \\
                0 & 0 & 0 & 0 & 0 &  0 \\
            \end{pmatrix}
    \]
    Choose $u_{i_0}$ to be the matrix
    \begin{align*}
        \left(
            \scalemath{0.89}{
                \begin{array}{cccccc}
                    a_{11} & 1 & 0 & 0 & 0 & 0 
                    \\
                    0 & a_{22} & \left( \lambda _{123}u_{12} \right) ^{-1}x_{13} & \left( \lambda _{124}u_{12} \right) ^{-1}\left( x_{14} - \beta _{14} \right) & \left( \lambda _{125}u_{12} \right) ^{-1} \left( x_{15}-\beta _{15} \right)   &  \left( \lambda _{126}u_{12} \right) ^{-1} \left( x_{16}-\beta _{16} \right)  
                    \\
                    0 & 0 & a_{33} & \left( \lambda_{234} u_{23} \right) ^{-1}  x_{24} & \left( \lambda_{235}u_{23} \right) ^{-1}  \left( x_{25}-\beta _{25} \right)   &  \left( \lambda _{236}u_{23} \right) ^{-1} \left( x_{26}-\beta _{26} \right) 
                    \\
                    0 & 0 & 0 & a_{44} & \left( \lambda _{345} u_{34} \right)^{-1}   x_{35} &   \left( \lambda _{346}u_{34} \right)^{-1}  \left( x_{36}-\beta _{36} \right)
                    \\
                    0 & 0 & 0 & 0 & a_{55} & \left( \lambda _{456}u_{45} \right) ^{-1}  x_{46} 
                    \\
                    0 & 0 & 0 & 0 & 0 &  a_{66}
                \end{array}
            }
        \right)
    \end{align*}
    for a suitable choice of $\beta_{ij}$ and 
    \[
        \lambda _{ijk} = p_{i_0i_0}\left( \avec{ijk} \right) \neq 0.
    \]
\end{example} 

\hf In general, for general $m$ and $t=2$ we can choose the matrix $u_{i_0}$ as follows if $p_{i_0i_0}\neq 0$ for some $i_0$. For $2 \leq l \leq m-1$ and $1\leq s\leq m-l$, choose the entries to be 
\begin{align*}
    u_{12} & =  1 \\
    u_{l,l+1} & = \left( \lambda _{l-1,l,l+1}\cdot u_{l-1,l} \right) ^{-1}  \left[ x_{l-1,l+1} \right] 
    \\
    u_{l,l+2} & = \left( \lambda _{l-1,l,l+2} \cdot u_{l-1,l}\right) ^{-1} \left[ x_{l-1,l+2} - \beta _{l-1,l+2} \right] 
    \\
    \vdots & \kern 2cm \vdots \kern 2cm \vdots
    \\
    u_{l,l+s} & = \left( \lambda _{l-1,l,l+s} \cdot u_{l-1,l} \right) ^{-1} \left[ x_{l-1,l+s} - \beta _{l-1,l+s} \right].
\end{align*}

\noindent The other entries of $u_{i_0}$ are taken to be zero. For all $i\neq i_0$ we take $u_i = \diag \left( a_{11}(i), a_{22}(i), \ldots ,a_{mm}(i)\right) $.

\begin{example}
    Take $m=6,~t=3$ and $I_p=1$, that is,  $p_{i_0i_0i_0} \neq 0 $ for some $i_0$.
    \[
        X =  \begin{pmatrix}
                0 & 0 & 0 & x_{14} & x_{15} &  x_{16} \\
                0 & 0 & 0 & 0 & x_{25} &  x_{26} \\
                0 & 0 & 0 & 0 & 0 &  x_{36} \\
                0 & 0 & 0 & 0 & 0 &  0 \\
                0 & 0 & 0 & 0 & 0 &  0 \\
                0 & 0 & 0 & 0 & 0 &  0 \\
            \end{pmatrix}
    \]

    Choose the matrix $u_{i_0}$ as follows. 
    \begin{align*}
        \left(
            \scalemath{0.91}{
                \begin{array}{cccccc}
                    a_{11} & 1 & 0 & 0 & 0 & 0 
                    \\
                    0 & a_{22} & 1 & 0 & 0 & 0
                    \\
                    0 & 0 & a_{33} & \left( \lambda_{1234} \cdot u_{12}u_{23} \right) ^{-1} x_{14} & \left( \lambda _{1235}\cdot u_{12}u_{23} \right)^{-1}  \left( x_{15}-\beta _{15} \right)    &  \left( \lambda _{1236}\cdot u_{12}u_{23} \right) ^{-1}  \left( x_{16} - \beta _{16} \right) 
                    \\
                    0 & 0 & 0 & a_{44} & \left( \lambda _{2345}\cdot u_{23}u_{34}  \right)^{-1} x_{25}  &  \left( \lambda _{2346}\cdot u_{23}u_{34} \right) ^{-1} \left( x_{26}-\beta _{26} \right) 
                    \\
                    0 & 0 & 0 & 0 & a_{55} & \left( \lambda _{3456}\cdot u_{34}u_{45} \right) ^{-1}  x_{36} 
                    \\
                    0 & 0 & 0 & 0 & 0 &  a_{66} 
                    \\
                \end{array}
            } 
        \right)
    \end{align*}
    for a suitable choice of $\beta_{ij}$ and 
    \[
        \lambda _{ijkl} = p_{i_0i_0i_0}\left( \avec{ijkl} \right) \neq 0.
    \]
\end{example}

\hf In general, for general $m$ and $t=3$ we can choose the matrix $u_{i_0}$ as follows if $p_{i_0i_0i_0}\neq 0$ for some $i_0$. For $3 \leq l \leq m-1$ and $1\leq s\leq m-l$, choose the entries to be 
\begin{align*}
    u_{12} & = u_{23} = 1  \\
    u_{l,l+1} & = \left( \lambda_{l-2, l-1, l, l+1}\cdot u_{l-2,l-1}u_{l-1,l} \right) ^{-1} \left[ x_{l-2,l+1} \right] 
    \\
    u_{l,l+2} & = \left( \lambda _{l-2,l-1,l,l+2} \cdot u_{l-2,l-1}u_{l-1,l} \right) ^{-1} \left[ x_{l-2,l+2 } - \beta _{l-2,l+2} \right] 
    \\
    \vdots & \kern 2cm \vdots \kern 2cm \vdots
    \\
    u_{l,l+s} & = \left(\lambda _{l-2,l-1,l,l+s}\cdot u_{l-2,l-1}u_{l-1,l}\right) ^{-1} .\left[ x_{l-2,l+s} - \beta _{l-2,l+s} \right] .
\end{align*} 

\noindent The other entries of $u_{i_0}$ are taken to be zero. For all $i\neq i_0$ we take $u_i = \diag \left( a_{11}(i), a_{22}(i), \ldots ,a_{mm}(i)\right) $.

\begin{example}
    Take $m=6,~t=4$ and $I_p=1$, that is, $p_{i_0i_0i_0i_0i_0} \neq 0 $ for some $i_0$.
    \[
        X =  \begin{pmatrix}
                0 & 0 & 0 & 0 & x_{15} &  x_{16} \\
                0 & 0 & 0 & 0 & 0 &  x_{26} \\
                0 & 0 & 0 & 0 & 0 &  0 \\
                0 & 0 & 0 & 0 & 0 &  0 \\
                0 & 0 & 0 & 0 & 0 &  0 \\
                0 & 0 & 0 & 0 & 0 &  0 \\
            \end{pmatrix}
    \]

    Choose the matrix $u_{i_0}$ as follows.
    \begin{align*}
        \left(
        \begin{array}{cccccc}
            a_{11} & 1 & 0 & 0 & 0 & 0 
            \\
            0 & a_{22} & 1 & 0 & 0 & 0
            \\
            0 & 0 & a_{33} & 1 & 0 & 0
            \\
            0 & 0 & 0 & a_{44} & \left( \lambda _{12345}\cdot u_{12}u_{23}u_{34} \right)^{-1} x_{15}  &  \left( \lambda _{12346}\cdot u_{12}u_{23}u_{34} \right) ^{-1} \left( x_{16}-\beta _{16} \right) 
            \\
            0 & 0 & 0 & 0 & a_{55} & \left( \lambda _{23456}\cdot u_{23} u_{34}u_{45} \right) ^{-1}  x_{26} 
            \\
            0 & 0 & 0 & 0 & 0 &  a_{66} 
            \\
        \end{array} 
        \right)
    \end{align*}
    for a suitable choice of $\beta_{ij}$ and 
    \[
        \lambda _{i_1i_2i_3i_4i_5} = p_{i_0i_0i_0i_0i_0}\left( \avec{i_1i_2i_3i_4i_5} \right) \neq 0.
    \]
\end{example}

\hf In general, for general $m$ and $t=4$ we can choose the matrix $u_{i_0}$ as follows if $p_{i_0i_0i_0i_0}\neq 0$ for some $i_0$. For $4 \leq l \leq m-1$ and $1\leq s\leq m-l$, choose the entries to be 
\begin{align*}
    u_{12} & = u_{23} = u_{34} = 1 \\
    u_{l,l+1} & = \left( \lambda_{l-3,l-2, l-1, l, l+1}\cdot u_{l-3,l-2}u_{l-2,l-1}u_{l-1,l} \right) ^{-1} \left[ x_{l-3,l+1} \right] 
    \\
    u_{l,l+2} & = \left( \lambda _{l-3,l-2,l-1,l,l+2} \cdot u_{l-3,l-2}u_{l-2,l-1}u_{l-1,l} \right) ^{-1} \left[ x_{l-3,l+2 } - \beta _{l-3,l+2} \right] 
    \\
    \vdots & \kern 2cm \vdots \kern 2cm \vdots
    \\
    u_{l,l+s} & = \left(\lambda _{l-3,l-2,l-1,l,l+s}\cdot u_{l-3,l-2}u_{l-2,l-1}u_{l-1,l}\right) ^{-1} .\left[ x_{l-3,l+s} - \beta _{l-3,l+s} \right] .
\end{align*} 

\noindent The other entries of $u_{i_0}$ are taken to be zero. For all $i\neq i_0$ we take $u_i = \diag \left( a_{11}(i), a_{22}(i), \ldots ,a_{mm}(i)\right) $.

\vspace{0.3cm}
\hf For general $m$ and $t\geq 2$ we can choose the matrix $u_{i_0}$ as follows if $p_{\underbrace{i_0i_0\ldots i_0}_{t-\text{times}}}\neq 0$ for some $i_0$. For $ t \leq l \leq m-1$ and $1\leq s\leq m-l$, choose the entries to be 
\begin{align*}
    u_{12} & = u_{23} = \cdots = u_{t-1,t} = 1\\
    u_{l,l+1} & = \left( \lambda_{l-t+1, \ldots, l, l+1}\cdot u_{l-t+1,l-t+2}\cdots u_{l-1,l} \right) ^{-1} \left[ x_{l-t+1,l+1} \right] 
    \\
    u_{l,l+2} & = \left( \lambda _{l-t+1,\ldots, l,l+2} \cdot u_{l-t+1,l-t+2}\cdots u_{l-1,l} \right) ^{-1} \left[ x_{l-t+1,l+2 } - \beta _{l-t+1,l+2} \right] 
    \\
    \vdots & \kern 2cm \vdots \kern 2cm \vdots \kern 4cm \vdots
    \\
    u_{l,l+s} & = \left(\lambda _{l-t+1,\ldots ,l,l+s}\cdot u_{l-t+1,l-t+2}\cdots u_{l-1,l}\right) ^{-1} .\left[ x_{l-t+1,l+s} - \beta _{l-t+1,l+s} \right] .
\end{align*}

\noindent The other entries of $u_{i_0}$ are taken to be zero. For all $i\neq i_0$ we take $u_i = \diag \left( a_{11}(i), a_{22}(i), \ldots ,a_{mm}(i)\right) $.

\hf Therefore, for a polynomial $p$ of order $t$, if the entries of $X \in T_m(K)^{(t-1)}$ satisfy $x_{i,i+l} \neq 0$ for $1\leq i\leq m-t$ and $t\leq l \leq m-i$, then $X\in p\left( T_m(K) \right) $. Hence, we have proved the following proposition.

\begin{proposition}\label{prop:Waring_all_i_same}
    Suppose $\ord = t\geq 2$ and $I_p=1$. Then $p \left(\tk[m]\right) +  p \left(\tk[m]\right) = T_m(K)^{(t-1)}$.  
\end{proposition}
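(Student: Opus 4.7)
The plan is to leverage the constructive argument established just before the proposition, which shows that any $X \in T_m(K)^{(t-1)}$ satisfying $x_{i,i+l} \neq 0$ for all $1 \leq i \leq m-t$ and $t \leq l \leq m-i$ lies in $p(T_m(K))$. Call such an $X$ \emph{admissible}. Since $T_m(K)^{(t-1)}$ is a vector subspace of $T_m(K)$, and $p(T_m(K)) \subseteq T_m(K)^{(t-1)}$ by \autoref{thm:order_t}, the containment $p(T_m(K)) + p(T_m(K)) \subseteq T_m(K)^{(t-1)}$ is immediate. So the entire content is the reverse inclusion.

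For the reverse inclusion, I would take an arbitrary $X = (x_{ij}) \in T_m(K)^{(t-1)}$ and produce a decomposition $X = Y + Z$ with both $Y, Z$ admissible. The construction of $Y$ is as follows: set $y_{ij} = 0$ whenever $j - i \leq t-1$, and for every position $(i,j)$ with $j - i \geq t$, choose $y_{ij} \in K \setminus \{0, x_{ij}\}$. This choice is possible since $K$ is algebraically closed and hence infinite, so the two forbidden values leave infinitely many options. By construction $Y \in T_m(K)^{(t-1)}$ is admissible, hence $Y \in p(T_m(K))$.

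Now set $Z := X - Y$. Then $Z \in T_m(K)^{(t-1)}$, and for each relevant position $(i,j)$ with $j - i \geq t$ we have $z_{ij} = x_{ij} - y_{ij} \neq 0$ because $y_{ij} \neq x_{ij}$ by choice. Therefore $Z$ is admissible as well, so $Z \in p(T_m(K))$, and $X = Y + Z$ gives the desired decomposition.

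There is no real obstacle: the hard work was already carried out in the explicit algorithm for $u_{i_0}$ preceding the proposition (which in turn relies on \autoref{prop:ifOneNonzeroThenManynonzero} to guarantee the nonvanishing of all the $\lambda$'s simultaneously). The only thing the Waring step itself uses is the trivial fact that an infinite field admits a splitting of any scalar avoiding a prescribed finite set of forbidden values at each coordinate. In particular, the constant $d$ in the Waring problem for the case $I_p = 1$ is simply $d = 2$.
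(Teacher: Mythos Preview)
Your proof is correct and is exactly the approach the paper takes: the paper presents the explicit construction of $u_{i_0}$ showing that every admissible $X$ lies in $p(T_m(K))$, then simply declares ``Hence, we have proved the following proposition,'' leaving the splitting $X = Y + Z$ into two admissible summands implicit. You have merely spelled out that final step, which is indeed trivial over an infinite field.
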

Next we turn to the case when $|\{i_1,i_2,\ldots,i_t\}|=t$ and $p_{i_{\sigma(1)}i_{\sigma(2)}\cdots i_{\sigma(1)}}\neq 0$ for all $\sigma\in\langle(1,2,\ldots,t)\rangle$. We start with two examples for the cases $(m,t)=(6,2)$ and $(m,t)=(6,3)$.
\begin{example} Let us consider the first case, $m=6,~t=2$, $I_p = 2$, and $p_{i_0j_0}\cdot p_{j_0i_0} \neq 0$. 
Then using \autoref{prop:ifOneNonzeroThenManynonzero}, we can choose suitable elements of $K^6$ such that
\begin{align*}
    \lambda_{\sigma(1)\sigma(2)\sigma(3)}=p_{i_0j_0}\left(\avec{\sigma(1)\sigma(2)\sigma(3)}\right)\neq 0\\
    \text{and }\mu_{\sigma(1)\sigma(2)\sigma(3)}=p_{j_0i_0}\left(\avec{\sigma(1)\sigma(2)\sigma(3)}\right)\neq 0,
\end{align*}
for all $\sigma\in S_6$. Now let us choose the following $X\in T_6(K)^{(1)}$ with $x_{ij}\neq 0$. We claim that $X\in p\left(T_6(K)\right)$. We have,
    \[
   X =  \begin{pmatrix}
        0 & 0 & x_{13} & x_{14} & x_{15} &  x_{16} \\
        0 & 0 & 0 & x_{24} & x_{25} &  x_{26} \\
        0 & 0 & 0 & 0 & x_{35} &  x_{36} \\
        0 & 0 & 0 & 0 & 0 &  x_{46} \\
        0 & 0 & 0 & 0 & 0 &  0 \\
        0 & 0 & 0 & 0 & 0 &  0 \\
    \end{pmatrix}
\]
For $i=i_0$ we choose,
\color{black}
\begin{align*}
    u_{i}=\left(
    \begin{array}{cccccc}
        a_{11} & 1 & 0 & 0 & 0 & 0 
        \\
        0 & a_{22} & 0 & 0 & 0 &  0
        \\
        0 & 0 & a_{33} & \left( \mu_{234}u_{23j} \right)^{-1} x_{24} & \left( \mu _{235}u_{23j} \right) ^{-1} \left[ x_{25}- \beta _{25} \right]  & \left( \mu _{236}u_{23j} \right) ^{-1} \left[ x_{26}- \beta _{26} \right] 
        \\
        0 & 0 & 0 & a_{44} & 0 & 0
        \\
        0 & 0 & 0 & 0 & a_{55} & \left( \mu _{456} u_{45j} \right) ^{-1} x_{46}
        \\
        0 & 0 & 0 & 0 & 0 &  a_{66} 
        \\
    \end{array} 
    \right),
\end{align*}  
\color{black}
and for $j=j_0$ we take $u_j$ to be the matrix
\color{black}
\begin{align*}
    \left(
        \scalemath{0.86}{
            \begin{array}{cccccc}
                a_{11} & 0 & 0 & 0 & 0 & 0 
                \\
                0 & a_{22} & \left( \lambda _{123}u_{12i} \right)^{-1}x_{13}  & \left( \lambda _{124}u_{12i} \right) ^{-1}  \left[ x_{14} - \beta _{14} \right]  & \left( \lambda _{125}u_{12i} \right) ^{-1} \left[ x_{15} - \beta _{15} \right]  &  \left( \lambda _{126}u_{12i} \right) ^{-1}  \left[ x_{16}- \beta _{16} \right] 
                \\
                0 & 0 & a_{33} &  0 & 0 & 0
                \\
                0 & 0 & 0 & a_{44} & \left( \lambda _{345}u_{34i} \right) ^{-1} x_{35} & \left( \lambda _{346}u_{34i} \right) ^{-1}  \left[ x_{36}-\beta _{36} \right] 
                \\
                0 & 0 & 0 & 0 & a_{55} & 0
                \\
                0 & 0 & 0 & 0 & 0 &  a_{66} 
            \end{array}
        } 
    \right),
\end{align*}
\color{black}
for suitable choice of $\beta_{ab}$'s.
For all other $i\neq i_0,j_0$ we take $u_i=\diag\left(a_{11}(i),a_{22}(i),\ldots, a_{66}(i)\right)$. It can be checked that with the given choice we get that $X\in p\left(T_6(K)\right)$.
\end{example}
\begin{example}
    
Let us consider the next case, $m=6,~t=3$, $I_p=3$,  and $p_{i_0j_0k_0}\cdot p_{j_0k_0i_0}\cdot p_{k_0i_0j_0}\neq 0$. Then using \autoref{prop:ifOneNonzeroThenManynonzero}, we can choose suitable elements of $K^6$ such that
\begin{align*}
    \lambda^{abc}_{\sigma(1)\sigma(2)\sigma(3)\sigma(4)}=p_{abc}\left(\avec{\sigma(1)\sigma(2)\sigma(3)\sigma(4)}\right)\neq 0,
\end{align*}
for all $\sigma\in S_6$ and $(a,b,c)\in\{(i_0,j_0,k_0),(j_0,k_0,i_0),(k_0,i_0,j_0)\}$. Now let us choose the following $X\in T_6(K)^{(2)}$ with $x_{ij}\neq 0$. We claim that $X\in p\left(T_6(K)\right)$. We have the matrix,
\color{black}

\[
   X =  \begin{pmatrix}
        0 & 0 & 0 & x_{14} & x_{15} &  x_{16} \\
        0 & 0 & 0 & 0 & x_{25} &  x_{26} \\
        0 & 0 & 0 & 0 & 0 &  x_{36} \\
        0 & 0 & 0 & 0 & 0 &  0 \\
        0 & 0 & 0 & 0 & 0 &  0 \\
        0 & 0 & 0 & 0 & 0 &  0 \\
    \end{pmatrix}. 
\]
For making the entries a little neater, we use the notation $u_{abw}$ to denote the $ab$-th entry of the matrix $u_w$. For $i=i_0$ take $u_{i_0}$ to be,
\color{black}
\begin{align*}
    \left(
        \begin{array}{cccccc}
        a_{11} & 1 & 0 & 0 & 0 & 0
        \\
        0 & a_{22} & 0 & 0 & 0 & 0
        \\
        0 & 0 & a_{33} & 0 & 0 & 0
        \\
        0 & 0 & 0 & a_{44} & \left( \lambda ^{jki}_{2345}u_{23j}u_{34k} \right) ^{-1} x_{25} & \left( \lambda ^{jki}_{2346}u_{23j}u_{34k} \right) ^{-1} \left[ x_{26}-\beta _{26} \right]  
        \\
        0 & 0 & 0 & 0 & a_{55} & 0
        \\
        0 & 0 & 0 & 0 & 0 &  a_{66}
    \end{array} 
    \right),
\end{align*}
\color{black}
for $j=j_0$ the matrix $u_{j_0}$ to be
\color{black}
\begin{align*}
    \left(
    \begin{array}{cccccc}
        a_{11} & 0 & 0 & 0 & 0 & 0 
        \\
        0 & a_{22} & 1 & 0 & 0 & 0 
        \\
        0 & 0 & a_{33} & 0 & 0 & 0 
        \\
        0 & 0 & 0 & a_{44} & 0 & 0 
        \\
        0 & 0 & 0 & 0 & a_{55} &  \left( \lambda ^{kij}_{3456}u_{34k}u_{45i} \right) ^{-1}  x_{36}
        \\
        0 & 0 & 0 & 0 & 0 &  a_{66} 
    \end{array} 
    \right)
\end{align*}   
\color{black}
and finally for $k=k_0$ the matrix $u_{k_0}$ to be
\color{black}
\begin{align*}
    \left(
        \scalemath{0.9}{
            \begin{array}{cccccc}
                a_{11} & 0 & 0 & 0 & 0 & 0 
                \\
                0 & a_{22} & 0 & 0 & 0 & 0 
                \\
                0 & 0 & a_{33} & \left( \lambda _{1234}^{ijk} u_{12i}u_{23j} \right) ^{-1} x_{14} & \left( \lambda ^{ijk}_{1235}u_{12i}u_{23j} \right) ^{-1} \left[ x_{15}-\beta _{15} \right]  & \left( \lambda ^{ijk}_{1236}u_{12i}u_{23j} \right) ^{-1} \left[ x_{16}-\beta _{16} \right]
                \\
                0 & 0 & 0 & a_{44} & 0 & 0
                \\
                0 & 0 & 0 & 0 & a_{55} & 0 
                \\
                0 & 0 & 0 & 0 & 0 &  a_{66} 
                \\
            \end{array}
        } 
    \right),
\end{align*}
\color{black}
for suitable choice of $\beta_{ab}$'s.
For all other $i\neq i_0,j_0,k_0$ we take $u_i=\diag\left(a_{11}(i),a_{22}(i),\ldots, a_{66}(i)\right)$. 
It can be checked that with the given choice, we get that $X\in p\left(T_6(K)\right)$.
\end{example}
Note that in the above two examples we get that $p\left(T_m(K)\right)+p\left(T_m(K)\right)=T_{m}(K)^{t-1}$. So let us now work out the general case of $m$ and $t$.  Hereafter, the notation $u_w(a,b)$ will be used to denote the $(a,b)^{\text{th}}$ entry of the matrix $u_w$.

\color{black}
 Remember that for all $i\neq i_j$, $1\leq j\leq t$ we are fixing $u_i=\diag(a_{11}(i),a_{22}(i),\ldots, a_{mm}(i))$. Denote 
 \[
    p_{a_1,a_2,\ldots ,a_t} \left( \avec{\alpha _1,\alpha _2,\ldots \alpha _{t+1}} \right) = \lambda^{a_1,a_2,\ldots, a_t}_{\alpha _1,\alpha _2,\ldots \alpha _{t+1}}.
 \]

Note that using \autoref{prop:ifOneNonzeroThenManynonzero}, we fix all the concentered $\lambda$'s to be nonzero. Now we start with the recursive steps. 

\vspace{0.2cm}
\noindent Step $1$ : We fix all $(l,l+1)$-th entries of $u_i$'s.in this step. This step is further divided into several steps.

\noindent Step $(1,1)$ : We fix 
\[
u_{i_j}(j,j+1)=\begin{cases}
1 & \text{ if }1\leq j\leq t-1\\
\left(\lambda^{i_1,i_2,\ldots,i_{t-1},i_{t}}_{1,2,\ldots,t,t+1}\right)^{-1}
x_{1,t+1} & \text{ if }j=t
\end{cases}.
\]

\noindent Step $(1,2)$ : We take $u_{i_1}(t+1,t+2)=\left(\lambda^{i_2,i_3,\ldots,i_{t},i_{1}}_{2,3\ldots,t+1,t+2}u_{i_t}(t,t+1)\right)^{-1}
x_{2,t+2}$.

\noindent Step $(1,3)$ : We take $u_{i_2}(t+2,t+3)=\left(\lambda^{i_3,i_4,\ldots,i_{1},i_{2}}_{3,4\ldots,t+2,t+3}u_{i_t}(t,t+1)u_{i_1}(t+1,t+2)\right)^{-1}
x_{3,t+3}$.

\vspace{0.2cm}
\noindent In general we take 
\begin{align*}
    u_{i_l} \left(kt+l,kt+l+1\right) & = \left(\lambda^{i_{l+1},i_{l+2},\ldots,i_{l}}_{(k-1)t+l+1,(k-1)t+l+2,\ldots,kt+l+1} \right. 
    \\
    & \kern 1cm 
    \cdot u_{i_{l+1}}\left((k-1)t+l+1,(k-1)t+l+2\right) \\
    & \kern 1cm \cdot u_{i_{l+2}}\left((k-1)t+l+2,(k-1)t+l+3\right) 
    \\
    & \kern 1cm \left. \cdot \cdots \cdot u_{i_{l-1}}\left(kt+l-1,kt+l\right) \right)^{-1}  \cdot x_{(k-1)t+l+1, kt+l+1}.
\end{align*}
Here if $l+s \geq t$, then we take $\text{ mod }t$ value with the assumption that $0$ will be replaced by $t$. For example, if $m=8$ and $t=3$ then $\lambda^{i_5,i_6,i_7}_{5,6,7,8}$ will be replaced by $\lambda ^{i_2,i_3,i_1}_{5,6,7,8}$. 

\noindent Step $(1,m-2t+2)$ : All other $(l,l+1)$ entries of $u_{i_j}$ for $1\leq j\leq t$ will be taken to be zero. 

\noindent Step $2$ : We fix all $(l,l+2)$-th entries of $u_i$'s.in this step. This step is further divided into several steps.

\noindent Step $(2,1)$: Fix $u_{i_t}(t,t+2)=
\left(\lambda^{i_1,i_2,\ldots,i_{t-1},i_{t}}_{1,2,\ldots,t,t+2}\right)^{-1}
\left(x_{1,t+2}-\beta_{1,t+2}\right)$
for a suitable $\beta_{1,t+2}\in K$.

\noindent Step $(2,2)$: We take $u_{i_1}(t+1,t+3)=\left(\lambda^{i_2,i_3,\ldots,i_{t},i_{1}}_{2,3\ldots,t+1,t+3}u_{i_t}(t,t+1)\right)^{-1}
\left(x_{2,t+3}-\beta_{2,t+3}\right)$
for a suitable $\beta_{2,t+3}\in K$.

\noindent Step $(2,3)$: We take $u_{i_2}(t+2,t+4)=\left(\lambda^{i_2,i_3,\ldots,i_{t},i_{1}}_{2,3\ldots,t+1,t+3}u_{i_t}(t,t+1)u_{i_1}(t+1,t+2)\right)^{-1}
\left(x_{3,t+4}-\beta_{3,t+4}\right)$
for a suitable $\beta_{3,t+4}\in K$.

\vspace{0.2cm}
\noindent In general we take 
\begin{align*}
    u_{i_l} \left(kt+l,kt+l+2\right) & = \left(\lambda^{i_{l+1},i_{l+2},\ldots,i_{l}}_{(k-1)t+l+1,(k-1)t+l+2,\ldots,kt+l,kt+l+2} \right. 
    \\
    & \kern 1cm 
    \cdot u_{i_{l+1}}\left((k-1)t+l+1,(k-1)t+l+2\right) \\
    & \kern 1cm \cdot u_{i_{l+2}}\left((k-1)t+l+2,(k-1)t+l+3\right) 
    \\
    & \kern 1cm \left. \cdot \cdots \cdot u_{i_{l-1}}\left(kt+l-1,kt+l\right) \right)^{-1} \\
    & \kern 1cm \left(x_{(k-1)t+l+1, kt+l+2}-  \beta _{(k-1)t+l+1, kt+l+2} \right)
\end{align*}
for a suitable $\beta _{(k-1)t+l+1, kt+l+2}\in K$. For $l+s$, the convention will be used as previously. 

\vspace{0.2cm}
\noindent In general, in the $T^{\text{th}}$-step we choose
\begin{align*}
    u_{i_l} \left(kt+l,kt+l+T\right) & = \left(\lambda^{i_{l+1},i_{l+2},\ldots,i_{l}}_{(k-1)t+l+1,(k-1)t+l+2,\ldots,kt+l,kt+l+T} \right. 
    \\
    & \kern 1cm 
    \cdot u_{i_{l+1}}\left((k-1)t+l+1,(k-1)t+l+2\right) \\
    & \kern 1cm \cdot u_{i_{l+2}}\left((k-1)t+l+2,(k-1)t+l+3\right) 
    \\
    & \kern 1cm \left. \cdot \cdots \cdot u_{i_{l-1}}\left(kt+l-1,kt+l\right) \right)^{-1} \\
    & \kern 1cm \left(x_{(k-1)t+l+1, kt+l+T}-  \beta _{(k-1)t+l+1, kt+l+T} \right)
\end{align*}
for a suitable $\beta _{(k-1)t+l+1, kt+l+T}\in K$. For $l+s$, the convention will be used as previously. 

\color{black}
Hence we have the following proposition.
\begin{proposition}\label{prop:Waring_all_sigma_nonzero}
    Suppose $\ord = t$, $I_p=t$ and $p_{i_{\sigma (1)},i_{\sigma (2)},\ldots, i_{\sigma (t)}}$ is not identically zero for $\sigma \in \left\langle (1,\ldots ,t) \right\rangle $.  Then $p \left(\tk[m]\right) +  p \left(\tk[m]\right) = T_m(K)^{(t-1)}$.  
\end{proposition}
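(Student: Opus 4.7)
My plan is to exhibit a Zariski open dense subset $S \subseteq T_m(K)^{(t-1)}$ that is entirely contained in $p(T_m(K))$, and then derive the sum decomposition by a splitting argument that works over any infinite field. The inclusion $p(T_m(K)) + p(T_m(K)) \subseteq T_m(K)^{(t-1)}$ is immediate from \autoref{thm:order_t} since $T_m(K)^{(t-1)}$ is closed under addition, so only the reverse inclusion needs work.

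First I would invoke \autoref{prop:ifOneNonzeroThenManynonzero}: because $I_p = t$ and every $p_{i_{\sigma(1)}, \ldots, i_{\sigma(t)}}$ with $\sigma \in \langle (1, \ldots, t) \rangle$ is nonzero, a single choice of vectors $\vec{a}_{11}', \ldots, \vec{a}_{mm}' \in K^n$ can be made so that for every cyclic $\sigma$ and every strictly increasing tuple $\alpha_1 < \cdots < \alpha_{t+1}$ from $\{1, \ldots, m\}$, the scalar
\[
\lambda^{i_{\sigma(1)}, \ldots, i_{\sigma(t)}}_{\alpha_1, \ldots, \alpha_{t+1}} := p_{i_{\sigma(1)}, \ldots, i_{\sigma(t)}}\!\left(\vec{\mathbf{a}}_{\{\alpha_1, \ldots, \alpha_{t+1}\}}'\right)
\]
is nonzero. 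I would then define
\[
S := \left\{X \in T_m(K)^{(t-1)} : X_{r, r+t} \neq 0 \text{ for all } 1 \leq r \leq m-t \right\},
\]
which is Zariski open and dense in the affine space $T_m(K)^{(t-1)}$.

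For $X \in S$, I would construct $\vec{\mathbf{u}} = (u_1, \ldots, u_n) \in T_m(K)^n$ exactly via the recursive Steps 1 through $m-t$ described just before the statement: set $u_i = \diag(a_{11}(i), \ldots, a_{mm}(i))$ for $i \notin \{i_1, \ldots, i_t\}$, and for $i = i_l$ populate the superdiagonals of $u_{i_l}$ one at a time, cyclically rotating the roles of $i_1, \ldots, i_t$. The first superdiagonal entries are set so that $u_{i_l}(kt+l, kt+l+1)$ involves $x_{(k-1)t+l+1, kt+l+1}$ divided by the appropriate $\lambda$ and the prior superdiagonal entries. Higher superdiagonal entries $u_{i_l}(kt+l, kt+l+T)$ incorporate a correction $\beta$ absorbing the contributions of longer paths. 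All denominators are nonzero by the cyclic non-vanishing of the $\lambda$'s and because $X \in S$ guarantees the Step~1 entries are nonzero. An induction on $T$ (using \autoref{prop:polyEntry} and the fact that every $p_J$ with $|J| < t$ vanishes identically by the order hypothesis) then shows $p(\vec{\mathbf{u}}) = X$, hence $S \subseteq p(T_m(K))$.

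Finally, given an arbitrary $X \in T_m(K)^{(t-1)}$, both $S$ and the translate $X - S = \{Y : X-Y \in S\}$ are Zariski open dense in the irreducible affine variety $T_m(K)^{(t-1)}$. Since $K$ is infinite, $S \cap (X - S) \neq \emptyset$; choosing $Y_1$ in this intersection and setting $Y_2 := X - Y_1$ gives $Y_1, Y_2 \in S \subseteq p(T_m(K))$ with $Y_1 + Y_2 = X$, completing the proof.

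The main obstacle is the inductive verification that the recursive construction yields $p(\vec{\mathbf{u}}) = X$. The cyclic rotation of $i_1, \ldots, i_t$ across successive diagonals makes the indexing delicate, and the corrections $\beta$ must precisely cancel the contributions of the $p_J$ of size strictly greater than $t$ coming from longer paths visible on the $(r, r+T)$-entry via \autoref{prop:polyEntry}. Doing this cleanly likely requires a diagonal-by-diagonal induction that tracks which multi-indices can occur given that only $\{i_1, \ldots, i_t\}$ have non-diagonal entries and that those entries appear only on specified superdiagonals.
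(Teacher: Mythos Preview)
Your proposal is correct and follows essentially the same route as the paper: the paper's recursive Steps~1 through~$T$ (immediately preceding the proposition) construct $\vec{\mathbf{u}}$ with $p(\vec{\mathbf{u}})=X$ whenever the relevant $(r,r+t)$-entries of $X$ are nonzero, and then the two-summand decomposition follows because every element of $T_m(K)^{(t-1)}$ splits as a sum of two such matrices. Your Zariski-density formulation of the splitting step ($S$ and $X-S$ are dense open, hence meet) is a slightly cleaner packaging of what the paper leaves implicit, and your identification of $S=\{X:X_{r,r+t}\neq 0\}$ as the precise open set needed is sharper than the paper's phrasing elsewhere (which asks for \emph{all} super-diagonal entries to be nonzero), but the underlying argument is the same.
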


\hf We continue with some more examples before presenting the final proposition of this section. 

\begin{example}
    Consider the case $m=7$ and $t=3$. Let
    \[
        X = 
        \begin{pmatrix}
            0 & 0 & 0 & x_{14} & x_{15} & x_{16} &  x_{17} \\
            0 & 0 & 0 & 0 & x_{25} & x_{26} &  x_{27} \\
            0 & 0 & 0 & 0 & 0 & x_{36} &  x_{37} \\
            0 & 0 & 0 & 0 & 0 & 0 &  x_{47} \\
            0 & 0 & 0 & 0 & 0 & 0 & 0  \\
            0 & 0 & 0 & 0 & 0 & 0 & 0  \\
            0 & 0 & 0 & 0 & 0 & 0 & 0  \\
        \end{pmatrix}.
    \]
    We will show that the following matrices are in the image $p\left( T_7(K) \right) $. 
    \[
        X_1 = 
        \begin{pmatrix}
             R_1(X) \\
             0 \\
             0 \\
             R_4(X) \\
             0 \\
             0 \\
             0 \\
        \end{pmatrix}, 
        X_2 = 
        \begin{pmatrix}
             0 \\
             R_2(X) \\
             0 \\
             0 \\
             0 \\
             0 \\
             0 \\
        \end{pmatrix},
        \text{ and }
        X_3 = 
        \begin{pmatrix}
             0 \\
             0 \\
             R_3(X) \\
             0 \\
             0 \\
             0 \\
             0 \\
        \end{pmatrix},  
    \]
    where $R_i(X)$ is the $i^{\text{th}}$ row of the matrix $X$. At first let us show that $X_1 \in P\left( T_7(K) \right) $. Since $p\left( T_4(K) \right)\neq 0 $, there exists $i_1,i_2,i_3$ such that $p_{i_1,i_2,i_3} \neq 0$. Finally, using \autoref{prop:ifOneNonzeroThenManynonzero} we choose existence of $\vec{\mathbf{a}}_{11},\vec{\mathbf{a}}_{22},\ldots ,\vec{\mathbf{a}}_{77}$ such that
    \[
        \lambda ^{i_1,i_2,i_3}_{\sigma (1).\sigma (2),\sigma (3),\sigma (4)} = p_{i_1,i_2,i_3}\left( \avec{\sigma (1),\sigma (2),\sigma (3),\sigma (4)} \right) \neq 0
    \]
    for all $\sigma \in S_7$. 
    Consider
    \[
        u_{i_1}^{(1)} = 
        \begin{pmatrix}
            a_{11}(i_1) & 1 & 0 & 0 & 0 & 0 &  0 \\
            0 & a_{22}(i_1) & 0 & 0 & 0 & 0 &  0 \\
            0 & 0 & a_{33}(i_1) & 0 & 0 & 0 &  0 \\
            0 & 0 & 0 & a_{44}(i_1) & 1 & 0 &  0 \\
            0 & 0 & 0 & 0 & a_{55}(i_1) & 0 &  0 \\
            0 & 0 & 0 & 0 & 0 & a_{66}(i_1) &  0 \\
            0 & 0 & 0 & 0 & 0 & 0 &  a_{77}(i_1) \\
        \end{pmatrix},
    \] 

    \[
        u_{i_2}^{(1)} = 
        \begin{pmatrix}
            a_{11}(i_1) & 0 & 0 & 0 & 0 & 0 &  0 \\
            0 & a_{22}(i_1) & 1 & 0 & 0 & 0 &  0 \\
            0 & 0 & a_{33}(i_1) & 0 & 0 & 0 &  0 \\
            0 & 0 & 0 & a_{44}(i_1) & 0 & 0 &  0 \\
            0 & 0 & 0 & 0 & a_{55}(i_1) & 1 &  0 \\
            0 & 0 & 0 & 0 & 0 & a_{66}(i_1) &  0 \\
            0 & 0 & 0 & 0 & 0 & 0 &  a_{77}(i_1) \\
        \end{pmatrix}\text{ and }
    \] 

    \[
        u_{i_3}^{(1)} = 
        \scalemath{0.72}{
            \begin{pmatrix}
                a_{11}(i_1) & 0 & 0 & 0 & 0 & 0 &  0 \\
                0 & a_{22}(i_1) & 0 & 0 & 0 & 0 &  0 \\
                0 & 0 & a_{33}(i_1) & \left( \lambda ^{i_1,i_2,i_3}_{1,2,3,4} \right) ^{-1} x_{14} & \left( \lambda ^{i_1,i_2,i_3}_{1,2,3,5} \right) ^{-1} \left( x_{15}-\beta _{15} \right)  & \left( \lambda ^{i_1,i_2,i_3}_{1,2,3,6} \right) ^{-1} \left( x_{16}-\beta _{16} \right)  &  \left( \lambda ^{i_1,i_2,i_3}_{1,2,3,7} \right) ^{-1} \left( x_{17}-\beta _{17} \right) \\
                0 & 0 & 0 & a_{44}(i_1) & 0 & 0 &  0 \\
                0 & 0 & 0 & 0 & a_{55}(i_1) & 0 &  0 \\
                0 & 0 & 0 & 0 & 0 & a_{66}(i_1) & \left( \lambda ^{i_1,i_2,i_3}_{4,5,6,7} \right) ^{-1} x_{47} \\
                0 & 0 & 0 & 0 & 0 & 0 &  a_{77}(i_1) \\
            \end{pmatrix}
        },
    \] 
    for suitable choice of $\beta$'s. All other $u_{i}^{(1)}$ are taken to  be $\diag\left( a_{11}(i),a_{22}(i),\ldots ,a_{77}(i) \right) $.  If any of the two $i_j$'s are equal (say $i_0 $), then we choose $u_{i_0}^{(1)}$ to be the sum of $u_{i_j}^{(1)}$'s. For example, if $i_1=i_2=i_0$, then we take $u_{i_0}^{(1)} = u_{i_1}^{(1)} + u_{i_2}^{(1)}$, and $u_{i_3}$ remains same. Note that, with the above choice of matrices $p\left( \vec{u} \right) = X_1 $. Similarly, 

    \[
        u_{i_1}^{(2)} = 
        \begin{pmatrix}
            a_{11}(i_1) & 0 & 0 & 0 & 0 & 0 &  0 \\
            0 & a_{22}(i_1) & 1 & 0 & 0 & 0 &  0 \\
            0 & 0 & a_{33}(i_1) & 0 & 0 & 0 &  0 \\
            0 & 0 & 0 & a_{44}(i_1) & 0 & 0 &  0 \\
            0 & 0 & 0 & 0 & a_{55}(i_1) & 0 &  0 \\
            0 & 0 & 0 & 0 & 0 & a_{66}(i_1) &  0 \\
            0 & 0 & 0 & 0 & 0 & 0 &  a_{77}(i_1) \\
        \end{pmatrix},
    \] 

    \[
        u_{i_2}^{(2)} = 
        \begin{pmatrix}
            a_{11}(i_1) & 0 & 0 & 0 & 0 & 0 &  0 \\
            0 & a_{22}(i_1) & 0 & 0 & 0 & 0 &  0 \\
            0 & 0 & a_{33}(i_1) & 1 & 0 & 0 &  0 \\
            0 & 0 & 0 & a_{44}(i_1) & 0 & 0 &  0 \\
            0 & 0 & 0 & 0 & a_{55}(i_1) & 0 &  0 \\
            0 & 0 & 0 & 0 & 0 & a_{66}(i_1) &  0 \\
            0 & 0 & 0 & 0 & 0 & 0 &  a_{77}(i_1) \\
        \end{pmatrix}\text{ and }
    \] 

    \[
        u_{i_3}^{(2)} = 
        \scalemath{0.8}{
            \begin{pmatrix}
                a_{11}(i_1) & 0 & 0 & 0 & 0 & 0 &  0 \\
                0 & a_{22}(i_1) & 0 & 0 & 0 & 0 &  0 \\
                0 & 0 & a_{33}(i_1) & 0 & 0 & 0 & 0 \\
                0 & 0 & 0 & a_{44}(i_1) & \left( \lambda ^{i_1,i_2,i_3}_{2,3,4,5} \right) ^{-1} x_{25} & \left( \lambda ^{i_1,i_2,i_3}_{2,3,4,6} \right) ^{-1} \left( x_{26}-\beta _{26} \right)  & \left( \lambda ^{i_1,i_2,i_3}_{2,3,4,7} \right) ^{-1} \left( x_{27}-\beta _{27} \right) \\
                0 & 0 & 0 & 0 & a_{55}(i_1) & 0 &  0 \\
                0 & 0 & 0 & 0 & 0 & a_{66}(i_1) & 0 \\
                0 & 0 & 0 & 0 & 0 & 0 &  a_{77}(i_1) \\
            \end{pmatrix}
        },
    \] 
    for suitable choice of $\beta$'s. All other $u_{i}^{(2)}$ are taken to  be $\diag\left( a_{11}(i),a_{22}(i),\ldots ,a_{77}(i) \right) $. Similarly, as in the previous case (that is, considering the cases where two of them are equal), we get $X_2\in p\left( T_7(K) \right) $. Finally, taking 

    \[
        u_{i_1}^{(3)} = 
        \begin{pmatrix}
            a_{11}(i_1) & 0 & 0 & 0 & 0 & 0 &  0 \\
            0 & a_{22}(i_1) & 0 & 0 & 0 & 0 &  0 \\
            0 & 0 & a_{33}(i_1) & 1 & 0 & 0 &  0 \\
            0 & 0 & 0 & a_{44}(i_1) & 0 & 0 &  0 \\
            0 & 0 & 0 & 0 & a_{55}(i_1) & 0 &  0 \\
            0 & 0 & 0 & 0 & 0 & a_{66}(i_1) &  0 \\
            0 & 0 & 0 & 0 & 0 & 0 &  a_{77}(i_1) \\
        \end{pmatrix},
    \] 

    \[
        u_{i_2}^{(3)} = 
        \begin{pmatrix}
            a_{11}(i_1) & 0 & 0 & 0 & 0 & 0 &  0 \\
            0 & a_{22}(i_1) & 0 & 0 & 0 & 0 &  0 \\
            0 & 0 & a_{33}(i_1) & 0 & 0 & 0 &  0 \\
            0 & 0 & 0 & a_{44}(i_1) & 1 & 0 &  0 \\
            0 & 0 & 0 & 0 & a_{55}(i_1) & 0 &  0 \\
            0 & 0 & 0 & 0 & 0 & a_{66}(i_1) &  0 \\
            0 & 0 & 0 & 0 & 0 & 0 &  a_{77}(i_1) \\
        \end{pmatrix}\text{ and }
    \] 

    \[
        u_{i_3}^{(3)} = 
        \scalemath{0.8}{
            \begin{pmatrix}
                a_{11}(i_1) & 0 & 0 & 0 & 0 & 0 &  0 \\
                0 & a_{22}(i_1) & 0 & 0 & 0 & 0 &  0 \\
                0 & 0 & a_{33}(i_1) & 0 & 0 & 0 & 0 \\
                0 & 0 & 0 & a_{44}(i_1) & 0 & 0 & 0 \\
                0 & 0 & 0 & 0 & a_{55}(i_1) & \left( \lambda ^{i_1,i_2,i_3}_{3,4,5,6} \right) ^{-1} x_{36} & \left( \lambda ^{i_1,i_2,i_3}_{3,4,5,7} \right) ^{-1} \left( x_{37} - \beta_{37} \right) \\
                0 & 0 & 0 & 0 & 0 & a_{66}(i_1) & 0 \\
                0 & 0 & 0 & 0 & 0 & 0 &  a_{77}(i_1) \\
            \end{pmatrix}
        },
    \] 
    for suitable choice of $\beta$'s. All other $u_{i}^{(3)}$ are taken to  be $\diag\left( a_{11}(i),a_{22}(i),\ldots ,a_{77}(i) \right) $. Similarly, as in the previous cases (that is, considering the cases where two of them are equal), we get $X_3\in p\left( T_7(K) \right) $. 

    \vspace{0.2cm}
    \hf This proves that $X\in p\left( T_7(K) \right) + p\left( T_7(K) \right) + p\left( T_7(K) \right)$. Hence we have that
    \[
        T_7(K)^{(2)} = \underbrace{p\left( T_7(K) \right) + p\left( T_7(K) \right) + \cdots + p\left( T_7(K) \right) }_{6\text{-times}}.
    \] 
\end{example}
The above example motivates the following proposition. 
\begin{proposition}\label{prop:Waring-ord=t}
Let $p\in K[F_n]$ be a polynomial in non-commutative variables. Let $\ord =t$ and $I_p\geq 2$. 
Then 
\[
\underbrace{p\left(T_m(K)\right)+p\left(T_m(K)\right)+\ldots+p\left(T_m(K)\right)}_{d\text{-times}}=T_m(K)^{(t-1)},
\] where $d=\min\{t,m-t\}$.
\end{proposition}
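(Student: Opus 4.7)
The plan is to reduce the general $I_p\ge 2$ statement to the constructions already illustrated in the preceding $m=7$, $t=3$ example: decompose the target matrix $X\in T_m(K)^{(t-1)}$ into at most $d=\min\{t,m-t\}$ row-localized summands $X^{(1)},\ldots,X^{(d)}$, each of which individually lies in $p(T_m(K))$. Concretely, $X^{(k)}$ carries the rows of $X$ whose index lies in the $k$-th residue class
\[
\mathcal{R}_k=\{k,\,k+t,\,k+2t,\,\ldots\}\cap\{1,\ldots,m-t\},
\]
with every other row zero. Only rows $1,\ldots,m-t$ of $X$ can be nonzero in $T_m(K)^{(t-1)}$, and the $\mathcal{R}_k$'s partition $\{1,\ldots,m-t\}$ into $\min\{t,m-t\}$ non-empty classes, which is exactly the bound claimed on $d$.

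To realize a single summand $X^{(k)}$ as a polynomial image, fix a witness $(i_1,\ldots,i_t)$ with $p_{i_1,\ldots,i_t}\ne 0$ (such a witness exists since $\ord=t$) and apply \autoref{prop:ifOneNonzeroThenManynonzero} to fix vectors $\vec{\mathbf{a}}_{11},\ldots,\vec{\mathbf{a}}_{mm}\in K^n$ making every relevant
\[
\lambda^{i_1,\ldots,i_t}_{\alpha_1,\ldots,\alpha_{t+1}}=p_{i_1,\ldots,i_t}\bigl(\vec{\mathbf{a}}_{\alpha_1\alpha_1},\ldots,\vec{\mathbf{a}}_{\alpha_{t+1}\alpha_{t+1}}\bigr)
\]
nonzero. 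For every $r\in\mathcal{R}_k$ I install on $u_{i_l}^{(k)}$ the entry $1$ at position $(r+l-1,r+l)$ for $1\le l\le t-1$, and on $u_{i_t}^{(k)}$ the entries
\[
u_{i_t}^{(k)}(r+t-1,\,r+t+s)=\bigl(\lambda^{i_1,\ldots,i_t}_{r,r+1,\ldots,r+t-1,r+t+s}\bigr)^{-1}\bigl(x_{r,r+t+s}-\beta_{r,r+t+s}\bigr),
\]
for $0\le s\le m-r-t$, where $\beta_{r,r+t+s}$ absorbs every contribution to $[p(\vec{\mathbf{u}}^{(k)})]_{r,r+t+s}$ coming from multi-index polynomials $p_{j_1,\ldots,j_{t'}}$ of weight $t'>t$. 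These contributions depend only on entries of $u_{i_t}^{(k)}$ already fixed for smaller values of $s$, so the $\beta$'s are determined inductively in $s$. The remaining $u_i^{(k)}$'s are taken diagonal as $\diag(a_{11}(i),\ldots,a_{mm}(i))$, and if two or more of the $i_j$'s coincide we collapse the corresponding $u^{(k)}$'s by summing their non-diagonal patterns, exactly as done in the proof of \autoref{prop:Waring_all_sigma_nonzero}.

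The crucial non-interference observation is that distinct $r,r'\in\mathcal{R}_k$ are spaced by at least $t$, so the block of row/column indices $\{r,r+1,\ldots,r+t-1\}$ touched by the pattern built for $r$ is disjoint from the analogous block built for $r'$. Combined with the order hypothesis, which kills every weight-less-than-$t$ contribution to any upper-triangular entry of $p(\vec{\mathbf{u}}^{(k)})$, this ensures that the only paths of length $t$ from row $r$ to column $r+t+s$ in the expansion of $p$ are those threaded through the $r$-pattern itself, and not through any $r'$-pattern; consequently $[p(\vec{\mathbf{u}}^{(k)})]_{r,c}$ matches $x_{r,c}$ on $\mathcal{R}_k$ and vanishes on all other rows, giving $p(\vec{\mathbf{u}}^{(k)})=X^{(k)}$. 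The main obstacle is precisely this non-interference bookkeeping in the presence of coinciding $i_j$'s and of higher-weight correction terms; once it is verified, summing $p(\vec{\mathbf{u}}^{(1)})+\cdots+p(\vec{\mathbf{u}}^{(d)})=X$ concludes the proof, and the reverse inclusion is automatic from \autoref{thm:order_t}.
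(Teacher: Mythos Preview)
Your approach mirrors the paper's exactly: decompose $X$ by row residue classes modulo $t$, place $1$'s on staggered superdiagonal positions of $u_{i_1}^{(k)},\ldots,u_{i_{t-1}}^{(k)}$, load the target entries (with higher-weight corrections $\beta$) into $u_{i_t}^{(k)}$, and collapse coinciding indices by summing the off-diagonal patterns.

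However, your non-interference claim has a gap that the paper's argument also does not close. You check that length-$t$ paths \emph{starting from} a row $r\in\mathcal{R}_k$ produce the intended entries, but you never check that rows $\alpha\notin\mathcal{R}_k$ receive zero --- and in general they do not. In the $m=7$, $t=3$ case with $\mathcal{R}_1=\{1,4\}$, look at the $(2,5)$-entry of $p(\vec{\mathbf{u}}^{(1)})$: the unique length-$3$ path $2\to 3\to 4\to 5$ picks up $u_{i_2}^{(1)}(2,3)=1$, $u_{i_3}^{(1)}(3,4)=(\lambda^{i_1i_2i_3}_{1234})^{-1}x_{14}$, and $u_{i_1}^{(1)}(4,5)=1$, so
\[
\bigl[p(\vec{\mathbf{u}}^{(1)})\bigr]_{2,5}=p_{i_2,i_3,i_1}\bigl(\vec{\mathbf a}_{22},\vec{\mathbf a}_{33},\vec{\mathbf a}_{44},\vec{\mathbf a}_{55}\bigr)\,(\lambda^{i_1i_2i_3}_{1234})^{-1}x_{14},
\]
which the hypothesis $I_p\ge 2$ alone does not force to vanish. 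The culprit is the bridging entry $u_{i_t}^{(k)}(r+t-1,\,r+t)$: it links one block to the next and enables length-$t$ paths that start mid-block and realize a \emph{cyclic shift} $(i_l,\ldots,i_t,i_1,\ldots,i_{l-1})$ of the chosen multi-index. Your $\beta$'s live only in row $r+t-1$ and therefore cannot cancel these stray contributions sitting in rows $r+1,\ldots,r+t-1$. The paper's separate treatment of the all-cyclic-shifts-nonzero case in \autoref{prop:Waring_all_sigma_nonzero} --- which uses a genuinely different interleaved construction that \emph{harnesses} these shifts to fill all rows at once rather than trying to suppress them --- is a symptom of exactly this difficulty; the general $I_p\ge 2$ construction you and the paper write down does not, as stated, produce $X^{(k)}$ on the nose.
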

\begin{proof}
Let us continue with the notations of the discussion at the beginning of this section.
    Let us write $X=X_1+X_2+\cdots+X_t$, with 
    \[R_b(X_a)=\begin{cases}
        R_b(X)&\text{ if }b\equiv a\pmod{t}\\
        0&\text{ otherwise}
    \end{cases},\]
\noindent where $R_b(A)$ denotes the $b^\text{th}$ row of the matrix $A$, for $1\leq a\leq t$. We claim that $X_a\in p\left(T_m(K)\right)$ for all $1\leq a\leq t$.
Note that if $m-t<t$, then in the above decomposition of $X$ we will be left with only $m-t$ many terms. This justifies the choice of $d$ in the theorem. It remains to prove our claim.
We go through the following steps for constructing the matrices $u_{i_j}^{(a)}$ for $1\leq a\leq d$. Firstly set the diagonal of $u_{i_j}$ to be $\diag(\Vec{a}_{11}(i_j),\Vec{a}_{22}(i_j),\ldots,\Vec{a}_{mm}(i_j))$. The next choices will be made in several steps.

\noindent Step $1$: Choose $u_{i_j}^{(a)}\left((k-1)t+j+a-1,(k-1)t+j+a\right)=1$, for all $1\leq j\leq t-1$, $1\leq a\leq \min{\{t,m-t\}}$ and a suitable choice of $k\geq 1$ and $kt+a\leq m$.
We set 
\[
u_{i_t}^{(a)}\left(kt+a-1,kt+a\right)=\left(\lambda^{i_1,i_2,\ldots,i_t}_{(k-1)t+a,(k-1)t+a+1,\ldots, kt+a}\right)^{-1}x_{(k-1)t+a,kt+a},
\]
for the choices of $k$ as above.
All the other $(c,c+1)$ entries of $u_{i_j}^{(a)}$ are taken to be zero. 

\noindent Step $T+1$ $(T\geq 1,~kt+a\leq m)$: For $kt+a+T\leq m$, 
We define
\begin{align*}
    u_{i_t}^{(a)}\left(kt+a-1,kt+a+T\right)=\left(\lambda^{i_1,i_2,\ldots,i_t}_{(k-1)t+a,(k-1)t+a+1,\ldots,kt+a-1,kt+a+T}\right)^{-1}x_{(k-1)t+a,kt+a+T}
\end{align*}
\color{black}
for a suitable choice of $\beta's$. All other entries of $U_{i_j}^{(a)}$ are taken to be zero. Now let us define the set $\Pi(j)=\{\gamma|i_{\gamma}=i_j\}$ and $\pi(j)=\min \Pi(j)$.
Further, take  $\Delta(p)=\{\pi(j)|1\leq j\leq t\}$. For $j\in \Delta(p)$, we define
\begin{align*}
    u_{i_j}^{(a)}=\sum\limits_{\gamma\in\Pi(j)}u_{i_{\gamma}}^{(a)}.
\end{align*}
For all $j\in\{1,2,\ldots,m\}\setminus \Delta(p)$, define $u_{i_j}^{(a)}=\diag(\Vec{a}_{11}(i_j),\Vec{a}_{22}(i_j),\ldots,\Vec{a}_{mm}(i_j))$.
Then with the prescribed choice we have that $X_a=p\left(u_{1}^{(a)},u_{2}^{(a)},\ldots,u_{n}^{(a)}\right)\in p\left(T_m(K)\right)$.
This proves that if all the $x_{ij}$'s of $X$ are nonzero then we have 
\[
X\in \underbrace{p\left(T_m(K)\right)+p\left(T_m(K)\right)+\ldots+p\left(T_m(K)\right)}_{\min \{t,m-t\}\text{-times}}.
\]
The result follows immediately.
\end{proof}
We note down here the following result in case $\ord=2$, where it can be shown $d=2$ without using \autoref{prop:Waring-ord=t}.
\begin{proposition}
    Let $p\in K[F_n]$ be a polynomial in non-commutative variables. Suppose $\ord = 2$. Then $p \left(\tk[m]\right) +  p \left(\tk[m]\right) = T_m(K)^{(1)}$. 
\end{proposition}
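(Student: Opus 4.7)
One direction is free: by \autoref{thm:order_t} applied with $t=2$ we have $p(T_m(K))\subseteq T_m(K)^{(1)}$, so $p(T_m(K))+p(T_m(K))\subseteq T_m(K)^{(1)}$. The task is the reverse containment, and the plan is to deduce it from the propositions already established in this subsection via a case split on the quantity $I_p$.

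Since $\ord = 2$ and $p(T_3(K)) \neq 0$, we have $I_p\in\{1,2\}$. In the case $I_p=1$, the conclusion follows at once from \autoref{prop:Waring_all_i_same} (with $t=2$), which already yields $p(T_m(K))+p(T_m(K))=T_m(K)^{(1)}$. In the case $I_p=2$, there exist distinct indices $i_0\neq j_0$ such that $p_{i_0 j_0}\not\equiv 0$. If additionally $p_{j_0 i_0}\not\equiv 0$, the hypothesis of \autoref{prop:Waring_all_sigma_nonzero} is met (note $\langle(1,2)\rangle=S_2$), and we are done. The only subcase requiring a direct construction is therefore the asymmetric one: $p_{i_0 j_0}\not\equiv 0$ but $p_{j_0 i_0}\equiv 0$.

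For this remaining subcase the plan is to split $X=(x_{\alpha\beta})\in T_m(K)^{(1)}$ as $X=X_1+X_2$ by row parity: $X_a$ retains the row $R_b(X)$ of $X$ when $b\equiv a\pmod{2}$ and is zero in every other row. I would then realise each $X_a$ as an image $p(\vec{\mathbf{u}}^{(a)})$ by mimicking the explicit constructions preceding \autoref{prop:Waring-ord=t}: use \autoref{prop:ifOneNonzeroThenManynonzero} to pick vectors $\vec{\mathbf{a}}_{\alpha\alpha}$ making every scalar $\lambda^{i_0 j_0}_{\alpha\beta\gamma} \defeq p_{i_0 j_0}(\vec{\mathbf{a}}_{\alpha\alpha},\vec{\mathbf{a}}_{\beta\beta},\vec{\mathbf{a}}_{\gamma\gamma})$ nonzero; set $u_k^{(a)}=\diag(\vec{\mathbf{a}}_{11}(k),\ldots,\vec{\mathbf{a}}_{mm}(k))$ for $k\neq i_0,j_0$; and then fill in $u_{i_0}^{(a)}(\alpha,\alpha+1)$ and $u_{j_0}^{(a)}(\alpha+1,\alpha+2)$ so that the required $(\alpha,\alpha+2)$-entry of $X_a$ appears via the single ordered product $a_{\alpha,\alpha+1}(i_0)\,a_{\alpha+1,\alpha+2}(j_0)$. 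Longer entries $x_{\alpha,\alpha+k}$ with $k\geq 3$ are treated by setting $u_{i_0}^{(a)}(\alpha,\alpha+k-1)$ or $u_{j_0}^{(a)}(\alpha+1,\alpha+k)$ inductively outward, using a $\beta$-correction to subtract off the contributions of already-assigned shorter entries, exactly in the style of \autoref{thm:order_1} and \autoref{thm:mainThmFor_n=4}.

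The main obstacle is twofold: keeping the cross-terms generated by previously-assigned entries bookkept cleanly so that the outward induction closes, and verifying that only the ordered product $u_{i_0}u_{j_0}$ is ever needed (never $u_{j_0}u_{i_0}$), so that the vanishing of $p_{j_0 i_0}$ does no harm. Both are taken care of by the parity splitting, which forces the supports of the nonzero off-diagonal entries of $u_{i_0}^{(a)}$ and $u_{j_0}^{(a)}$ into adjacent slots in the correct order. The edge case $m=3$ is handled separately and trivially, since \autoref{thm:order_t} there reads $p(T_3(K))=T_3(K)^{(1)}$ (by the $n=3$ theorem of Wang--Zhou--Luo cited in the introduction), so the two-summand identity already holds with one summand.
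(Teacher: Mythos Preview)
Your case split and the handling of the first two cases ($I_p=1$ via \autoref{prop:Waring_all_i_same}, and $I_p=2$ with $p_{i_0j_0}p_{j_0i_0}\not\equiv 0$ via \autoref{prop:Waring_all_sigma_nonzero}) match the paper exactly. The divergence is in the asymmetric subcase $p_{i_0j_0}\not\equiv 0$, $p_{j_0i_0}\equiv 0$, and here your plan has a genuine gap.

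Your parity split places the nonzero off-diagonal entries of $u_{i_0}^{(1)}$ in odd rows and those of $u_{j_0}^{(1)}$ in even rows, and you argue that ``only the ordered product $u_{i_0}u_{j_0}$ is ever needed.'' That is true at the level of the $2$-index \mip s, but it ignores the $3$-index (and higher) ones. Concretely, for $m\ge 5$ take the entry $(2,5)$ of $p(\vec{\mathbf{u}}^{(1)})$: the path $2\to_{j_0}3\to_{i_0}4\to_{j_0}5$ uses $u_{j_0}(2,3)$, $u_{i_0}(3,4)$, $u_{j_0}(4,5)$, all of which your construction sets nonzero (they are needed for $x_{1,3}$, $x_{3,5}$, $x_{3,5}$ respectively), and its coefficient is $p_{j_0i_0j_0}(\vec{\mathbf{a}}_{22},\vec{\mathbf{a}}_{33},\vec{\mathbf{a}}_{44},\vec{\mathbf{a}}_{55})$. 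Nothing in the hypotheses forces $p_{j_0i_0j_0}\equiv 0$; the assumptions $p_{j_0i_0}\equiv p_{i_0i_0}\equiv p_{j_0j_0}\equiv 0$ control only the $2$-index level. Thus row $2$ of $p(\vec{\mathbf{u}}^{(1)})$ need not vanish, so $p(\vec{\mathbf{u}}^{(1)})\neq X_1$. Your $\beta$-corrections live only in the odd rows of $u_{i_0}$ and the even rows of $u_{j_0}$, so they cannot cancel this contribution.

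The paper sidesteps this by \emph{not} splitting $X$ at all. Instead it shows that every $X\in T_m(K)^{(1)}$ with all relevant entries nonzero lies in $p(T_m(K))$ as a single image: take $u_{j_0}$ with every strictly upper-triangular entry equal to $1$, and then solve for the entries $u_{i_0}(l,l+s)$ recursively in increasing $s$. The point is that the target $x_{l-1,l+s}$ receives a leading contribution $p_{i_0j_0}(\vec{\mathbf{a}}_{l-1,l-1},\vec{\mathbf{a}}_{l,l},\vec{\mathbf{a}}_{l+s,l+s})\,u_{i_0}(l-1,l)\,u_{j_0}(l,l+s)$ from the path $l-1\to_{i_0}l\to_{j_0}l+s$; with $u_{j_0}(l,l+s)=1$ and $u_{i_0}(l-1,l)\ne 0$ already fixed, one solves for $u_{i_0}(l,l+s)$ at the next stage. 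Because one is now hitting \emph{every} entry of $X$ (not forcing some rows to be zero), the higher-index cross terms are simply absorbed into the $\beta$-corrections for those entries. The statement $p(T_m(K))+p(T_m(K))=T_m(K)^{(1)}$ then follows since any $X$ is a sum of two matrices with all entries on and above the second superdiagonal nonzero.
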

\begin{proof}
Assume $X= \left(x_{ab}\right) \in T_m(K)^{(1)}$ satisfies $x_{ab}\neq 0$ for all $b-a\ge 2$. We want to show that $X\in p\left(T_M(K)\right)$. Continuing with the notation of this section, we see that $I_p=1$ or $2$. If $I_p=1$, then the result follows from \autoref{prop:Waring_all_i_same}. 
If $I_p=2$ and $p_{i_1i_2}p_{i_2i_1}\neq 0$, then the result follows from \autoref{prop:Waring_all_sigma_nonzero}. 
Hence we are left with the case when either $p_{i_0j_0}$ or $p_{j_0i_0}$ is the zero function. Without loss of generality, we assume that $p_{i_0j_0}$ to be nonzero. We can assume that $p_{i_0i_0}$ and $p_{j_0j_0}$ are the zero polynomials. Using \autoref{prop:ifOneNonzeroThenManynonzero}, we assume that 
\[
    \lambda_{\sigma(1),\sigma(2)} = p_{i_0j_0}\left(\avec{\sigma(1),\sigma(2)}\right) \neq 0,~\text{for all } \sigma \in S_m.
\]
We choose the matrix $u_{i_0}$ as follows. For $l \leq m-1$ and $1\leq s\leq m-l$, choose the entries to be
\begin{align*}
    u_{i_0}(1,2) & = u_{i_0}(2,3) = \cdots = u_{i_0}(l-1,l) = 1\\
    u_{i_0}(l,l+1) & = \left( \lambda_{l-1,l+1}\cdot u_{i_0}({l-1,l}) \right) ^{-1} \left[ x_{l-1,l+1} \right] 
    \\
    u_{i_0}(l,l+2) & = \left( \lambda _{l-1,l+2} \cdot u_{i_0}({l-1,l}) \right) ^{-1} \left[ x_{l-1,l+2} - \beta _{l-1,l+2} \right] 
    \\
    \vdots & \kern 2cm \vdots \kern 2cm \vdots \kern 4cm \vdots
    \\
    u_{i_0}(l,l+s) & = \left(\lambda _{l-1,l+s}\cdot u_{i_0}({l-1,l})\right) ^{-1} .\left[ x_{l-1,l+s} - \beta _{l-1,l+s} \right],
\end{align*} 
for suitable choices of $\beta$'s. Further take $u_{j_0}$ to be the matrix with $(i,i)$ diagonal entries to be $\vec{a}_{ii}(j_0)$ and all others above the diagonal entries to be $1$. For all $i\neq i_0,j_0$ we take \[u_i=\diag\left(\Vec{a}_{11}(i),\Vec{a}_{22}(i),\ldots,\Vec{a}_{mm}(i)\right).\] Then with the prescribed choices, we have that $p\left(u_1,u_2,\ldots,u_n\right)=X$. Now the result is immediate.
\end{proof}
\color{black}
\hf Accumulating \autoref{lem:ZariskiDense}, \autoref{thm:order_m-1}, \autoref{thm:order_1}, \autoref{thm:order_t}, \autoref{eg:counterExample}, \autoref{prop:Waring_all_i_same} and \autoref{prop:Waring-ord=t}, we now state the following theorem which follows from the mentioned results.

\begin{thm}\label{thm:mainThm}
    Let $K$ be an algebraically closed field, $p\in K\left[ F_n \right] $ be a non-commutative polynomial such that $\ord=t$ such that the following results hold: 
    \begin{enumerate}
        \item If $t=0$, then $p\left( T_m(K) \right) $ is Zariski dense in $T_m(K)$.
        \item If $t=1$, then $p\left( T_m(K) \right) = T_m(K)^{(0)}$.
        \item If $1<t<m-1$, then $p\left( T_m(K) \right) \subset T_m(K)^{(t-1)}$, and equality might not hold in general. Furthermore, for every $m$ and $t$ there exists $d$ such that each element of $T_m(K)^{(t-1)}$  can be written as $d$ many elements from $p\left( T_m(K) \right) $. 
        \item If $t=m-1$, then $p\left( T_m(K) \right) = T_m(K)^{(m-2)}$.
        \item For $t\geq m$, $p\left( T_m(K) \right) = \{0\}$.  
    \end{enumerate}
\end{thm}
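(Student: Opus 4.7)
The proof of this theorem is essentially an aggregation of results established earlier in the paper, so my plan is to match each of the five cases with the corresponding proposition and explain how they interlock. The key observation is that each case is pinned down by a distinct earlier result, and the only real labour is verifying that the hypotheses align.

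First I would dispatch the trivial bookend cases. Case (5) follows directly from the definition of order: if $\ord\geq m$, then $p\in \mathrm{Id}(T_m(K))$ by the monotonicity of the chain $\mathrm{Id}(T_2(K))\supseteq \mathrm{Id}(T_3(K))\supseteq\cdots$, hence $p(T_m(K))=\{0\}$. Case (1) is immediate from Proposition \ref{lem:ZariskiDense}, which precisely says that $p(T_m(K))$ is Zariski dense whenever $p(K)\neq 0$, i.e., $\ord=0$. Cases (2) and (4) are likewise immediate: apply Proposition \ref{thm:order_1} for $t=1$ and Proposition \ref{thm:order_m-1} for $t=m-1$.

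The substantive case is (3), where $1<t<m-1$. The inclusion $p(T_m(K))\subseteq T_m(K)^{(t-1)}$ comes from Proposition \ref{thm:order_t}, which uses that all \mip s of multi-index size at most $t-1$ vanish identically when $\ord=t$. The fact that this containment can be strict is witnessed by Example \ref{eg:counterExample} with $m=5$, $t=2$, and $p=(x_1x_2-x_2x_1)^2$. For the Waring-type statement, one splits into two subcases according to the invariant $I_p=\min\{|\{i_1,\ldots,i_t\}| : p_{i_1\ldots i_t}\not\equiv 0\}$. If $I_p=1$ one applies Proposition \ref{prop:Waring_all_i_same} with $d=2$; otherwise one applies Proposition \ref{prop:Waring-ord=t} to get $d=\min\{t,m-t\}$. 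In either case one obtains a uniform $d=d(m,t)$ with $\sum_{j=1}^d p(T_m(K))=T_m(K)^{(t-1)}$, which is exactly the Waring conclusion claimed.

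I do not expect any obstacle in this proof at all, since the statement is literally the union of the conclusions of the referenced propositions; the only small point of care is to observe that cases (1)--(5) exhaust every possible value of $\ord\in\{0,1,\ldots,m-1\}\cup\{t\geq m\}$ and are mutually exclusive, so the five clauses can be asserted independently without interference. The write-up therefore reduces to a short case analysis on $t$ with a one-line citation per case, together with the reminder to the reader of Example \ref{eg:counterExample} for the strictness assertion in (3).
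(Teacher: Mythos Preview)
Your proposal is correct and matches the paper's approach exactly: the paper presents this theorem as an immediate accumulation of \autoref{lem:ZariskiDense}, \autoref{thm:order_m-1}, \autoref{thm:order_1}, \autoref{thm:order_t}, \autoref{eg:counterExample}, \autoref{prop:Waring_all_i_same}, and \autoref{prop:Waring-ord=t}, with no further argument given. Your case-by-case matching of these results to clauses (1)--(5), including the $I_p=1$ versus $I_p\geq 2$ split for the Waring part of (3), is precisely the intended reading.
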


\hf We believe that it is true that for all $m\geq 5$ and $2\leq \ord\leq m-2$,  the value of $d$ is $2$ and hence we propose the following conjecture.
\begin{conjecture}
    Let $p\in K[F_n]$ be a polynomial in non-commutative variables. Suppose $\ord = t$ where $1< t< m-1$. Then $p \left(\tk[m]\right) +  p \left(\tk[m]\right) = T_m(K)^{(t-1)}$.
\end{conjecture}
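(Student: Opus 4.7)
The plan is to upgrade Proposition~\ref{prop:Waring-ord=t} from $d=\min\{t,m-t\}$ down to $d=2$ for all intermediate orders. The argument decomposes into a soft ``genericity'' reduction followed by an explicit construction that covers all generic matrices in a single polynomial evaluation.

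\emph{Reduction to generic matrices.} Let
\[
S \;=\; \bigl\{\,X=(x_{ij})\in T_m(K)^{(t-1)} : x_{ij}\neq 0 \text{ for every } j-i\ge t\,\bigr\}.
\]
I claim it suffices to prove $S\subseteq p(T_m(K))$. Given arbitrary $X\in T_m(K)^{(t-1)}$, let $Y_c\in T_m(K)^{(t-1)}$ have every entry in position $(i,j)$ with $j-i\ge t$ equal to $c$ and all other strictly upper-triangular entries zero. For $c\in K$ avoiding the finite set $\{0\}\cup\{-x_{ij}:j-i\ge t\}$, both $X+Y_c$ and $-Y_c$ lie in $S$, hence in $p(T_m(K))$, so $X=(X+Y_c)+(-Y_c)\in p(T_m(K))+p(T_m(K))$. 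This is the trick implicit in the $t=2$ proposition immediately preceding the conjecture.

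\emph{Explicit construction.} Fix $X\in S$ and choose $(i_1,\ldots,i_t)$ with $p_{i_1\ldots i_t}$ not identically zero; by Proposition~\ref{prop:ifOneNonzeroThenManynonzero}, pick vectors $\mathbf{a}_{\alpha\alpha}\in K^n$ so that
\[
\lambda_{\alpha_1\ldots\alpha_{t+1}}:=p_{i_1\ldots i_t}(\mathbf{a}_{\alpha_1\alpha_1},\ldots,\mathbf{a}_{\alpha_{t+1}\alpha_{t+1}})\neq 0
\]
for every strictly increasing tuple $(\alpha_1,\ldots,\alpha_{t+1})$. Set $u_i=\diag(\mathbf{a}_{11}(i),\ldots,\mathbf{a}_{mm}(i))$ for $i\notin J:=\{i_1,\ldots,i_t\}$. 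For $i\in J$, determine the strictly upper-triangular entries of $u_i$ inductively on $r=t,t+1,\ldots,m-1$: at position $(\alpha,\alpha+r)$ solve for one designated free entry in the chain $u_{i_1}(\alpha,\alpha+1)u_{i_2}(\alpha+1,\alpha+2)\cdots u_{i_t}(\alpha+t-1,\alpha+t)\cdot(\text{trailing factors})$ so as to match $x_{\alpha,\alpha+r}$ after subtracting the contribution $\beta_{\alpha,\alpha+r}$ of the entries fixed at previous steps. Since the coefficient of the free entry is a nonzero multiple of some $\lambda_{\cdots}$, each linear equation is solvable. This pattern generalizes the algorithms executed for $t=2,3,4$ in the worked examples.

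\emph{Main obstacle.} The delicate case is $1<I_p<t$, where $(i_1,\ldots,i_t)$ has repetitions but lacks the cyclic symmetry used in Proposition~\ref{prop:Waring_all_sigma_nonzero}. Here a single $u_{i_k}$ must carry nonzero entries in several distinct rows simultaneously, and monomials of $p$ corresponding to permutations of $(i_1,\ldots,i_t)$ whose multi-index polynomial happens to vanish produce ``ghost'' cross-terms in unintended positions of $p(\vec{u})$ that must be cancelled by the remaining free parameters. The technical heart of the conjecture is to verify that the resulting linear system in the free entries of $\{u_i:i\in J\}$ remains triangular in the correct order and nowhere-degenerate for generic $X\in S$, so that the recipe above is never overconstrained. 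For $t=2$ this is carried out explicitly in the final proposition of the preceding subsection; pushing the same bookkeeping through for arbitrary $t$ is where I expect the real work to lie.
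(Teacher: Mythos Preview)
The statement you are attempting to prove is posed in the paper as an open \emph{conjecture}; the paper supplies no proof, only the partial results of Propositions~\ref{prop:Waring_all_i_same}, \ref{prop:Waring_all_sigma_nonzero}, \ref{prop:Waring-ord=t} and the special case $t=2$. There is therefore nothing to compare your argument against, and your proposal should be read as an attack on an open problem rather than a reconstruction of a known proof.

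Your reduction step is correct and is exactly the mechanism the paper uses to pass from ``generic $X$ lies in the image'' to ``every $X$ is a sum of two images''. The difficulty is entirely in the second step, and here your proposal is not a proof but a programme --- as you yourself acknowledge in the final paragraph. Two concrete issues:
\begin{itemize}
\item Even in the case $I_p=t$ with all $i_j$ distinct, your scheme places a nonzero superdiagonal entry $u_{i_j}(\alpha+j-1,\alpha+j)$ for every $\alpha$. Then the weight-$t$ contribution to the $(\alpha,\alpha+t)$ entry of $p(\vec{\mathbf u})$ picks up chains $u_{j_1}(\alpha,\alpha+1)\cdots u_{j_t}(\alpha+t-1,\alpha+t)$ for many tuples $(j_1,\ldots,j_t)$, not just the identity ordering; whenever $p_{j_1\ldots j_t}\not\equiv 0$ for one of these, the corresponding term couples the entries you are trying to solve for at level $r=t$ to one another. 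Triangularity in $r$ alone is not enough; you need triangularity within level $r=t$ as well, and your sketch does not establish this.
\item In the repeated-index regime $1<I_p<t$ the same matrix $u_{i_k}$ must carry entries in several rows simultaneously, and the interference between chains worsens. You correctly name this as the ``main obstacle'' and leave it open.
\end{itemize}
The paper's route around these couplings in Proposition~\ref{prop:Waring-ord=t} is to split $X$ by residue classes of the row index $\bmod\ t$ and treat each class with a separate evaluation, which decouples the chains at the cost of $d=\min\{t,m-t\}$ summands. Collapsing this to $d=2$ for arbitrary $t$ and arbitrary $I_p$ is precisely the content of the conjecture, and remains unresolved in the paper.
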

\section{Image of word maps on \texorpdfstring{$\tk[m]^\times$}{TmK}}\label{sec:groupMap}

Let $G$ be an algebraic group. For $k\ge 2$ the map $P_k:G\to G$ defined as $P_k(g)=g^k$ is known as the \textit{$k^{\text{th}}$-power map}. This section is concerned with the question when such a map has a dense image. A well known result of A. Borel states that the image of a word map (in particular a power map) on a semisimple algebraic group $G$ is Zariski dense, see  \cite{Bo83}. Note that the image of the power map is not always dense for real points of an algebraic group in real topology. For example, in case of $SL_2(\mathbb{R})$ the image of $P_2$ is not dense in real topology, but its image is Zariski dense in $SL_2(\mathbb{R})$. We present similar kind of result concerning Zariski density of the image of a word map on Borel subgroups of $GL_n(K)$ which are by definition maximal connected solvable subgroup. We will present a general result concerning any element $w\in F_n$.

\begin{thm}\label{thm:groupMap}
    Let $w\in F_n$ be a word which is not a group law. Then $w \left(\tk[m]^\times\right)$ is dense in $\tk[m]^\times$. Furthermore, for $B$ a Borel subgroup of $GL_{m}(K)$  we have that $w \left(B\right)$ is Zariski dense in $B$.
\end{thm}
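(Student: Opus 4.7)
The plan is to adapt the strategy of \autoref{lem:ZariskiDense} to the multiplicative setting, replacing scalar evaluations on $K$ with evaluations on the torus $K^\times$. As a preliminary step, I would prove a multiplicative analogue of \autoref{lem:equivalence}: every element $g \in T_m(K)^\times$ whose diagonal entries are pairwise distinct is $T_m(K)^\times$-conjugate to $\diag(g_{11},\ldots,g_{mm})$. The argument mirrors the additive case: for the $(i,j)$-entry with $j-i$ minimal among nonzero off-diagonal positions, choose $t \in K$ with $t(g_{ii} - g_{jj})$ equal to that entry (the denominator is nonzero by hypothesis), and conjugate by $I + t E_{ij}$; then iterate on larger differences $j-i$.

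The second ingredient is that word maps commute with inner conjugation in $T_m(K)^\times$, i.e.\ for $v \in T_m(K)^\times$,
\begin{displaymath}
    v \cdot w(g_1, \ldots, g_n) \cdot v^{-1} = w(v g_1 v^{-1}, \ldots, v g_n v^{-1}).
\end{displaymath}
Combined with the previous step, this reduces the density claim to exhibiting every diagonal matrix in $T_m(K)^\times$ with pairwise distinct diagonal entries as a value of $w$ on $T_m(K)^\times$. The natural attempt is to evaluate $w$ on a tuple of diagonal matrices $g_i = \diag(b_{1i}, \ldots, b_{mi})$, in which case $w(g_1,\ldots,g_n)$ is the diagonal matrix whose $j$-th entry is $w(b_{j1},\ldots,b_{jn})$ computed in $K^\times$. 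Thus the problem reduces to the surjectivity of the word map on the torus $K^\times$: since $K$ is algebraically closed and therefore $K^\times$ is a divisible abelian group, any word map on $K^\times$ that is not identically $1$ is surjective.

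Once the diagonal case is in hand, Zariski density follows exactly as in \autoref{lem:ZariskiDense}: the set $\mathcal{D}^\times = \{g \in T_m(K)^\times : g_{ii} \neq g_{jj} \text{ for } i \neq j\}$ is the complement in $T_m(K)^\times \cong (K^\times)^m \times K^{\binom{m}{2}}$ of the vanishing locus of the polynomials $g_{ii} - g_{jj}$, hence Zariski open, and the irreducibility of the ambient variety together with the infinitude of $K$ (which the proof of \autoref{lem:ZariskiDense} uses to perturb coinciding diagonal entries one at a time) gives density. The Borel statement then follows immediately: every Borel subgroup $B$ of $GL_m(K)$ is $GL_m(K)$-conjugate to $T_m(K)^\times$, and conjugation is an isomorphism of algebraic varieties that intertwines the word map.

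The main obstacle is the diagonal reduction step. Evaluating $w$ on the torus gives a well-defined map from $(K^\times)^n$ to $K^\times$, but the hypothesis that $w$ is not a group law of $T_m(K)^\times$ controls $w$ on the full group, not directly on the diagonal torus; one must leverage this hypothesis via the structure of $T_m(K)^\times$ as the semidirect product of its diagonal torus $D$ with its unipotent radical $U$. The key technical point is that the $D$-valued component of $w(g_1,\ldots,g_n)$ is controlled purely by the abelianization data of $w$, while the $U$-component carries the finer commutator information; the argument must match these two pieces of data against the target diagonal matrix, combining the perturbation technique of \autoref{lem:ZariskiDense} with the entry-by-entry construction used in \autoref{thm:order_1} applied to the unipotent part, so that a generic target in $\mathcal{D}^\times$ is always attained.
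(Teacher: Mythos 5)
Your proposal follows the same route as the paper's proof: reduce to matrices with pairwise distinct diagonal entries by conjugation (your multiplicative analogue of \autoref{lem:equivalence} is not really new — the conjugating matrices $I+tE_{ij}$ used there are already invertible upper triangular, and the paper simply invokes \autoref{lem:diagonal}), use equivariance of word values under conjugation, realize an arbitrary diagonal target entrywise by evaluating $w$ on tuples of diagonal matrices, prove that the distinct-diagonal locus $\mathcal{D}$ is Zariski dense in $T_m(K)^\times$ (the paper redoes the perturbation argument of \autoref{lem:ZariskiDense} with the extra requirement $a_{ii}'\neq 0$; your ``open subset of an irreducible variety'' phrasing is an equivalent justification), and obtain the Borel statement from conjugacy of Borel subgroups. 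Up to your third paragraph, your argument and the paper's proof coincide in substance.

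Where you diverge is the final paragraph, and there the proposed repair cannot work. You correctly observe that ``not a group law'' of the full group $T_m(K)^\times$ does not control $w$ on the diagonal torus, but no matching of $D$- and $U$-components will close this gap: if $w\in[F_n,F_n]$ (all exponent sums zero), then the diagonal projection $T_m(K)^\times\to (K^\times)^m$, being a group homomorphism onto an abelian group, sends every value of $w$ to the identity, so $w\left(T_m(K)^\times\right)$ lies in the proper Zariski-closed unipotent subgroup and is not dense — and the commutator word shows this can happen while $w$ fails to be a law of $T_m(K)^\times$ for $m\geq 2$. The only viable reading of the hypothesis is the one your third paragraph actually uses: $w$ must be nontrivial on $K^\times$, i.e.\ have a nonzero exponent sum in some variable, after which divisibility of $K^\times$ gives surjectivity on each diagonal entry. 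This is exactly the assumption the paper makes tacitly when it cites \autoref{lem:diagonal} (proved for polynomials with $p(K)\neq 0$) for the word $w$. With that reading, your first three paragraphs already constitute a complete proof matching the paper's; the semidirect-product ``matching'' step should simply be dropped rather than developed.
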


\noindent The proof is similar to the proof of \autoref{lem:ZariskiDense}. For the completeness we are writing the proof.

\begin{proof}[Proof of \autoref{thm:groupMap}]
    Note that $T_m(K)^\times$ can be viewed as $\mathcal{K}=\left(K^\times\right)^{m}\times K^{\frac{m(m-1)}{2}}$ with the subspace topology.
    By \autoref{lem:diagonal}, we have that
    \begin{align*}
        \D=\left\{A=(a_{\alpha\beta})\in T_m(K)^\times:a_{\delta\delta}\neq a_{\sigma\sigma}\text{ for all }1\leq\delta\neq\sigma\leq m\right\}\subseteq w\left(T_m(K)^\times\right).
    \end{align*}
    Hence, it is enough to prove that $\D$ is dense in $T_m(K)^\times$. 

    Take a point $P=\left(a_{11}, a_{22}, \ldots, a_{mm},a_{12}, a_{23},\ldots,a_{m-1m},\ldots, a_{1m}\right)\in \mathcal{K}$ and an open set $U$ containing $P$. Hence, there exists an ideal $I$ of $K\left[y_1,\ldots,y_{\frac{m(m+1)}{2} }\right]$ such that $U=\left(K^{\frac{m(m+1)}{2}}\setminus Z(I)\right) \cap \mathcal{K}= \mathcal{K}\setminus Z(I)$, where $Z(I)$ denotes the zero set of $I$. This implies there exists $f\in I$ with $f(P)\neq 0$ that is
    \begin{displaymath}
        f \left(a_{11}, a_{22}, \ldots, a_{mm},a_{12}, a_{23},\ldots,a_{m-1m},\ldots, a_{1m}\right)\neq 0.
    \end{displaymath} 
    If $a_{ii} \neq a_{j j}$ for all $i\neq j$, then $P\in \mathcal{D}$. So we will assume that there exists some $i_0\neq j_0$ but $a_{i_0i_0}=a_{j_0j_0}$. Without loss of generality assume that $i_0=1$ and $j_0=2$. Set $a_{11}' = a_{11}$. We want to find $a_{22}'\notin \left\{a_{22},0\right\}$ such that $f \left(a_{11}', a_{22}', \ldots, a_{mm},a_{12}, a_{23},\ldots,a_{m-1m},\ldots, a_{1m}\right)\neq 0$. Define 
    \begin{displaymath}
        h_{22}(x) = f \left(a_{11}', x, \ldots, a_{mm},a_{12}, a_{23},\ldots,a_{m-1m},\ldots, a_{1m}\right).
    \end{displaymath} 
    Note that the function is not identically zero. As $K$ is an algebraically closed field (hence infinite), there exists $a_{22}'\notin \left\{a_{11}',0\right\}$  such that $h_{22}\left(a_{22}'\right)\neq 0$. 

    Since $K$ is infinite, this procedure can be continued to obtain an element 
    \begin{displaymath}
        P'=\left(a_{11}',a_{22}',\ldots,a_{mm}',a_{12},\ldots,a_{1m}\right)
    \end{displaymath} 
    such that $a_{ii}'\neq a_{j j}'\neq 0$ for all $i\neq j$  and $f \left(P'\right)\neq 0$. Hence, $P'\in \mathcal{D}$. 

    The later part of the theorem follows from the fact that all Borel subgroups of $GL_m(K)$ are conjugate.
\end{proof}

\noindent\textbf{Acknowledgements:} We are immensely thankful to the reviewers for reading the paper very carefully and pointing out the flaws therein (in particular, about the choices of matrices in \autoref{thm:mainThmFor_n=4} and \autoref{thm:order_t}). We are extremely thankful to the editors of the Journal of Algebra, in particular, Prof. Gunter Malle and Prof. Louis Rowen for their help during the communication of this paper.

The first author (Panja) acknowledges the support of NBHM PhD fellowship. The second author (Prasad) was supported by UGC (NET)-JRF fellowship. We would also like to thank Dr. Aritra Bhowmick from IISER Kolkata and Dr. Jyoti Dasgupta from IISER Pune for their helpful discussion. Thanks are due to the Department of Mathematics and Statistics IISER Kolkata for organizing the workshop IINMM-2022, as we started working on this problem during this workshop. We are thankful to Dr. Somnath basu (IISER Kolkata), Prof. Anupam Kumar Singh (IISER Pune), and Prof. Manoj Kumar Yadav (HRI, Pryagraj), for their valuable suggestions.

\bibliographystyle{siam}

\end{document}